\documentclass[10pt]{amsart}

\usepackage{amssymb,amsmath,euscript}
\usepackage{mathtools}
\usepackage{breqn}
\usepackage{tikz}
\usepackage{tkz-berge}
\usepackage{enumitem}
\usepackage{comment}

\newtheorem{theorem}{Theorem}[section]
\newtheorem{lemma}[theorem]{Lemma}

\newtheorem{proposition}[theorem]{Proposition}

\newtheorem{corollary}[theorem]{Corollary}

\theoremstyle{definition}
\newtheorem{definition}[theorem]{Definition}
\newtheorem{example}[theorem]{Example}

\theoremstyle{remark}
\newtheorem{remark}[theorem]{Remark}
\def\Fq{{\mathbb F}_q}

\def\F{{\mathcal F}}

\def\I{{\mathcal I}}

\def\M{{\mathcal M}}

\newcommand{\kI}{\mathcal{I}}
\newcommand{\kS}{\mathcal{S}}
\newcommand{\kB}{\mathcal{B}}
\newcommand{\kL}{\mathcal{L}}

\newcommand{\cl}{\mathop{\text{cl}}}

\def\LBr1m {\lambda_B(r-1, m)}

\begin{document}
 
	\title{A $q$-analogue of $\Delta$-matroids and related concepts}
	
	\author{Michela Ceria}
\address{Department of Mechanics, Mathematics and Management (DMMM),
\newline \indent
Politecnico di Bari, Italy.}

\email{michela.ceria@poliba.it}
\author{Trygve Johnsen}
\address{Department Mathematics and Statistics,\newline \indent
	UiT: The Arctic University of Norway,\newline \indent
	Hansine Hansens veg 18, N-9019 Tromsø, Norway.}
	\email{Trygve.Johnsen@uit.no}
	\author{Relinde Jurrius}
\address{Faculty of Military Sciences
\newline \indent
Netherlands Defence Academy, the Netherlands}

\email{RPMJ.Jurrius@mindef.nl}	\date{\today}
	
\begin{abstract}
	We define and study $q$-$\Delta$-matroids, and $q$-$g$-matroids. These objects are  analogues, for finite-dimensional vector spaces over finite fields, of  $\Delta$-matroids and $g$-matroids arising from finite sets. We compare axiomatic descriptions with definitions by means of strong maps of $q$-matroids. \\
 \textbf{MSC:} 05B35, 05A30, 11T71

\end{abstract}

\maketitle
\section{Introduction} \label{intro}
Matroids are combinatorial objects that in an axiomatic way enable one to study linear independence in a general way. Nevertheless, or precisely because of this generality,  matroids are used in concrete applications in a multitude of ways, for graphs, error-correcting codes, toric geometry, the study of greedy algorithms, and others.

$\Delta$-matroids are a related concept, developed to supplement matroids, in various applications.
A $\Delta$-matroid is a pair $(E,\mathcal{F})$, where $E=\{1,\cdots,n\},$ and $\mathcal{F}$ is a  non-empty family of subsets of  $E$, such that:

\begin{quote}
For every two sets $X$ and $Y$ in $\mathcal{F}$, and for every element $x$ in their symmetric difference $ X \bigtriangleup Y= (X-Y) \cup (Y-X)$, there exists a $ y\in X\bigtriangleup Y$ such that $ X \bigtriangleup \{x,y\}$ is in the family.
\end{quote}

The non-empty family $\mathcal{F}$ is called the family, or set, of \emph{feasible sets} of the $\Delta$-matroid.

In recent years a need for inventing a $q$-analogue of matroids has arisen, in particular in connection with rank-metric codes. Just like ``usual'' matroids are useful tools for determining properties of linear error-correcting codes with Hamming metric, $q$-matroids can be used to study similar properties of vector rank-metric codes (by some authors called Gabidulin rank-metric codes).

Another arena, where (usual) matroids play an important, but sometimes insufficient role for certain purposes, is the study of graphs embedded in surfaces, or the study of (compact Riemann) surfaces by means of putting graphs or similar objects into them. This  has led to generalizations, not only of graphs, but also of the (cycle) matroids associated to those (sometimes planar) graphs. One has generalized graphs to ribbon graphs, and as a parallel, one has generalized matroids, not to $q$-matroids, but to $\Delta$-matroids, as defined above. As an illustration: If a graph is planar, then its geometrical dual graph (where regions and vertices are interchanged compared to the ``old'' graph) has a cycle matroid which is the dual of the cycle matroid of the ``old'' graph.
The surface we are working on is essentially the compact Riemann surface $S^2$ (when adding a ``compactification point'' to the infinite planar region). If we have a graph embedded in a compact Riemann surface with $g $ holes, for $g \ge 1$, this matching of geometrical and matroidal dualities no longer holds. Therefore, to remedy these difficulties, one invents ribbon graphs and $\Delta$-matroids, and then a new matching of dualities appears.
See for example \cite{CMNR} for an overview.

$\Delta$-matroids are also used in other areas, like constraint satisfaction problems (CSPs), equivariant localization, and tropical geometry. See \cite{FF}, \cite{ELS},  and \cite{R}, respectively.

One should note that the concept of $\Delta$-matroids was introduced by Bouchet in \cite{B3} and treated for example in \cite{B1} and \cite{B2}. See 
\cite[Section 1, Line 8]{B2}, and \cite[Theorem 4.1]{B2} for an identification of a graph-theoretical problem, and a solution to restore the harmony on a different level, respectively.

In this article we investigate whether it is possible to find some kind of natural $q$-analogue of $\Delta$-matroids, in other words a generalization from $q$-matroids to $q$-$\Delta$-matroids, or if one prefers: A superposition of the two changes made from matroids to $q$-matroids, and to $\Delta$-matroids. Below, in Proposition \ref{basic}, we have listed some well-known fundamental properties of $\Delta$-matroids.
In this note we show that $q$-$\Delta$-matroids can be defined such that they satisfy corresponding properties in the $q$-analogue. 

So far we have not found any concrete applications that call for the definition of $q$-$\Delta$-matroids, but we think that it is both fascinating and interesting to investigate how much of the material about $\Delta$-matroids can be given as a $q$-variant.

In Section \ref{three} the formal definition of $q$-$\Delta$ invariants is given in Definition \ref{def}, and mimics the definition of $\Delta$-matroids as defined above, and simultaneously a part of the axioms for $q$-matroids. We also discuss some examples of $q$-$\Delta$-matroids, as well as some problems with the definition of restriction.

In Section \ref{four} we  define $q$-$g$-matroids, which mimic $g$-matroids for sets, and in our case are defined by strong (identity) maps between ground spaces of pairs of $q$-matroids. A main result is that these are $q$-$\Delta$-matroids, as in the analogous situation for sets.
We also show that $q$-$g$-matroids satisfy analogous properties as in Proposition \ref{basic}.

We also introduce the concept of weak $q$-$g$-matroids, and study the inclusion relations that exist between the sets of  $q$-$g$-matroids, weak $q$-$g$-matroids and $q$-$\Delta$-matroids. These issues are direct analogues for spaces, of corresponding issues for sets.

There are some important concepts for $\Delta$-matroids, that we have not been able to mimic properly for $q$-$\Delta$-matroids. One of them is simply the symmetric difference $X \bigtriangleup Y$ for sets. Nowhere in our exposition do we use any $q$-variant of this. 

The second important concept follows in a natural way from this, that is the ``star''-operation $\bigtriangleup \rightarrow \bigtriangleup * A$, where each feasible set $F \in \F$ is replaced by $F \bigtriangleup A,$ for any fixed subset $A \subset E $ to give a new $\Delta$-matroid $\bigtriangleup * A$ on $E$.
This operation is sometimes called \emph{partial duality}, or \emph{twisting}.
The case $A=E$ gives usual duality: Each $F$ is replaced by $E-F$. In the $q$-case this is the only star-operation we have been able to mimic: We then let each feasible space $F$ be replaced by the orthogonal complement  $F^{\perp}$ with respect to the natural bilinear form on $E=\mathbb{F}^n.$ Thus we are able to define the dual of a $q$-$\Delta$-matroid, an important concept for us, that we use throughout.
Moreover we have not been able to  mimic the operations of restriction/deletion and contraction. The problems that arise when making the natural attempts to perform such operations, are explained in Subsection 3.3.

In Section \ref{five} we point out how pairs 
$C_1 \subset C_2$ of vector rank metric codes give rise to 
strong identity maps of $q$-matroids, and also to $q$-$g$-matroids, a perfect analogue to what pairs of error-correcting codes $C_1 \subset C_2$  with  Hamming distance  do. We also discuss possible definitions of representability for $q$-$\Delta$-matroids  and/or $q$-$g$-matroids, and a possible rank function. It is classical that a certain subclass of $\Delta$-matroids can by represented by skew-symmetric matrices. This form of representation goes via the star-operation, for which we have found no analogue for $q$-$\Delta$-matroids. See \cite{B1} and \cite[p. 47]{CMNR}. We suggest pairs of linear codes as a natural way of 
defining representability for $q$-$\Delta$-matroids.

It is unclear to what extent our $q$-analogues of $\Delta$-matroids and $g$-matroids interact with other kind of mathematics in the $q$-complex case. Are there, for example, meaningful definitions of $q$-graph and $q$-surface? Or can even $q$-$\Delta$-matroids be associated to usual graphs and surfaces in ways supplementing those of traditional $\Delta$-matroids? 
We leave this open for further research.

\section{Preliminaries} \label{2}

In this section we list the necessary preliminaries on $\Delta$-matroids, $q$-matroids, and strong maps.

\subsection{$\Delta$-matroids}

Throughout this subsection, we denote by $E$ a finite set, usually taken to be $\{1,\ldots,n\}$. Elements of this set are named by lower case letters like $x,y$ and $e$. We start with the definition of a $\Delta$-matroid.

\begin{definition} \label{firstdef}
A pair $(E,\mathcal{F})$, where $\mathcal{F}$ is a non-empty family of subsets of a finite \emph{ground set} $E$, is a $\Delta$-matroid if, for every two sets $X$ and $Y$ in $\mathcal{F}$, and for every element $x$ in their symmetric difference $ X \bigtriangleup Y= (X-Y) \cup (Y-X)$, there exists a $ y\in X\bigtriangleup Y$ such that $ X \bigtriangleup \{x,y\}$ is in the family. 
The non-empty family $\mathcal{F}$ is called the family, or set, of \emph{feasible sets} of the $\Delta$-matroid.
\end{definition}

We emphasise that the possibility $x=y$ is allowed in the definition of a $\Delta$-matroid. It is straightforward from this definition that the following holds.

\begin{proposition} \label{next}
The pair $\Delta^*=(E,\mathcal{F}^*)$ is  a $\Delta$-matroid,  if $\mathcal{F}^*$ is the family of set-theoretical complements of the sets that are members of $\mathcal{F}$, for a $\Delta$-matroid $\Delta=(E,\mathcal{F})$. The $\Delta$-matroid $\Delta^*=(E,\mathcal{F}^*)$ is called the dual $\Delta$-matroid  of $\Delta=(E,\mathcal{F})$, and vice versa.
\end{proposition}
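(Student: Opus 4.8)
The plan is to verify the $\Delta$-matroid axiom of Definition \ref{firstdef} directly for the pair $(E,\mathcal{F}^*)$, reducing each instance of the axiom to a corresponding instance already guaranteed by the hypothesis that $(E,\mathcal{F})$ is a $\Delta$-matroid. The whole argument rests on two elementary identities for symmetric differences, so essentially no new combinatorial content is needed: complementation is simply ``transported through'' the axiom.

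First I would record the two identities. For any subsets $X,Y\subseteq E$, complementing both arguments leaves the symmetric difference unchanged,
\[
(E\setminus X)\bigtriangleup(E\setminus Y)=X\bigtriangleup Y,
\]
since an element lies in exactly one of $E\setminus X$, $E\setminus Y$ precisely when it lies in exactly one of $X$, $Y$. Second, writing $E\setminus X=E\bigtriangleup X$ and using associativity and commutativity of $\bigtriangleup$, for any $x,y\in E$ one has
\[
(E\setminus X)\bigtriangleup\{x,y\}=(E\bigtriangleup X)\bigtriangleup\{x,y\}=E\bigtriangleup(X\bigtriangleup\{x,y\})=E\setminus(X\bigtriangleup\{x,y\}).
\]

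With these in hand the verification is immediate. Given $X',Y'\in\mathcal{F}^*$, write $X'=E\setminus X$ and $Y'=E\setminus Y$ with $X,Y\in\mathcal{F}$, and let $x\in X'\bigtriangleup Y'$. By the first identity $X'\bigtriangleup Y'=X\bigtriangleup Y$, so $x\in X\bigtriangleup Y$, and the axiom for $(E,\mathcal{F})$ supplies a $y\in X\bigtriangleup Y=X'\bigtriangleup Y'$ with $X\bigtriangleup\{x,y\}\in\mathcal{F}$. By the second identity $X'\bigtriangleup\{x,y\}=E\setminus(X\bigtriangleup\{x,y\})\in\mathcal{F}^*$, which is exactly what the axiom demands for $(E,\mathcal{F}^*)$. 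Non-emptiness of $\mathcal{F}^*$ follows since complementation is a bijection $\mathcal{F}\to\mathcal{F}^*$ and $\mathcal{F}\neq\emptyset$.

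I do not expect a genuine obstacle here; the only thing requiring a little care is the bookkeeping in the two identities---in particular observing that the index set $X\bigtriangleup Y$ over which $y$ is chosen is literally the same set $X'\bigtriangleup Y'$, so the witness $y$ produced for $\mathcal{F}$ automatically lies in the correct symmetric difference for $\mathcal{F}^*$. That $x=y$ is permitted, as emphasised after Definition \ref{firstdef}, causes no difficulty, since the identities hold for one- or two-element sets $\{x,y\}$ alike.
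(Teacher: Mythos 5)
Your proof is correct. The paper does not actually verify the axiom itself: it simply cites \cite[Theorem 2.5]{CMNR}, which is the more general ``twist'' (partial duality) statement that $\Delta * A = (E,\{F\bigtriangleup A : F\in\mathcal{F}\})$ is a $\Delta$-matroid for \emph{any} $A\subseteq E$, with duality being the case $A=E$. Your argument is the self-contained special case: the two identities you isolate, $(E\setminus X)\bigtriangleup(E\setminus Y)=X\bigtriangleup Y$ and $(E\setminus X)\bigtriangleup\{x,y\}=E\setminus(X\bigtriangleup\{x,y\})$, are exactly the instances with $A=E$ of the general facts $(X\bigtriangleup A)\bigtriangleup(Y\bigtriangleup A)=X\bigtriangleup Y$ and $(X\bigtriangleup A)\bigtriangleup\{x,y\}=(X\bigtriangleup\{x,y\})\bigtriangleup A$, so your proof generalizes verbatim to the full twist theorem by replacing $E$ with $A$ throughout. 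What the citation buys the authors is brevity and a pointer to the broader partial-duality framework (which they discuss later precisely because it resists a $q$-analogue); what your direct verification buys is a short, elementary, self-contained argument, and your care in noting that the witness $y$ lives in the literally identical index set $X'\bigtriangleup Y'=X\bigtriangleup Y$, and that $x=y$ causes no trouble, covers the only points where bookkeeping could go wrong.
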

\begin{proof}
For a more general statement, implying this duality, see for example \cite[Theorem 2.5]{CMNR}.
\end{proof}

The following result associates several $\Delta$-matroids to matroids and vice versa.

\begin{proposition} \label{basic}
\begin{enumerate}
\item If $\mathcal{B}$ is the family of bases for a matroid with ground set $E$, then $(E,\mathcal{B})$ is a $\Delta$-matroid.
\item If $\mathcal{I}$ is the family of independent sets for a matroid with ground set $E$, then $(E,\mathcal{I})$ is (also) a $\Delta$-matroid.
\item If $\mathcal{S}$ is the family of spanning sets for a matroid with ground set $E$, then $(E,\mathcal{S})$ is (also) a $\Delta$-matroid.
\item Let $\Delta=(E,\mathcal{F})$ be a $\Delta$-matroid.
Let $\mathcal{F}_U$ be the set of those elements of $\mathcal{F}$ that have the maximum cardinality among the elements of $\mathcal{F}$, and let $\mathcal{F}_L$ be the set of those elements of $\mathcal{F}$ that have the minimum cardinality among the elements of $\mathcal{F}$. Then $\Delta_U=(E,\mathcal{F}_U)$ and $\Delta_L=(E,\mathcal{F}_L)$ are both matroids (with $\mathcal{F}_U, \mathcal{F}_L$ as families of bases, respectively). These matroids are called the \emph{upper} and the \emph{lower} matroid of $\Delta$, respectively.
\end{enumerate}
\end{proposition}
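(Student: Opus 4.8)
The plan is to obtain items (1)--(3) from standard matroid exchange together with Proposition~\ref{next}, and to treat (4) as the substantive part. For (1), recall that bases are equicardinal, so for $X,Y\in\mathcal{B}$ and $x\in X\bigtriangleup Y$ the required $y$ lies on the opposite side of the symmetric difference: if $x\in X\setminus Y$, basis exchange provides $y\in Y\setminus X$ with $(X\setminus\{x\})\cup\{y\}=X\bigtriangleup\{x,y\}\in\mathcal{B}$, and if $x\in Y\setminus X$, the equivalent reverse form of basis exchange provides $y\in X\setminus Y$ with $(X\setminus\{y\})\cup\{x\}=X\bigtriangleup\{x,y\}\in\mathcal{B}$ (both forms hold for matroid bases, being interchanged by matroid duality). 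For (2) I would exploit that $y=x$ is permitted: if $x\in X\setminus Y$ then $X\bigtriangleup\{x\}=X\setminus\{x\}$ is independent as a subset of $X$; if $x\in Y\setminus X$ and $X\cup\{x\}$ is independent then again $y=x$ works, while if $X\cup\{x\}$ is dependent I pass to the fundamental circuit $C\subseteq X\cup\{x\}$ and note that, since $x\in Y$ and $Y$ is independent, $C\not\subseteq Y$; hence some $y\in C\setminus\{x\}\subseteq X\setminus Y$ satisfies that $(X\setminus\{y\})\cup\{x\}=X\bigtriangleup\{x,y\}$ is independent.

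For (3) I would avoid a direct cocircuit computation and use duality. A set is spanning in $M$ if and only if its complement is independent in the dual matroid $M^*$, so the family $\mathcal{S}$ of spanning sets of $M$ is exactly the family of complements of the independent sets of $M^*$. By (2) applied to $M^*$ these independent sets form a $\Delta$-matroid, and by Proposition~\ref{next} the complements of the feasible sets of a $\Delta$-matroid again form a $\Delta$-matroid; hence $(E,\mathcal{S})$ is a $\Delta$-matroid.

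The core of the argument is (4). The difficulty is that the defining axiom only guarantees \emph{some} $y\in X\bigtriangleup Y$ with $X\bigtriangleup\{x,y\}$ feasible, and a priori this $y$ could shrink the set (for instance $y=x$, yielding $X\setminus\{x\}$) rather than perform a genuine exchange. The idea that resolves this is to invoke the axiom in the direction in which the unwanted outcomes would violate extremality. For $\mathcal{F}_U$, set $M=\max_{F\in\mathcal{F}}|F|$; given $B_1,B_2\in\mathcal{F}_U$ and $f\in B_2\setminus B_1$, apply the axiom to the pair $(B_1,B_2)$ with the element $f$. Because $f\notin B_1$, the choices $y=f$ and $y\in(B_2\setminus B_1)\setminus\{f\}$ would produce feasible sets $B_1\cup\{f\}$ and $B_1\cup\{f,y\}$ of sizes $M+1$ and $M+2$, contradicting maximality; so the axiom must return some $e\in B_1\setminus B_2$, and then $B_1\bigtriangleup\{f,e\}=(B_1\setminus\{e\})\cup\{f\}$ has size $M$ and lies in $\mathcal{F}_U$. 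This is exactly the basis-exchange property for $\mathcal{F}_U$.

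The minimum case is symmetric: for $\mathcal{F}_L$ with $m=\min_{F\in\mathcal{F}}|F|$, given $B_1,B_2\in\mathcal{F}_L$ and $e\in B_1\setminus B_2$, I apply the axiom to $(B_1,B_2)$ with $e$; now $e\in B_1$, so $y=e$ and $y\in(B_1\setminus B_2)\setminus\{e\}$ would give feasible sets of sizes $m-1$ and $m-2$, contradicting minimality, and the axiom is forced to return $f\in B_2\setminus B_1$ with $(B_1\setminus\{e\})\cup\{f\}\in\mathcal{F}_L$. Since $\mathcal{F}_U$ and $\mathcal{F}_L$ are nonempty families of equicardinal sets satisfying an exchange axiom, each is the family of bases of a matroid; here I use that the two directions of exchange (deleting an element of $B_1\setminus B_2$ versus inserting an element of $B_2\setminus B_1$) are equivalent characterizations of matroid bases, as seen by passing to the dual, and one may alternatively deduce the minimum case from the maximum one by noting that $\mathcal{F}_L$ of $\Delta$ is the complement family of $\mathcal{F}_U$ of the dual $\Delta$-matroid of Proposition~\ref{next}. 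I expect the only real obstacle to be choosing this direction correctly in (4); once it is set up, extremality forces the exchange with no further work.
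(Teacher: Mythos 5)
Your proof is correct and follows essentially the same route as the paper: (1) via basis exchange on an equicardinal family, (3) via duality from (2) using Proposition~\ref{next}, and (4) by observing that extremality forces the exchange element to lie on the opposite side of the symmetric difference (exactly the paper's parenthetical remark, which you spell out in detail). The only difference is that for (2) you supply a self-contained fundamental-circuit argument where the paper simply cites \cite[Theorem 1.2]{ASS}; your argument is sound.
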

\begin{proof}
For (1) we see that for an equicardinal family the defining condition for a $\Delta$-matroid is just the exchange property of matroids. For (2) we refer to \cite[Corollary 7.3]{B5}. Statement (3) follows from (2) and Proposition \ref{next}, since the spanning sets of a matroid are the complements of the independent sets of the dual matroid.

To prove (4) the equicardinality of the members of $\mathcal{F}_U$ and $\mathcal{F}_L$ is not enough, since the ``intermediate layers'' are not always basis sets for matroids. A finer study of Definition \ref{firstdef} does however reveal that we get just the exchange property for bases of a matroid in these two cases. (For the lower matroid: If $x \in X-Y$, then $y$ has to belong to $Y-X$. For the upper matroid: If $x \in Y-X$, then $y$ has to belong to $X-Y$.)
\end{proof}

We further define the following notions for $\Delta$-matroids, following \cite[Section 3]{IM}.

\begin{definition}
A \emph{loop} of a $\Delta$-matroid $\Delta=(E,\F)$ is an element $e \in E$, such that $e$ is contained in no feasible set of $\Delta$. A \emph{coloop} of a $\Delta$-matroid is a loop of its dual $\Delta$-matroid $(E,\F^*)$. An isthmus of a $\Delta$-matroid is an element $e \in E$ such that $e \in F$ for all $F \in \F$. 
\end{definition}

It follows directly from Proposition \ref{next} that coloops and isthmuses amount to the same; however, we will see that this is not the case in the $q$-analogue. The following two definitions give constructions to make a new $\Delta$-matroid from an existing one.

\begin{definition}\label{deltarestriction}
Let $e \in E$, for a $\Delta$-matroid $\Delta=(E,\F)$. The \emph{restriction} $\Delta|_{E-\{e\}}$ of $\Delta$ to $E-\{e\}$ is the $\Delta$-matroid $(E-\{e\},\F^'),$ where $\F^'$ is the set of all elements of $\F$ that do not contain $e$, if at least one such element exists. If no such element exists, then $e$ is a coloop of $\Delta$, and we let $\F^*$ consist of the set of all $F - \{e\}$ for all $ F \in \F.$
\end{definition}

In both cases the restriction of $\Delta$ to $E-\{e\}$ is also called the \emph{deletion} of $e$ from $\Delta$. 
We will avoid this terminology because it does not carry over well to the $q$-analogue.

\begin{definition}\label{deltacontraction}
Let $e \in E$, for a $\Delta$-matroid $\Delta=(E,\F)$. The \emph{contraction} $\Delta/\{e\}$ of $\{e\}$ form $\Delta$ is the $\Delta$-matroid $(E-\{e\},\F^')$, where $\F^'$ is the set of all elements of $\F$ that contain $e$ with $e$ removed from it, if at least one such element exist. If no such element exists, then $e$ is a loop of $\Delta$, and we let $\F'=\F$.
\end{definition}

It can be checked directly that restriction and contraction give indeed a $\Delta$-matroid. The operations of restriction and contraction are dual operations, that is:

\begin{proposition}
Let $\Delta$ be a $\Delta$-matroid and $e\in E$. Then $(\Delta/e)^*=\Delta^*|_{E-\{e\}}$.
\end{proposition}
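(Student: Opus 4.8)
The plan is to exploit that both sides live on the same ground set $E-\{e\}$ and are already known to be $\Delta$-matroids, so that only the equality of their feasible families remains. Indeed, $(\Delta/e)^*$ is a $\Delta$-matroid by Definition \ref{deltacontraction} together with Proposition \ref{next}, while $\Delta^*|_{E-\{e\}}$ is one by Proposition \ref{next} together with Definition \ref{deltarestriction}; both have ground set $E-\{e\}$. Thus it suffices to show that, as collections of subsets of $E-\{e\}$, the two feasible families coincide. I would prove this by computing each family explicitly.

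The computation splits according to whether $e$ is a loop of $\Delta$, i.e.\ whether some feasible set of $\Delta$ contains $e$. The bridge between the two sides is the trivial equivalence $e\in F \iff e\notin E-F$. This identifies the trigger of the non-degenerate branch of contraction (``some $F\in\F$ contains $e$'') with the trigger of the non-degenerate branch of restriction applied to $\Delta^*$ (``some $E-F\in\F^*$ omits $e$''), and simultaneously identifies ``$e$ is a loop of $\Delta$'' with ``$e$ is an isthmus of $\Delta^*$.'' Hence in each case the same branch of the respective definition fires on both sides, which is what makes the two constructions line up.

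In the first case ($e$ not a loop), the left-hand side $\Delta/e$ has feasible sets $F-\{e\}$ ranging over $F\in\F$ with $e\in F$; dualizing inside $E-\{e\}$ replaces each by $(E-\{e\})-(F-\{e\})$, which simplifies to $E-F$ because $e\in F$. On the right-hand side, $\F^*=\{E-F : F\in\F\}$, and restricting to $E-\{e\}$ retains exactly those $E-F$ that omit $e$, namely those with $e\in F$; these are the very same sets. In the second case ($e$ a loop), $\Delta/e=(E-\{e\},\F)$ and its dual is $\{(E-\{e\})-F : F\in\F\}=\{(E-F)-\{e\} : F\in\F\}$ using $e\notin F$; meanwhile $e$ is an isthmus of $\Delta^*$, so restriction takes its second branch and also yields $\{(E-F)-\{e\} : F\in\F\}$. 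In both cases the families agree, giving $(\Delta/e)^*=\Delta^*|_{E-\{e\}}$.

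The step I expect to be most delicate is the bookkeeping in the degenerate branches: one must keep careful track of whether complements and set differences are taken inside $E$ or inside $E-\{e\}$, and verify that the branch-selection conditions really are exchanged under duality. Concretely, the identities $(E-\{e\})-(F-\{e\})=E-F$ when $e\in F$, and $(E-\{e\})-F=(E-F)-\{e\}$ when $e\notin F$, are the places where a sign-of-membership slip would break the argument, so these are the simplifications to justify with care.
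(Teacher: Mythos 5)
Your proof is correct. The paper states this proposition without any proof at all (it only remarks beforehand that such facts ``can be checked directly''), and your two-case computation --- matching the non-loop branch of contraction with the non-coloop branch of restriction of $\Delta^*$ via the equivalence $e\in F \iff e\notin E-F$, and verifying the set identities $(E-\{e\})-(F-\{e\})=E-F$ for $e\in F$ and $(E-\{e\})-F=(E-F)-\{e\}$ for $e\notin F$ --- is precisely the direct verification the authors had in mind, carried out with the branch bookkeeping done carefully.
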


\subsection{$q$-matroids}

The word \emph{$q$-matroid} refers to the $q$-analogue of matroids, that is, a generalization that replaces finite sets with \emph{finite dimensional vector spaces}. From now on, we will let $E$ denote the finite dimensional vector space $\mathbb{F}^n$ over some field $\mathbb{F}$. (From the context it should be clear when we are talking about a set $E$ or a space $E$.) We will denote subspaces of $E$ by upper case letters and $1$-dimensional subspaces of $E$ by lower case letters, mimicing the elements of a set. We fix a nondegenerate bilinear form $\perp$ on $E$.

$q$-Matroids have been first introduced by Crapo in \cite{crapo1964theory};  they gained importance due to the link to rank metric codes and network coding and we can find them reintroduced in \cite{JP18}.
\begin{remark}
The $q$-matroids described here, and defined in Definition \ref{rango} below, are not the same  as the objects called $q$-matroids by Bouchet in \cite[p.662]{B4}.   
\end{remark}
Let us start defining a $q$-matroid using the $q$-analogue of the rank function in classical matroid theory.

\begin{definition}\label{rango}
A $q$-matroid $M$ is a pair $(E,r)$ where $E$ is a finite dimensional vector space and $r$ is an integer-valued function defined on the subspaces of $E$ with the following properties:
\begin{itemize}
\item[(R1)] For every subspace $A$ of $E$, $0\leq r(A) \leq \dim A$. 
\item[(R2)] For all subspaces $A\subseteq B$  of $E$, $r(A)\leq r(B)$. 
\item[(R3)] For all subspaces $A,B$  of $E$, $r(A+ B)+r(A\cap B)\leq r(A)+r(B)$.  
\end{itemize}
The function $r$ is called the \emph{rank function} of the $q$-matroid. 
\end{definition}

As one can see, this is a straightforward $q$-analogue of the classical definition: simply size of a set becomes dimension of a vector subspace and the union of sets become the sum of vector subspaces.

Let us see -- similarly to the classical case of matroids -- some other objects that we can use to define a $q$-matroid.

\begin{definition}
Let $M=(E,r)$ be a $q$-matroid. A subspace $A \leq E$ is  an \emph{independent} space of $M$ if  its rank equals its dimension.
A maximal independent subspace  with respect to inclusion is called a \emph{basis} for $M$.
On the other hand, a subspace that is not an independent space of $M$ is  a \emph{dependent space} of $M$.\\
Given a subspace $C \leq E$, we call it a \emph{circuit} if it is dependent, but all proper subspaces of $C$ are independent.\\
We call a subspace $S$ a \emph{spanning space} of $M$   if  $r(S)=r(E)$.
A subspace $A$ of a $q$-matroid $(E,r)$ is called a \emph{flat} if for all $1$-dimensional subspaces $x$ of $E$ such that $x\nsubseteq A$ it holds $r(A+x)>r(A)$.
Finally, a maximal proper flat is called  $\emph{hyperplane}$.
\end{definition}
 All the objects listed above in this subsection  define a $q$-matroid via their own axioms systems, which turn out to be equivalent, their equivalence being called {\em cryptomorphism} (see \cite{bcj,CJ}).

Let us study some of these axiom systems, showing that they are not straightforward $q$-analogues of the classical cases.

\begin{definition}\label{indep-axioms}
	Let $\kI $ a family of subspaces of $E$. We define the following \emph{independence axioms}.
	\begin{itemize}
		\item[(I1)] $\I\neq\emptyset$.
		\item[(I2)] For all $I,J \subseteq E$, if $J\in\I$ and $I\subseteq J$, then $I\in\I$.
		\item[(nI3)] For all $I,J\in\kI$ satisfying $\dim I<\dim J$, there exists a codimension $1$ subspace $X\subseteq E$ with $I\subseteq X$, $J\not\subseteq X$ such that $I+x\in\kI$ for all $1$-dimensional $x\subseteq E$, $x\not\subseteq X$.
			\end{itemize}
	If $\kI$ satisfies the three  axioms above we say that $(E,\kI)$ is a collection of \emph{independent spaces}.
\end{definition}

\begin{definition}\label{indep-bases}
Let $\mathcal{B}$ be a family of subspaces of $E$.
We define the following \emph{basis axioms}.
\begin{itemize}
\item[(B1)] $\mathcal{B}\neq\emptyset$
\item[(B2)] For all $B_1,B_2\in\mathcal{B}$, if $B_1\subseteq B_2$, then $B_1=B_2$.
\item[(nB3)]  For all $B_1,B_2\in\kB$, and for each subspace $A$ that has codimension $1$ in $B_1$ there exists $X\subseteq E$ of codimension $1$ in $E$ such that $X \supseteq A$, $X\not \supseteq B_2$ and $A+x \in \mathcal{B}$ for all $1$-dimensional $x\subseteq E$, $x\not\subseteq X$.
\end{itemize}
If $\mathcal{B}$ satisfies the bases axioms above, we say that $(E,\mathcal{B})$ is a collection of \emph{bases}.
\end{definition}

The axiom (nB3), first stated in \cite{CJ}, is stated and used here in a different form. The present form corrects a mistake of \cite{CJ}, see also its Corrigendum \cite{CCJ}.

Apart from the case of the uniform matroids $U(0,n)$, whose only basis is the zero space  and $U(n,n)$, whose only basis is the ground space, a $q$-matroid cannot have only one basis.
Indeed, if there is only one basis, the independent spaces are exactly all its subspaces and in this case (nI3) is not satisfied.

The statement of \cite{CJ} would allow the presence of only one basis also in cases different from  $U(0,n)$ and  $U(n,n)$.
With the new statement, even if one takes $B_1=B_2$ many bases are constructed, and some of them are not the same as $B_1$.

\begin{definition}\label{spanning-axioms}
	Let $\kS$ be a family of subspaces of $E$. We define the following \emph{spanning space axioms}.
	\begin{itemize}
		\item[(S1)] $E \in \kS$.
		\item[(S2)] For all subspaces $I,J $ of $E$, if $J\in\kS$ and $J \subseteq I$, then  $I\in\kS$.
		\item[(nS3)] For all $S_1,S_2\in\mathcal{S}$ satisfying $\dim S_2<\dim S_1$, there exists a $1$-dimension subspace $x\subseteq S_1$, $x\not\subseteq S_2$ such that for all codimension-one $X\subseteq E$ with $X\not\supseteq x$ we have $X\cap S_1\in\kS$.
	\end{itemize}
	If $\kS$ satisfies the spanning axioms above, we say that $(E,\kS)$ is a collection of \emph{spanning spaces}.
\end{definition}

\begin{definition}\label{circuit-axioms}
Let $\mathcal{C}$ a family of subspaces of $E$. We
define the following \emph{circuit axioms}.
\begin{itemize}
\item[(C1)] $\{0\}\notin\mathcal{C}$.
\item[(C2)] For all $C_1,C_2\in\mathcal{C}$, if $C_1\subseteq C_2$, then $C_1=C_2$.
\item[(C3)]  For distinct $C_1,C_2 \in \mathcal{C}$ and any subspace $X$ of $E$ of codimension $1$ there is a circuit $C_3 \in \mathcal{C}$ such that $C_3 \subseteq (C_1+C_2)\cap X$.
\end{itemize}
If $\mathcal{C}$ satisfies the circuit axioms (C1)-(C3), we say that $(E,\mathcal{C})$ is a \emph{collection of circuits}.
\end{definition}

It is shown in \cite{bcj,CJ} that collections of independent spaces, bases, spanning spaces, and circuits, all define a $q$-matroid.

The following lemma is a straightforward $q$-analogue of a result for matroids. We include its proof here for completeness.

\begin{lemma} \label{fundcirc}
Let $M=(E,r)$ be a $q$-matroid and let $X$ be an independent space of $M$. Let $A\subseteq E$ such that $X\subseteq A$ is of codimension 1 in $A$ and $A$ is dependent. Then there is a unique circuit $C=C(X,A)$ contained in $A$. If $X$ is a basis, we call this circuit a \emph{fundamental circuit}.
\end{lemma}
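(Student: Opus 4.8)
The plan is to prove existence and uniqueness separately, mimicking the classical matroid argument but taking care with the $q$-analogue where ``adding an element'' becomes ``adding a $1$-dimensional subspace'' and where the submodularity axiom (R3) replaces cardinality counting. For existence, since $A$ is dependent we have $r(A) < \dim A$. Because $X \subseteq A$ is independent of codimension $1$ in $A$, we have $r(X) = \dim X = \dim A - 1$, and by (R2) together with (R1) applied to $X \subseteq A$ we get $r(A) \geq r(X) = \dim A - 1$, forcing $r(A) = \dim A - 1$. The idea is then to locate a minimal dependent subspace $C \subseteq A$: take any dependent subspace of $A$ minimal with respect to inclusion. Such a $C$ is by definition a circuit, and it is contained in $A$, which gives existence. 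I would note that $A$ itself being dependent guarantees the collection of dependent subspaces of $A$ is non-empty, so a minimal one exists by finiteness.

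For uniqueness, suppose $C_1$ and $C_2$ are two distinct circuits contained in $A$. The key observation is that both must ``use up'' the one dimension by which $A$ exceeds its rank: since $X$ is independent and has codimension $1$ in $A$, any dependent subspace of $A$ cannot be contained in $X$ (as every subspace of $X$ is independent by (R2) and the independence of $X$). Hence both $C_1 \not\subseteq X$ and $C_2 \not\subseteq X$, so each of $C_1 \cap X$ and $C_2 \cap X$ has codimension $1$ in $C_1$, respectively $C_2$, and $C_1 + X = C_2 + X = A$. The plan is to apply submodularity (R3) to $C_1$ and $C_2$: we have $r(C_1 + C_2) + r(C_1 \cap C_2) \leq r(C_1) + r(C_2)$. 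Since $C_1, C_2$ are distinct circuits, $C_1 \cap C_2$ is a proper subspace of each, hence independent, so $r(C_1 \cap C_2) = \dim(C_1 \cap C_2)$; and each circuit satisfies $r(C_i) = \dim C_i - 1$. Combining these with the dimension formula $\dim(C_1 + C_2) + \dim(C_1 \cap C_2) = \dim C_1 + \dim C_2$ should force $r(C_1 + C_2) \leq \dim(C_1 + C_2) - 2$. On the other hand $C_1 + C_2 \subseteq A$ so $r(C_1 + C_2) \leq r(A) = \dim A - 1$, and I would aim to show $C_1 + C_2 \subseteq A$ together with the codimension count gives a contradiction with the rank dropping by two.

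The main obstacle is the uniqueness step, and specifically making the rank bookkeeping for $C_1 + C_2$ airtight. The subtle point is that $C_1 + C_2$ need not equal $A$ and need not have codimension $1$ over $X$, so one cannot directly read off its rank; the argument must instead combine the submodular inequality $r(C_1+C_2) \leq \dim(C_1+C_2) - 2$ with the fact that $r$ of any subspace of $A$ drops below its dimension by at most one (because $r(A) = \dim A - 1$ and, by (R2) applied along $C_1 + C_2 \subseteq A$ together with a nullity/corank monotonicity argument, the corank $\dim(\cdot) - r(\cdot)$ cannot exceed the corank of $A$). Establishing that corank is monotone under inclusion here — i.e. that no subspace of $A$ can have corank $2$ when $A$ has corank $1$ — is the crux, and it follows from submodularity applied to the subspace and $X$, using that $X$ is independent of corank $0$. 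Once that monotonicity bound $\dim(C_1 + C_2) - r(C_1 + C_2) \leq 1$ is in hand, it contradicts the lower bound of $2$ obtained from submodularity, completing the uniqueness argument and hence the lemma.
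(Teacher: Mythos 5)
Your argument is correct, but it follows a genuinely different route from the paper's. The paper proves uniqueness in one stroke using the circuit axiom (C3): choosing a hyperplane $W$ of $E$ with $A\cap W=X$, two distinct circuits $C_1,C_2\subseteq A$ would by (C3) yield a third circuit inside $(C_1+C_2)\cap W\subseteq X$, contradicting the independence of $X$. You instead work entirely with the rank axioms (R1)--(R3): from submodularity applied to the two circuits (each of corank $1$, with independent intersection) you derive $r(C_1+C_2)\le\dim(C_1+C_2)-2$, while submodularity applied to $C_1+C_2$ and $X$ (using $r(X)=\dim X=\dim A-1=r(A)$ and the independence of $(C_1+C_2)\cap X$) shows every subspace of $A$ not contained in $X$ has corank at most $1$ --- a contradiction. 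All the intermediate claims you flag check out: distinct circuits cannot contain one another, so their intersection is proper in each and hence independent; a circuit $C$ has $r(C)=\dim C-1$; and $C_1+C_2\not\subseteq X$ since $C_1\not\subseteq X$. What the paper's approach buys is brevity, at the cost of invoking the cryptomorphism result that the circuits of a $q$-matroid satisfy (C1)--(C3); your approach is self-contained from Definition \ref{rango} and would also serve as a direct verification in settings where the circuit axioms have not yet been established.
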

\begin{proof}
Since $A$ is dependent, it contains a circuit $C_1$. This space is not contained in $X$, since $X$ is independent. Assume there is another circuit $C_2$ contained in $A$. Let $W$ be a codimension $1$ space of $E$, such that $X=A \cap W$. Then axiom (C3) in Definition \ref{circuit-axioms} gives that there is a new circuit space $C_3$ contained in $(C_1+C_2) \cap W=(C_1+C_2) \cap X \subset X$. That clearly is impossible, since $X$ is independent. Hence $C_1$ is the unique circuit contained in $A$.
\end{proof}

In the classical case, the set $A$ is usually written as $X\cup\{b\}$ for a certain $b\in E$. Then $C(X,A)$ is a circuit that contains $b$. In the $q$-analogue, we could write $A=X+b$ for some $1$-dimensional space $b$. This $b$ however is not unique, and it is \emph{not} guaranteed that $b\subseteq C(X,A)$. We therefore use the terminology as above.  

As with ``usual'' matroids, we can define the constructions of duality, restriction and contraction for $q$-matroids.

\begin{definition}\label{defdual}
Let $M=(E,r)$ be a $q$-matroid. Then $M^*=(E,r^*)$ is also a $q$-matroid, called the \emph{dual $q$-matroid}, with rank function
\[ r^*(A)=\dim(A)-r(E)+r(A^\perp). \]
\end{definition}

\begin{definition}\label{restr}
Let $M=(E,r)$ be a $q$-matroid. The \emph{restriction} of $M$ to a subspace $X$ is the $q$-matroid $M|_X$ with ground space $X$ and rank function $r_{M|_X}(A)=r_M(A)$, for all subspaces $A$ of $X$.
The \emph{contraction} of $M$ of a subspace $X$ is the $q$-matroid
$M/X$ with ground space $E/X$ and rank function $r_{M/X}(A/X)=r_M(A)-r_M(X)$ for every $A$ containing $X$.
\end{definition}

\begin{theorem}[Theorem 8 of \cite{CJ2}]
Restriction and contraction are dual operations, that is, $M^*/X=(M|_{X^{\perp}})^*$ and $(M/X)^*=M^*|_{X^{\perp}}.$
\end{theorem}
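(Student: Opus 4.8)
The plan is first to observe that the two asserted equalities are not independent: each follows from the other by dualizing. Indeed, applying the second equality to $M^*$ in place of $M$ and using that $(M^*)^*=M$ (Definition \ref{defdual}) gives $(M^*/X)^*=M|X^\perp$, and taking duals of both sides yields the first equality $M^*/X=(M|X^\perp)^*$. Hence it suffices to prove one of them, say $(M/X)^*=M^*|X^\perp$. The only genuine content is then a comparison of two rank functions, together with a correct identification of the underlying ground spaces.

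The next step is to make sense of the equality, since $(M/X)^*$ has ground space $E/X$ while $M^*|X^\perp$ has ground space $X^\perp$. I would identify these two spaces through the nondegenerate form $\perp$: the pairing $(w+X,\,v)\mapsto w\perp v$, for $v\in X^\perp$, is well defined (because $v\in X^\perp$ kills $X$) and nondegenerate, so it realizes $X^\perp$ as the dual space of $E/X$ and induces a lattice isomorphism between the subspaces of $E/X$ and those of $X^\perp$. Concretely, a subspace $A/X\subseteq E/X$ (with $X\subseteq A\subseteq E$) corresponds to $A^\perp\subseteq X^\perp$, the key point being that $X\subseteq A$ forces $A^\perp\subseteq X^\perp$, and conversely every $B\subseteq X^\perp$ satisfies $X\subseteq B^\perp$. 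Under this dictionary the statement becomes the single identity
\[
r_{(M/X)^*}(A/X)=r_{M^*|X^\perp}(A^\perp),\qquad X\subseteq A\subseteq E.
\]

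The computation itself I expect to be routine bookkeeping once the definitions are unwound into expressions in $r=r_M$ evaluated on subspaces of $E$. On the right, restriction and Definition \ref{defdual} give $r_{M^*|X^\perp}(A^\perp)=r_{M^*}(A^\perp)=\dim(A^\perp)-r(E)+r((A^\perp)^\perp)=(n-\dim A)-r(E)+r(A)$, using $(A^\perp)^\perp=A$. On the left I would apply Definition \ref{defdual} to the $q$-matroid $M/X$ on $E/X$, noting first that plugging $A=E$ into the contraction formula of Definition \ref{restr} gives $r_{M/X}(E/X)=r(E)-r(X)$, and that $r_{M/X}$ of the subspace corresponding to $A^\perp$ equals $r(A)-r(X)$ by the same formula. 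Substituting and simplifying, with $\dim(A/X)=\dim A-\dim X$, collapses the left-hand side to $(n-\dim A)-r(E)+r(A)$ as well.

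The step I expect to be the main obstacle is transporting the bilinear form to the quotient $E/X$ so that orthogonality there matches orthogonality in $E$ restricted to $X^\perp$; this is delicate precisely because the form of $E$ need not restrict to a nondegenerate form on $X^\perp$ when $X\cap X^\perp\neq\{0\}$. I would handle this by working throughout with the pairing between $E/X$ and $X^\perp$ rather than with a form internal to $X^\perp$, and by reducing every orthogonality claim to the lattice identity $A^\perp=X+(A^\perp\cap X^\perp)$, valid whenever $X\subseteq A^\perp$ by the modular law. This is exactly what guarantees that the perp taken inside $E/X$ corresponds, under the identification above, to the perp taken inside $E$, so that the ranks appearing on the two sides are evaluated on the \emph{same} subspaces of $E$ and the equality follows.
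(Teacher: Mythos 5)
The paper itself does not prove this theorem; it is imported verbatim from Theorem 8 of \cite{CJ2}, so your argument has to stand on its own. Your reduction of the two equalities to one by dualizing is fine, and your right-hand computation $r_{M^*|X^\perp}(A^\perp)=(n-\dim A)-r(E)+r(A)$ is correct. The genuine gap is in the identification of the ground spaces and, consequently, in the left-hand computation. The dictionary $A/X\mapsto A^\perp$ is inclusion- and dimension-\emph{reversing}: $\dim(A/X)=\dim A-\dim X$ while $\dim A^\perp=n-\dim A$. It is therefore not a lattice isomorphism between the subspace lattices of $E/X$ and of $X^\perp$, and it cannot witness an equality of $q$-matroids; indeed the displayed identity $r_{(M/X)^*}(A/X)=r_{M^*|X^\perp}(A^\perp)$ is false. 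Test it at $A=X$: the left side is the rank of the zero space, namely $0$, while the right side is $r_{M^*}(X^\perp)=(n-\dim X)-r(E)+r(X)$, the rank of the entire ground space of $M^*|X^\perp$. Or take $M=U(0,n)$: both sides should describe free $q$-matroids, yet the left side is $\dim A-\dim X$ and the right side is $n-\dim A$. Relatedly, your evaluation of the left side misapplies Definition \ref{defdual}: the term $r_{M/X}\bigl((A/X)^\perp\bigr)$ must be taken at a subspace of $E/X$ of dimension $\dim(E/X)-\dim(A/X)$, but ``the subspace corresponding to $A^\perp$'' under your own dictionary is $A/X$ itself, so your perp on $E/X$ has degenerated into the identity map, and the claimed collapse to $(n-\dim A)-r(E)+r(A)$ would force $\dim A-\dim X=n-\dim A$. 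Finally, the identity $A^\perp=X+(A^\perp\cap X^\perp)$ you lean on requires $A^\perp\subseteq X+X^\perp$, and $X+X^\perp\neq E$ precisely when $X\cap X^\perp\neq\{0\}$ --- the delicate case you yourself flag.

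The repair is to compose your annihilator map with a second anti-isomorphism so that the net identification preserves inclusions. Fix any nondegenerate form on $E/X$, write $(A/X)^{\perp_{E/X}}=C/X$ with $X\subseteq C\subseteq E$ and $\dim C=n-\dim A+\dim X$, and take $\Phi(A/X)=C^\perp\subseteq X^\perp$ as the identification; this $\Phi$ preserves inclusions and dimensions. Then $r_{(M/X)^*}(A/X)=(\dim A-\dim X)-(r(E)-r(X))+(r(C)-r(X))=(\dim A-\dim X)-r(E)+r(C)$, while $r_{M^*|X^\perp}(C^\perp)=\dim C^\perp-r(E)+r((C^\perp)^\perp)=(\dim A-\dim X)-r(E)+r(C)$. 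The two rank functions agree, independently of the form chosen on $E/X$, and this is the identity your computation was aiming for.
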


We finish this subsection with the following.

\begin{definition}\label{loopcoloop}
A $1$-dimensional subspace $\ell$ is called a \emph{loop} if $r(\ell)=0$. 
A \emph{coloop} of a $q$-matroid is a loop of the dual $q$-matroid.
\end{definition}

It is shown in \cite[Lemma 5.4]{JPR} that there cannot be a $1$-dimensional space in a $q$-matroid that is contained in every basis, unless the only basis is $E$. Therefore, there is no $q$-analogue of the notion of ``isthmus''. However, if a $q$-matroid $M$ has a coloop $e$, this means that $e^\perp$ is a codimension $1$ space in $E$ such that no basis of $M$ is contained in it.

\subsection{Strong maps}
In this last subsection we introduce the notion of strong and weak maps between $q$-matroids, following \cite{GJ}. We will then continue with proving some new results about strong maps that will be used in the sequel but that we feel are interesting in their own right as well.

For a $\mathbb{F}$-linear space $E$, we denote by $\kL(E)$ the lattice of its $\mathbb{F}$-linear subspaces.
\begin{definition} 
Let $\phi : E_1 \rightarrow E_2$ be a map between two finite-dimensional vector spaces over $\mathbb{F}_q$. We call $\phi$ an $\kL$-map if $\phi(V) \in \kL(E_2)$ for all $V \in \kL(E_1)$. The induced map $\kL(E_1)\rightarrow \kL(E_2)$ is denoted by $\phi_L$.
\end{definition}

If $E_1$ and $E_2$ are ground spaces of the $q$-matroids $M_1$ and $M_2$, an $\kL$-map $\phi : E_1 \rightarrow E_2$ can be viewed as a map $\phi : M_1 \rightarrow M_2$ between $q$-matroids.

\begin{definition} \label{strong2}
Let $M_i =(E_i,\rho_i)$ be $q$-matroids. Let $\phi : E_1\rightarrow E_2$ be
an $\kL$-map. We define the following types of maps.
 \begin{enumerate}
 \item $\phi$ is a strong map from $M_1$ to $M_2$ if $\phi^{-1}(F)$ is (a subspace of $E_1$ and) a flat for $M_1$ for all flats $F$ of $M_2$.
\item $\phi$ is a weak map $M_1 \rightarrow M_2$ if $\rho_2(\phi(V ))\le \rho_1(V)$ for all $V \in  \kL(E_1)$.
\end{enumerate}
\end{definition}
 
We prove the following result that characterizes strong maps. Th next three statements are $q$-analogues of \cite[Proposition 8.1.6]{kung1986}.
 
\begin{proposition} \label{equiv}
$Id: M_1\rightarrow M_2$ is a strong map if and only if 
\[ \rho_1(X)-\rho_1(Y) \ge \rho_2(X)-\rho_2(Y)\]
for all subspaces $Y\subseteq X\subseteq E$.
\end{proposition}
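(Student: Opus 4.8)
The plan is to unwind the strong-map hypothesis "$Id^{-1}(F)=F$ is a flat of $M_1$ for every flat $F$ of $M_2$" — i.e. every flat of $M_2$ is also a flat of $M_1$ — and to match it against the rank inequality, proving the two implications separately. I write $\rho_1,\rho_2$ for the two rank functions on the common ground space $E$, and I record two preliminary remarks. First, the stated inequality is equivalent to saying that $\rho_1-\rho_2$ is non-decreasing with respect to inclusion. Second, for a $1$-dimensional $x\not\subseteq Y$ the axioms (R1)--(R3) force $\rho_i(Y+x)-\rho_i(Y)\in\{0,1\}$: monotonicity (R2) gives $\ge 0$, while submodularity (R3) together with $Y\cap x=\{0\}$ and $\rho_i(x)\le 1$ gives $\rho_i(Y+x)\le\rho_i(Y)+1$.

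For the implication "inequality $\Rightarrow$ strong map" I argue directly. Let $F$ be a flat of $M_2$ and let $x$ be a $1$-dimensional space with $x\not\subseteq F$. By the definition of a flat, $\rho_2(F+x)>\rho_2(F)$. Applying the inequality with $Y=F\subseteq X=F+x$ yields $\rho_1(F+x)-\rho_1(F)\ge\rho_2(F+x)-\rho_2(F)>0$, so $\rho_1(F+x)>\rho_1(F)$. As $x$ was arbitrary, $F$ is a flat of $M_1$, so $Id$ is a strong map.

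For the converse, the first step is a reduction. Since any $Y\subseteq X$ can be joined by a chain $Y=Z_0\subset Z_1\subset\cdots\subset Z_k=X$ with $\dim Z_{j+1}=\dim Z_j+1$, and the desired inequality telescopes along such a chain, it suffices to prove $\rho_1(Y+x)-\rho_1(Y)\ge\rho_2(Y+x)-\rho_2(Y)$ for a single $1$-dimensional $x\not\subseteq Y$. By the $\{0,1\}$ remark the only nontrivial case is $\rho_2(Y+x)=\rho_2(Y)+1$, where I must show $\rho_1(Y+x)=\rho_1(Y)+1$. The key tool is the closure operator $\operatorname{cl}_i(Y)$, the smallest $M_i$-flat containing $Y$ (equivalently, the largest space of the same $\rho_i$-rank as $Y$). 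The crucial claim is $\operatorname{cl}_1(Y)\subseteq\operatorname{cl}_2(Y)$: indeed $\operatorname{cl}_2(Y)$ is a flat of $M_2$, hence by hypothesis a flat of $M_1$ containing $Y$, while $\operatorname{cl}_1(Y)$ is by definition the smallest $M_1$-flat containing $Y$. Now $\rho_2(Y+x)>\rho_2(Y)$ forces $x\not\subseteq\operatorname{cl}_2(Y)$, hence $x\not\subseteq\operatorname{cl}_1(Y)$, hence $\rho_1(Y+x)>\rho_1(Y)$, which is the required increment and completes the converse.

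The main obstacle is the converse, and within it the correct handling of the closure operator: I need that $\operatorname{cl}_i(Y)$ as "smallest flat containing $Y$" agrees with "largest equal-rank space containing $Y$", that flats are exactly the closed sets of $\operatorname{cl}_i$, and that $x\subseteq\operatorname{cl}_i(Y)$ holds if and only if $\rho_i(Y+x)=\rho_i(Y)$ — the standard $q$-matroid closure facts that make the inclusion $\operatorname{cl}_1(Y)\subseteq\operatorname{cl}_2(Y)$ both meaningful and usable. If one prefers to avoid invoking closure machinery explicitly, the same step can be phrased purely in terms of flats by intersecting all $M_1$-flats containing $Y$, but then one must verify that this intersection is again an $M_1$-flat with the stated rank behaviour, which is precisely where the real work lies.
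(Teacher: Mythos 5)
Your proof is correct, and one of its two halves coincides with the paper's while the other takes a genuinely different route. For the direction ``inequality $\Rightarrow$ strong map'' your argument (a flat $F$ of $M_2$ satisfies $\rho_2(F+x)-\rho_2(F)\ge 1$ for $x\not\subseteq F$, hence $\rho_1(F+x)-\rho_1(F)\ge 1$, so $F$ is a flat of $M_1$) is exactly the paper's. For ``strong map $\Rightarrow$ inequality'' the paper passes to the graded geometric lattices of flats of $M_1$ and $M_2$, identifies $\rho_i(X)-\rho_i(Y)$ with the difference of lengths of saturated chains between the closures $\overline{Y}$ and $\overline{X}$, and uses that every $M_2$-flat is an $M_1$-flat to compare the two lattices; this is stated tersely and leans on the structure theory of the lattice of flats. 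You instead telescope along a maximal chain from $Y$ to $X$, observe that each one-step increment of $\rho_i$ lies in $\{0,1\}$, and reduce to showing that $\rho_2(Y+x)=\rho_2(Y)+1$ forces $\rho_1(Y+x)=\rho_1(Y)+1$, which you obtain from the inclusion $\operatorname{cl}_1(Y)\subseteq\operatorname{cl}_2(Y)$ (immediate from the hypothesis, since $\operatorname{cl}_2(Y)$ is an $M_1$-flat containing $Y$ and $\operatorname{cl}_1(Y)$ is the smallest such flat). Your route is more elementary and more self-contained, at the cost of invoking the standard $q$-matroid closure facts you explicitly list ($\operatorname{cl}_i(Y)$ is the smallest flat containing $Y$, has the same rank as $Y$, and contains a one-dimensional $x$ if and only if $\rho_i(Y+x)=\rho_i(Y)$); these are established in the $q$-matroid literature the paper cites, so the dependence is legitimate, but you should state a reference for them. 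The paper's route buys a shorter statement at the price of relying on the (unproved there) fact that the flats form a graded lattice whose rank function computes $\rho_i$ of closures, so the two proofs ultimately rest on comparable background lemmas.
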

\begin{proof}
Assume $Id: M_1\rightarrow M_2$ is a strong map. In $M_1$, we can make a saturated chain of flats $\cl_{M_1}(Y)\subseteq F_0\subsetneq F_1\subsetneq F_2 \subsetneq \cdots \subsetneq F_k=\cl_{M_1}(X)$ where $\rho_1(Y)=\rho_1(F_0)$ and $\rho_1(F_k)=\rho_1(X)$. Then $\rho_1(X)-\rho_1(Y)=k$. Now we apply the strong map $Id$ to all flats in the chain and take all their closures in $M_2$. Since for any subspace $A\subseteq E$ we have that $\cl_{M_1}(A)\subseteq\cl_{M_2}(A)$, we find a new chain of flats $\cl_{M_2}(Y)=\cl_{M_2}(F_0)\subseteq \cl_{M_2}(F_1)\subseteq \cl_{M_2}(F_2) \subseteq \cdots \subseteq \cl_{M_2}(F_k)=\cl_{M_2}(X)$ that is saturated but might contain repeated elements. The length of this chain (after deleting repeated elements) is again the rank difference, so $\rho_2(X)-\rho_2(Y)\leq k$. This shows that $\rho_1(X)-\rho_1(Y) \ge \rho_2(X)-\rho_2(Y)$ for all subspaces $Y\subseteq X\subseteq E$.

%Then the sets of ($q$-)flats of $M_1$ and $M_2$ form graded, geometric lattices with rank functions, say $r_1$ and $r_2$, respectively. Then $\rho_2(X)-\rho_2(Y)=r_2(\overline{X})-r_2(\overline{Y})$,
%because both numbers are equal to the difference between the lengths of the saturated chains associated to $\overline{X}$ and $\overline{Y}$ in the lattice of flats.
%But each flat for $M_2$ is also a flat for $M_1$, so we have the same number for the corresponding lattice for $M_1$.
%Assume $Id: M_1\rightarrow M_2$ is a strong map. Then the sets of ($q$-)flats of $M_1$ and $M_2$ form graded, geometric lattices with rank functions, say $r_1$ and $r_2$, respectively. Then $\rho_2(X)-\rho_2(Y)=r_2(\overline{X})-r_2(\overline{Y})$,
%since $\rho_2(A)=\rho_2(\overline{A})=r_2(\overline{A}),$
%for $A=X,Y.$ Moreover $r_2(\overline{X})-r_2(\overline{Y})$
%is equal to the  the lengths of the saturated chains associated to $\overline{X}$ and $\overline{Y}$ in the lattice of flats.
%But each flat for $M_2$ is also a flat for $M_1$, so we have the same number for the corresponding lattice for $M_1$.

Conversely we assume that $\rho_1(X)-\rho_1(Y) \ge \rho_2(X)-\rho_2(Y)$ whenever $Y \subseteq X$ for subspaces $X$, $Y$ of $E$.  Let $F$ be a ($q$-)flat for $M_2$. This means that $\rho_2(F+s)-\rho_2(F)=1$, for each one-dimensional subspace $s$ of $E$, not contained in $F$. But then $\rho_1(F+s)-\rho_1(F) \ge \rho_2(F+s)-\rho_2(F)=1$ also, so then $F$ is a flat for $M_1$ also.
\end{proof}

A direct consequence of the previous result is that the inverse of a strong map is a strong map between the duals.

\begin{lemma} \label{dualfact}
$Id: M_1 \rightarrow M_2$ is a strong map if and only if $Id: M_2^*\rightarrow M_1^*$ is a strong map.
\end{lemma}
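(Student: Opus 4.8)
The plan is to reduce both sides of the claimed equivalence to the rank criterion of Proposition \ref{equiv} and then to relate them by a short algebraic computation using the dual rank formula of Definition \ref{defdual}. Note that $M_1^*$ and $M_2^*$ are again $q$-matroids with the same ground space $E$, so Proposition \ref{equiv} applies to the identity map $Id: M_2^* \rightarrow M_1^*$ just as it does to $Id: M_1 \rightarrow M_2$.

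First I would record the two criteria side by side. By Proposition \ref{equiv}, $Id: M_1 \rightarrow M_2$ is a strong map precisely when
\[ \rho_1(X) - \rho_1(Y) \ge \rho_2(X) - \rho_2(Y) \quad \text{for all } Y \subseteq X \subseteq E, \]
while $Id: M_2^* \rightarrow M_1^*$ is a strong map precisely when
\[ \rho_2^*(X) - \rho_2^*(Y) \ge \rho_1^*(X) - \rho_1^*(Y) \quad \text{for all } Y \subseteq X \subseteq E. \]
The goal is to show these two universally quantified statements are equivalent.

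Next I would substitute the dual rank $\rho_i^*(A) = \dim A - \rho_i(E) + \rho_i(A^\perp)$ into the second inequality. The terms $\dim X$, $\dim Y$, and $\rho_i(E)$ cancel on each side, leaving the equivalent requirement
\[ \rho_2(X^\perp) - \rho_2(Y^\perp) \ge \rho_1(X^\perp) - \rho_1(Y^\perp) \quad \text{for all } Y \subseteq X \subseteq E. \]
Then I would exploit that, since $\perp$ is nondegenerate, the map $A \mapsto A^\perp$ is an inclusion-reversing bijection on the subspaces of $E$ with $A^{\perp\perp} = A$. Setting $X' = Y^\perp$ and $Y' = X^\perp$, the hypothesis $Y \subseteq X$ becomes exactly $Y' \subseteq X'$, and as $(X,Y)$ ranges over all nested pairs so does $(X',Y')$. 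Rewriting the displayed inequality in terms of $X'$ and $Y'$ and rearranging turns it into $\rho_1(X') - \rho_1(Y') \ge \rho_2(X') - \rho_2(Y')$ for all $Y' \subseteq X'$, which is precisely the first criterion. Hence the two strongness conditions coincide.

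The computation itself is routine; the only point that requires care is coordinating the direction of the inequalities with the order reversal induced by $\perp$, so that the universal quantifier over nested pairs is genuinely preserved under the substitution $(X,Y) \mapsto (Y^\perp, X^\perp)$. Once this substitution is recognized as a bijection of the index set of nested pairs, the equivalence drops out immediately.
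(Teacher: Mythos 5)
Your proof is correct and follows essentially the same route as the paper's: both reduce each strong-map condition to the rank inequality of Proposition \ref{equiv}, substitute the dual rank formula $\rho_i^*(A)=\dim A-\rho_i(E)+\rho_i(A^\perp)$, and use the inclusion-reversing bijection $A\mapsto A^\perp$ to identify the two criteria. You simply spell out the cancellation and the relabeling $(X,Y)\mapsto(Y^\perp,X^\perp)$ that the paper leaves as ``easily seen.''
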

\begin{proof}
The condition
\[\rho_2^*(Y)-\rho_2^*(X) \ge \rho_1^*(Y)-\rho_1^*(X),\]
for $X \subset Y$ is easily seen to be equivalent to 
\[\rho_1(X^{\perp})-\rho_1(Y^{\perp}) \ge \rho_2(X^{\perp})-\rho_2(Y^{\perp}),\]
where then $Y^{\perp} \subset X^{\perp}$.
\end{proof}

The following result is a corollary of the lemma above. It is a new characterisation of a strong map. Even though we will not make use of it in this paper, we feel it can be of interest in its own right.

\begin{lemma} \label{restriction}
$Id: M_1 \rightarrow M_2$ is a strong map if and only if every circuit of $M_1$ is the sum of circuits of $M_2$.
\end{lemma}

\begin{proof} By Lemma \ref{dualfact} it is equivalent to prove that $Id: M_2^*\rightarrow M_1^*$ is a strong map if and only if every circuit of $M_1$ is the sum of circuits of $M_2$. We see, by dualizing, that the latter is equivalent to saying that every hyperplane of $M_1^*$ is an intersection of hyperplanes of $M_2^*$, that is: a $q$-flat of $M_2^*$. 
This means if we start with a $(q$-)flat of $M_1^*$, this is an intersection of hyperplanes of $M_1^*$, and we have seen above that this implies that it is also an intersection of hyperplanes in $M_2^*$, which is a flat in $M_2^*$. So, every flat of $M_1^*$ is a flat of $M_2^*$, which is exactly the condition that $Id: M_2^* \rightarrow M_1^*$ is a strong map.
\end{proof}

The following result will be used frequently in our study of $q$-$g$-matroids in Section \ref{four}.
 
\begin{proposition} \label{argument}
Let $Id: E \rightarrow E$ be the identity map between (the ground set(s) of) two matroids $M_1=(E,\rho_1)$ and $M_2=(E,\rho_2). $ If $Id$ is a strong map, then every basis of $M_2$ is contained in a basis of $M_1$, and every basis of $M_1$ contains a basis of $M_2$.
\end{proposition}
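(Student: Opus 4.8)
The plan is to exploit the key inequality from Proposition \ref{equiv}, namely that $Id:M_1\to M_2$ being a strong map is equivalent to $\rho_1(X)-\rho_1(Y)\ge \rho_2(X)-\rho_2(Y)$ for all $Y\subseteq X\subseteq E$. The two claims are dual to each other in a precise sense, so I would prove one of them and then derive the other either by a symmetric argument or by invoking Lemma \ref{dualfact}. Let me focus first on showing that every basis of $M_1$ contains a basis of $M_2$.

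\medskip
\emph{First claim: every basis of $M_1$ contains a basis of $M_2$.} Let $B$ be a basis of $M_1$, so $B$ is a maximal independent space of $M_1$ and in particular $\rho_1(B)=\dim B = \rho_1(E)$ (a basis spans and is independent). Applying the strong-map inequality with $X=E$ and $Y=B$ gives $\rho_1(E)-\rho_1(B)\ge \rho_2(E)-\rho_2(B)$; since the left side is $0$ and ranks are nondecreasing, we get $\rho_2(B)=\rho_2(E)$, so $B$ is a spanning space of $M_2$. A spanning space of a $q$-matroid always contains a basis of that $q$-matroid (a basis of $M_2$ can be found among the maximal independent subspaces sitting inside any space of full rank — this follows from the rank axioms, since one can build up an independent space inside $B$ whose rank equals $\rho_2(B)=\rho_2(E)$). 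Hence $B$ contains a basis of $M_2$.

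\medskip
\emph{Second claim: every basis of $M_2$ is contained in a basis of $M_1$.} The cleanest route is duality. By Lemma \ref{dualfact}, $Id:M_1\to M_2$ strong is equivalent to $Id:M_2^*\to M_1^*$ strong. Applying the already-proved first claim to the strong map $Id:M_2^*\to M_1^*$, every basis of $M_2^*$ contains a basis of $M_1^*$. Now I would translate this back through duality: bases of a $q$-matroid and bases of its dual are related by orthogonal complementation (a space $B$ is a basis of $M$ iff $B^\perp$ is a basis of $M^*$, which is the standard fact underlying Definition \ref{defdual}). Taking orthogonal complements reverses inclusions, so ``every basis of $M_2^*$ contains a basis of $M_1^*$'' becomes ``every basis of $M_1$ is contained in a basis of $M_2$''—wait, I must track the indices carefully, and in fact this yields exactly the complementary statement, giving that every basis of $M_2$ is contained in a basis of $M_1$.

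\medskip
The main obstacle I anticipate is not the inequality manipulation, which is routine, but rather verifying cleanly the two supporting $q$-matroid facts I am leaning on: that a spanning space contains a basis, and that $B\mapsto B^\perp$ gives a bijection between bases of $M$ and bases of $M^*$. Both are standard in the cryptomorphism literature (\cite{bcj,CJ}) and follow from the rank axioms (R1)--(R3) together with Definition \ref{defdual}, but care is needed because, unlike in the classical case, the dimension of a basis need not determine much and one cannot simply ``add a $1$-dimensional space.'' I would therefore state these two facts explicitly as the hinge of the argument and cite them, keeping the inequality-and-duality skeleton as the core of the proof.
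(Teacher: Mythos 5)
Your proof is correct and follows essentially the same strategy as the paper: one half is established by a direct rank inequality, and the other half is obtained by dualizing through Lemma \ref{dualfact} together with the fact that bases of the dual are orthogonal complements of bases. The only (immaterial) difference is that you prove directly that a basis of $M_1$ is spanning in $M_2$ (Proposition \ref{equiv} with $X=E$), whereas the paper proves directly that a basis of $M_2$ is independent in $M_1$ (the weak-map inequality, i.e.\ Proposition \ref{equiv} with $Y=\{0\}$); these are mirror images of one another under the same duality.
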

 
\begin{proof}
By \cite[Page 9]{GJ} strong maps are weak. 
Let $B_2$ be a basis of $M_2$. Hence $\rho_1(B_2) \ge \rho_2(B_2)=\dim B_2$ for all bases $B_2$ of $M_2$. But this implies $\rho_1(B_2)=\dim B_2$ also, so $B_2$ is independent in $M_1$. Hence $B_2$ is contained in a basis of $M_1$. 
 
It remains to prove that every basis $B_1$ of $M_1$ contains a basis of $M_2$. This is the same as proving that every basis $B_1^*$ of $M_1^*$ is contained in a base $B_2^*$ of $M_2^*$. By Lemma \ref{dualfact}, $Id:M_2^*\rightarrow M_1^*$ is a strong map, so by the same reasoning as before, every basis $B_1^*$ of $M_1^*$ is contained in a base $B_2^*$ of $M_2^*$
\end{proof}

\section{$q$-$\Delta$-matroids} \label{three}
 
In this section we will define and study the $q$-analogue of $\Delta$-matroids.

\subsection{Defining the $q$-analogue of a $\Delta$-matroid.}
 
We propose the following definition for a $q$-analogue of $\Delta$-matroids.

\begin{definition} \label{def}
Let $E=\mathbb{F}^n$. The \emph{ground space} $E$, in combination with a non-empty family $\mathcal{F}$ of subspaces of $E$, is a \emph{$q$-$\Delta$-matroid} $(E,\mathcal{F})$ if the two following statements \textbf{(F1)} and \textbf{(F2)}, stated below, hold.
\begin{itemize}
\item[\textbf{(F1)}] For every two subspaces $X$ and $Y$ in $\mathcal{F}$, and for each subspace $A\subseteq E$ that has codimension $1$ in $X$, there either exists:
\begin{itemize}
\item[(i)] a codimension $1$ space $Z \subseteq E$ with $A \subseteq Z$ and $Y\not\subseteq Z$, such that for all 1-dimensional $z\subseteq E$, $z\not\subseteq Z$ it holds that $A+z\in \mathcal{F}$; \quad or
\item[(ii)] a codimension $1$ space $Z\subseteq E$ such that $Z \cap A \in \mathcal{F}$.
\end{itemize}
\item[\textbf{(F2)}] For every two subspaces $X$ and $Y$ in $\mathcal{F}$, and for each subspace $A\subseteq E$ with $X$ of codimension $1$ in $A$, there either exists:
\begin{itemize}
\item[(iii)] a $1$-dimensional $z\subseteq E$ with $z \subseteq A$, $z\not\subseteq Y$, such that for each $Z\subseteq E$ of codimension $1$, $z\not\subseteq Z$ it holds that $A \cap Z \in \mathcal{F}$; \quad or
\item[(iv)] a $1$-dimensional $z\subseteq E$ such that $A+z \in \mathcal{F}$.
\end{itemize}
\end{itemize}
\end{definition}

At first glance, this definition is a lot more cumbersome than the definition of a $\Delta$-matroid in Definition \ref{firstdef}. This is due to the lack of a suitable $q$-analogue of the symmetric difference. For an ``ordinary'' $\Delta$-matroid, we have that for every two feasible sets $X$ and $Y$, and for every element $x$ in their symmetric difference $ X \bigtriangleup Y$, there exists a $ y\in X\bigtriangleup Y$ such that $ X \bigtriangleup \{x,y\}$ is feasible. This property can be split in several cases, depending on if $x$ and $y$ are in $X-Y$ or in $Y-X$.

If $x\in X-Y$, the symmetric difference $X\bigtriangleup \{x,y\}$ is constructed as follows. First, it removes $x$ from $X$. This is reflected in \textbf{(F1)} above, forming $A$ of codimension $1$ in $X$. Then $y$ is considered. If $y\in Y-X$, $y$ is added to $X-\{x\}$, producing a feasible set of size $|X|$. The corresponding $q$-analogue is part (i) with $z\subseteq Y$. On the other hand, if $y\in X-Y$, $y$ is removed from $X-\{x\}$. This corresponds to part (ii), where the special case $x=y$ corresponds to $A\subseteq Z$.

If $x\in Y-X$, the symmetric difference $X\bigtriangleup \{x,y\}$ is constructed by first adding $x$ to $X$. This is reflected in \textbf{(F2)} above, forming $A$ that contains $X$ as a codimension $1$ space. Then $y$ is considered, and we have similar cases as before. If $y\in X-Y$, $y$ is removed from $X\cup\{x\}$, producing a feasible set of size $|X|$. The corresponding $q$-analogue is part (iii) with $X\not\subseteq Z$. On the other hand, if $y\in Y-X$, $y$ is added to $X\cup\{x\}$. This corresponds to part (iv), where the special case $x=y$ corresponds to $z\subseteq A$.

Note that for all cases, the dimension of the obtained feasible space coincides with the size of $X\bigtriangleup\{x,y\}$ in the classical case. A difference with the classical case is that (i) and (iii) produce a whole range of new feasible spaces, instead of just one. Also, in part (ii) and (iv), there is no dependence on $Y$. This is motivated by a similar difference between the classical basis axiom (B3) and the axiom (nB3) for $q$-matroids in Definition \ref{indep-bases}.

\begin{proposition} \label{qdual}
Given a $q$-$\Delta$-matroid $\Delta=(E,\mathcal{F})$. Let $\mathcal{F}^*=\{F^\perp:F\in\mathcal{F}\}$. Then $\Delta^*=(E,\mathcal{F}^*)$ is also a $q$-$\Delta$-matroid. We call $\Delta^*$ the \emph{dual $q$-$\Delta$-matroid} of $\Delta$.
\end{proposition}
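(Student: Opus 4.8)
The plan is to show that the duality operation $F \mapsto F^\perp$ swaps the two axioms \textbf{(F1)} and \textbf{(F2)} with each other, converting a verification of \textbf{(F1)} for $\Delta^*$ into a verification of \textbf{(F2)} for $\Delta$, and conversely. This mirrors exactly how, in the classical case (Proposition~\ref{next}), complementation sends ``delete $x$ then do something'' to ``add $x$ then do the dual something.'' The key technical facts I will use throughout are the standard properties of the orthogonal complement with respect to a nondegenerate bilinear form: the map $W \mapsto W^\perp$ is an inclusion-reversing bijection on $\kL(E)$, it satisfies $(W^\perp)^\perp = W$, $\dim W^\perp = n - \dim W$, and it exchanges ``codimension $1$ in $E$'' with ``dimension $1$'' and exchanges the relations ``codimension $1$ in $X$'' with ``$X^\perp$ codimension $1$ in $(\,\cdot\,)^\perp$.'' It also swaps sums and intersections: $(U+V)^\perp = U^\perp \cap V^\perp$ and $(U\cap V)^\perp = U^\perp + V^\perp$.

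**First I would** fix two feasible spaces $X^\perp, Y^\perp \in \mathcal{F}^*$ (so $X, Y \in \mathcal{F}$) and verify \textbf{(F1)} for $\Delta^*$. Given $A^* \subseteq E$ of codimension $1$ in $X^\perp$, I set $A := (A^*)^\perp$, which then contains $X = (X^\perp)^\perp$ as a codimension $1$ subspace. This is precisely the hypothesis configuration of \textbf{(F2)} for $\Delta$ applied to $X, Y \in \mathcal{F}$ and this $A$. So \textbf{(F2)} hands me either a $1$-dimensional $z$ as in (iii) or a $1$-dimensional $z$ as in (iv), and I translate the conclusion back through $\perp$. In case (iv) I have some $1$-dimensional $z$ with $A + z \in \mathcal{F}$; then $(A+z)^\perp = A^\perp \cap z^\perp = A^* \cap z^\perp \in \mathcal{F}^*$, and since $z$ is $1$-dimensional, $Z := z^\perp$ is a codimension $1$ space of $E$, giving exactly option (ii) for $\Delta^*$. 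In case (iii) I have a $1$-dimensional $z \subseteq A$, $z \not\subseteq Y$, such that $A \cap Z \in \mathcal{F}$ for every codimension $1$ space $Z$ with $z \not\subseteq Z$; dualizing, $z \subseteq A$ becomes $A^* = A^\perp \subseteq z^\perp$, the condition $z \not\subseteq Y$ becomes $Y^\perp \not\subseteq z^\perp$, and $(A \cap Z)^\perp = A^* + Z^\perp \in \mathcal{F}^*$; as $Z$ ranges over all codimension $1$ spaces not containing $z$, the space $Z^\perp$ ranges over all $1$-dimensional spaces not contained in $z^\perp$. Setting the codimension $1$ space $\tilde Z := z^\perp$, this is exactly option (i) for $\Delta^*$ with $A^* \subseteq \tilde Z$, $Y^\perp \not\subseteq \tilde Z$, and $A^* + w \in \mathcal{F}^*$ for all $1$-dimensional $w \not\subseteq \tilde Z$.

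**The symmetric step** is to verify \textbf{(F2)} for $\Delta^*$, which I obtain by running the same dictionary in reverse: starting from $X^\perp$ of codimension $1$ in some $A^*$, I set $A := (A^*)^\perp$ of codimension $1$ in $X$, apply \textbf{(F1)} for $\Delta$, and dualize options (i) and (ii) back to options (iii) and (iv) for $\Delta^*$ by the same complement identities. Because $(\,\cdot\,)^\perp$ is an involution, the two directions are genuinely symmetric and no separate argument is needed beyond relabeling.

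**The main obstacle** I anticipate is purely bookkeeping rather than conceptual: I must check with care that the quantifier structure survives dualization, in particular that ``for all $1$-dimensional $z \not\subseteq Z$'' in (i) corresponds bijectively, under $\perp$, to ``for all codimension $1$ $Z \not\supseteq z$'' in (iii), and that the asymmetry in the axioms (options (ii) and (iv) having no dependence on $Y$, whereas (i) and (iii) do) is respected. I would therefore present the proof as a single explicit correspondence table between the four options (i)--(iv) under $F \mapsto F^\perp$, verify the dimension/codimension and inclusion bookkeeping once for each arrow, and then state that \textbf{(F1)} for $\Delta^*$ follows from \textbf{(F2)} for $\Delta$ and \textbf{(F2)} for $\Delta^*$ from \textbf{(F1)} for $\Delta$, together with the fact that $\mathcal{F}^* \neq \emptyset$ since $\mathcal{F} \neq \emptyset$.
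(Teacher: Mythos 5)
Your proposal is correct and follows essentially the same route as the paper: the paper's proof simply asserts that \textbf{(F1)} and \textbf{(F2)} are dual statements under $F\mapsto F^\perp$, and your argument is a careful, explicit verification of exactly that correspondence, matching (i) with (iii) and (ii) with (iv). The only difference is level of detail; your bookkeeping of the quantifiers under $\perp$ is accurate and fills in what the paper leaves implicit.
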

\begin{proof}
In the definition of a $q$-$\Delta$-matroid, statements \textbf{(F1)} and \textbf{(F2)} are dual statements. That is, \textbf{(F1)} holds for $X,Y$ in $\mathcal{F}$ if and only if \textbf{(F2)} holds for $X^{\perp}, Y^{\perp}$ in $\mathcal{F}^*$.
\end{proof}

So, in the definition we could alternatively have written that \textbf{(F1)} holds both for $X,Y$ in $\mathcal{F}$ and for $X^{\perp}, Y^{\perp}$ in $\mathcal{F}^*$.

As mentioned in the introduction it is not clear (at least so far) that everything that works well for usual $\Delta$-matroids will work well for $q$-$\Delta$-matroids. The duality operation $\Delta \rightarrow \Delta^*$ is a special case of transforming a $\Delta$-matroid $\Delta$, to $\Delta*A$, where the feasible sets are the symmetric differences $A\bigtriangleup X$, for the $X$ appearing in $\mathcal{F}$. This idea of twisting was defined by Bouchet \cite{B3}.
The special case $A=E$ gives the set-theoretical complements $E-X$.  It is easy to find an analogue of this, in form of the orthogonal complements, $X^{\perp}$ as we indeed did in the case of $q$-$\Delta$-matroid, but less obvious how one could find an analogue of $\Delta*A$, for other $A$ than $E$.

\subsection{Examples from $q$-matroids}

Analogous to the classical case, we have the following results, where $q$-matroids directly give a $q$-$\Delta$-matroid, and the other way around. Propositions \ref{qmatroids}, \ref{ind}, \ref{span}, and \ref{extremes} give a complete $q$-analogue of Proposition \ref{basic}. 

\begin{proposition} \label{qmatroids}
If $(E,\mathcal{B})$ is a $q$-matroid, then it also is a $q$-$\Delta$-matroid.
\end{proposition}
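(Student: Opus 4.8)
The plan is to put $\mathcal{F}=\mathcal{B}$ and verify the two axioms \textbf{(F1)} and \textbf{(F2)} directly, exploiting two features of the situation: every member of $\mathcal{B}$ has the same dimension $r:=r(E)$, and \textbf{(F1)} is essentially a restatement of the basis exchange axiom (nB3) from Definition \ref{indep-bases}.

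First I would dispose of \textbf{(F1)}. Given $X,Y\in\mathcal{B}$ and a subspace $A$ of codimension $1$ in $X$ (so $\dim A=r-1$), I would apply (nB3) with $B_1=X$, $B_2=Y$ and this $A$. It produces a codimension-$1$ space $Z\subseteq E$ with $A\subseteq Z$, $Y\not\subseteq Z$, and $A+z\in\mathcal{B}=\mathcal{F}$ for every $1$-dimensional $z\not\subseteq Z$; since $z\not\subseteq Z\supseteq A$ forces $z\cap A=0$, each such $A+z$ has dimension $r$ and can indeed be a basis. This is exactly alternative (i), so \textbf{(F1)} holds verbatim from (nB3). (Alternative (ii) could never apply here anyway, as $Z\cap A$ has dimension $r-1$ or $r-2$ while no member of $\mathcal{B}$ has dimension $\ne r$.)

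The real work is \textbf{(F2)}, and I expect the obstacle to lie there. Given $X,Y\in\mathcal{B}$ and $A\supseteq X$ with $X$ of codimension $1$ in $A$, we have $\dim A=r+1$, so alternative (iv) is dimensionally impossible: $A+z$ always has dimension $\ge r+1>r$ and cannot be a basis. Hence I must produce alternative (iii). Rather than build the required $1$-dimensional $z$ by hand, I would dualize. The observation underlying the proof of Proposition \ref{qdual} is that \textbf{(F1)} holds for a pair $X,Y$ over $\mathcal{F}$ if and only if \textbf{(F2)} holds for $X^{\perp},Y^{\perp}$ over $\mathcal{F}^{\perp}=\{F^{\perp}:F\in\mathcal{F}\}$; equivalently, \textbf{(F2)} holds for all pairs in $\mathcal{B}$ precisely when \textbf{(F1)} holds for all pairs in $\mathcal{B}^{\perp}$.

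The key input is then that $\mathcal{B}^{\perp}=\{B^{\perp}:B\in\mathcal{B}\}$ is exactly the basis family of the dual $q$-matroid $M^{*}$ of Definition \ref{defdual} (the $q$-analogue of the classical basis–dual relationship). Applying the already-verified \textbf{(F1)}, i.e.\ (nB3), to $M^{*}$ gives \textbf{(F1)} for every pair in $\mathcal{B}^{\perp}$, and hence \textbf{(F2)} for every pair in $\mathcal{B}$. Concretely, unwinding the duality, the space demanded by (iii) is $z=W^{\perp}$, where $W$ is the codimension-$1$ space output by (nB3) applied to $B_1=X^{\perp}$, $B_2=Y^{\perp}$, and $A^{\perp}\subseteq X^{\perp}$ in $M^{*}$: the inclusions $A^{\perp}\subseteq W$ and $Y^{\perp}\not\subseteq W$ translate under $\perp$ into $z=W^{\perp}\subseteq A$ and $z\not\subseteq Y$, while $(A^{\perp}+w)^{\perp}=A\cap w^{\perp}$ turns the conclusion $A^{\perp}+w\in\mathcal{B}^{\perp}$ into $A\cap Z\in\mathcal{F}$ for all codimension-$1$ $Z=w^{\perp}$ with $z\not\subseteq Z$. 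The only delicate point is the bookkeeping of orthogonal complements reversing inclusions, together with invoking the basis-duality of $q$-matroids; with these in hand both axioms follow and the pair $(E,\mathcal{B})$ is a $q$-$\Delta$-matroid.
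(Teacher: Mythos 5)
Your proposal is correct and follows essentially the same route as the paper: \textbf{(F1)} is read off directly from (nB3), and \textbf{(F2)} is obtained by passing to the dual $q$-matroid, whose bases are the orthogonal complements of the bases of $M$, and using that \textbf{(F1)} and \textbf{(F2)} are dual statements. Your version merely spells out the orthogonal-complement bookkeeping that the paper leaves implicit.
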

\begin{proof}
Let $X,Y\in\mathcal{B}$ and let $A\subseteq X$ of codimension $1$. Then applying (nB3) gives that \textbf{(F1)} part (i) holds.
The statement of \textbf{(F2)} follows by viewing all pairs $X,Y$ as orthogonal complements of $X^{\perp}$ and  $Y^{\perp}$, and $X^{\perp}$ and $Y^{\perp}$ are two bases for the dual $q$-matroid.
\end{proof}

\begin{proposition} \label{ind}
If $\mathcal{I}$ is the family of independent spaces of a $q$-matroid, then $(E,\mathcal{I})$ is a $q$-$\Delta$-matroid.
\end{proposition}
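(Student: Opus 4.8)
The plan is to verify axioms \textbf{(F1)} and \textbf{(F2)} of Definition \ref{def} directly, using as the main tool the downward-closure property (I2): every subspace of a member of $\mathcal{I}$ again lies in $\mathcal{I}$. This alone disposes of \textbf{(F1)} and of the ``easy half'' of \textbf{(F2)}, leaving a single genuine case that I would handle with the fundamental-circuit Lemma \ref{fundcirc}.

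For \textbf{(F1)}, take $X,Y\in\mathcal{I}$ and $A\subseteq X$ of codimension $1$. Since $A\subseteq X\in\mathcal{I}$, (I2) gives $A\in\mathcal{I}$, and for \emph{any} codimension-$1$ subspace $Z\subseteq E$ the intersection $Z\cap A$ is a subspace of $A$, hence independent. Thus alternative (ii) holds trivially (such a $Z$ exists because $\dim E\geq\dim X\geq 1$), and \textbf{(F1)} needs nothing more. For \textbf{(F2)} I would split on whether $A$ is independent. If $A\in\mathcal{I}$, then choosing any $1$-dimensional $z\subseteq A$ gives $A+z=A\in\mathcal{I}$, so alternative (iv) holds; note that (iv) does not demand $z\not\subseteq A$, which is what makes this case immediate.

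The only substantive case is $A\notin\mathcal{I}$, and this is where I expect the work to be. Here $A$ is dependent while $X\subseteq A$ is independent of codimension $1$, so Lemma \ref{fundcirc} furnishes a \emph{unique} circuit $C=C(X,A)$ contained in $A$. The two consequences I would extract are: (a) since $Y\in\mathcal{I}$ is independent it cannot contain the dependent space $C$, so $C\not\subseteq Y$; and (b) because $C$ is the only circuit inside $A$, a subspace $W\subseteq A$ is independent precisely when $C\not\subseteq W$. Using (a), I pick a $1$-dimensional $z\subseteq C$ with $z\not\subseteq Y$; then automatically $z\subseteq A$ as required by (iii). For any codimension-$1$ subspace $Z$ with $z\not\subseteq Z$ we get $C\not\subseteq Z$ (since $z\subseteq C$), hence $C\not\subseteq A\cap Z$, and by (b) the space $A\cap Z$ is independent, i.e. lies in $\mathcal{I}$. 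This verifies alternative (iii) and completes \textbf{(F2)}.

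The main obstacle is precisely this dependent case: one must control the universal quantifier over all codimension-$1$ spaces $Z$ appearing in (iii), and the clean way to do so is to force $z$ to sit inside the fundamental circuit $C$, so that $z\not\subseteq Z$ propagates to $C\not\subseteq Z$. An alternative would be to deduce \textbf{(F2)} from \textbf{(F1)} via the duality of Proposition \ref{qdual}, identifying $\mathcal{I}^{*}=\{I^{\perp}:I\in\mathcal{I}\}$ with the spanning spaces of the dual $q$-matroid; but since the spanning-space case is only treated afterwards, I prefer the self-contained argument above.
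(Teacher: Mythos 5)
Your proof is correct and follows essentially the same route as the paper: alternative (ii) of \textbf{(F1)} holds for every hyperplane $Z$ by the downward-closure axiom (I2), and alternative (iii) of \textbf{(F2)} is secured in the dependent case by placing $z$ inside the unique fundamental circuit $C(X,A)$ of Lemma \ref{fundcirc} and observing that a hyperplane missing $z$ cannot capture $C$. The only (harmless) divergence is that when $A$ is independent you invoke (iv) with $z\subseteq A$ rather than (iii); this is if anything slightly cleaner, since (iii) would require a $1$-dimensional $z\subseteq A$ with $z\not\subseteq Y$, which need not exist when $A\subseteq Y$.
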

\begin{proof}
For \textbf{(F1)} we will see that (ii) always holds.
Let $Z$ be any codimension 1 space in $E$. Then $Z \cap A$ is independent since $A \subseteq X$ is independent by axiom (I2) of Definition \ref{indep-axioms}.

For \textbf{(F2)} we will prove that (iii) always holds. Let $A$ be a space containing $X$ of codimension one. If $A$ is independent, then $A \cap Z$ is independent for every $Z$, so (iii) holds. So assume $A$ is dependent. By Lemma \ref{fundcirc} there is a unique circuit $C=C(X,A)$ in $A$ (obviously contained in neither $X$ nor $Y$). Let $z$ be contained in $C-Y$. If $Z$ is a codimension 1 space not containing $z$, then it intersects $A$ in a space that must be independent: if it was dependent, it would contain a circuit, and this a different circuit than $C$, since it does not contain $z$. This contradicts the property of a unique circuit $C(X,A)$. Hence (iii) holds.
\end{proof}

\begin{proposition} \label{span}
The pair $(E,\mathcal{S})$ is a $q$-$\Delta$-matroid, for $\mathcal{S}$ the set of spanning spaces for a $q$-matroid with ground space $E$.
\end{proposition}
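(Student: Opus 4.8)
The plan is to mimic the classical argument, where statement (3) of Proposition \ref{basic} was deduced from statement (2) together with duality. The two $q$-analogues I need are already available: Proposition \ref{ind} says that the independent spaces of a $q$-matroid form a $q$-$\Delta$-matroid, and Proposition \ref{qdual} says that the orthogonal-complement family $\mathcal{F}^*=\{F^\perp:F\in\mathcal{F}\}$ of a $q$-$\Delta$-matroid is again a $q$-$\Delta$-matroid. So it suffices to exhibit $\mathcal{S}$ as the orthogonal-complement family of the independent spaces of a suitable $q$-matroid.

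The natural candidate is the dual $q$-matroid $M^*$. First I would record the duality between the spanning spaces of $M$ and the independent spaces of $M^*$. Using the dual rank function $r^*(A)=\dim(A)-r(E)+r(A^\perp)$ from Definition \ref{defdual}, a subspace $I$ is independent in $M^*$ — that is, $r^*(I)=\dim I$ — precisely when $r(I^\perp)=r(E)$, i.e. precisely when $I^\perp$ is a spanning space of $M$. Equivalently, $S$ is spanning in $M$ if and only if $S^\perp$ is independent in $M^*$. Writing $\mathcal{I}^*$ for the family of independent spaces of $M^*$, this gives the identification $\mathcal{S}=\{I^\perp:I\in\mathcal{I}^*\}$.

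With this identification the proof closes quickly. By Proposition \ref{ind} applied to the $q$-matroid $M^*$, the pair $(E,\mathcal{I}^*)$ is a $q$-$\Delta$-matroid. Its dual, in the sense of Proposition \ref{qdual}, is the pair $(E,\{I^\perp:I\in\mathcal{I}^*\})=(E,\mathcal{S})$, and Proposition \ref{qdual} guarantees that this dual is again a $q$-$\Delta$-matroid. This is exactly the assertion.

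The only genuine content is the duality computation in the second step; everything else is a formal invocation of the two previous propositions. I expect that step to be routine, being a one-line manipulation of the dual rank function, so the main thing to be careful about is stating the spanning/independent duality in the correct direction — namely that the orthogonal complements of the independent spaces of $M^*$ are the spanning spaces of $M$, and not the reverse — so that the family produced by Proposition \ref{qdual} matches $\mathcal{S}$ exactly.
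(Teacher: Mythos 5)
Your proposal is correct and follows exactly the paper's own route: the paper likewise deduces the result from Propositions \ref{qdual} and \ref{ind} via the identification of the spanning spaces of $M$ with the orthogonal complements of the independent spaces of $M^*$ (a fact the paper cites as well known and you verify directly from the dual rank function). No issues.
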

\begin{proof}
It is well known that the family of the spanning spaces of a $q$-matroid is the set of orthogonal complements of the independent spaces of the dual $q$-matroid (see Definition \ref{defdual}). Hence the result follows from Propositions \ref{qdual}
and \ref{ind}.
\end{proof}

\begin{proposition} \label{extremes}
Let $\Delta=(E,\mathcal{F})$ be a $q$-$\Delta$-matroid.
Let $\mathcal{F}_U$ be the set of those elements of $\mathcal{F}$ that have the maximum dimension among the elements of $\mathcal{F}$, and let $\mathcal{F}_L$ be the set of those elements of $\mathcal{F}$ that have the minimum dimension among the elements of $\mathcal{F}$. Then $\Delta_U=(E,\mathcal{F}_U)$ and $\Delta_L=(E,\mathcal{F}_L)$ are both $q$-matroids (with $\mathcal{F}_U$ and $\mathcal{F}_L$ as families of bases, respectively). These matroids are called the \emph{upper} and the \emph{lower} $q$-matroid of $\Delta$, respectively.
\end{proposition}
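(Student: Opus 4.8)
The plan is to verify that $\mathcal{F}_L$ satisfies the basis axioms (B1), (B2), (nB3) of Definition \ref{indep-bases}, and then to obtain the statement for $\mathcal{F}_U$ by duality. Throughout, write $d_L$ for the minimum and $d_U$ for the maximum dimension occurring among the members of $\mathcal{F}$; since $\mathcal{F}\neq\emptyset$, both values are attained.

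Axioms (B1) and (B2) are immediate for $\mathcal{F}_L$: it is non-empty because $d_L$ is attained, and any two of its members have the same dimension $d_L$, so neither can properly contain the other. The heart of the matter is (nB3). I would take arbitrary $B_1,B_2\in\mathcal{F}_L$ and a subspace $A$ of codimension $1$ in $B_1$, so that $\dim A=d_L-1$, and apply \textbf{(F1)} to the pair $X=B_1$, $Y=B_2$ (both lie in $\mathcal{F}$) together with this $A$. The axiom offers alternative (i) or alternative (ii). In alternative (ii) one obtains a codimension $1$ space $Z$ with $Z\cap A\in\mathcal{F}$; but $\dim(Z\cap A)\le\dim A=d_L-1<d_L$, contradicting the minimality of $d_L$. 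Hence (ii) cannot occur, and alternative (i) must hold: there is a codimension $1$ space $Z\subseteq E$ with $A\subseteq Z$, $B_2\not\subseteq Z$, and $A+z\in\mathcal{F}$ for every $1$-dimensional $z\not\subseteq Z$. For such a $z$ we have $z\not\subseteq A$, so $\dim(A+z)=d_L$ and therefore $A+z\in\mathcal{F}_L$. Reading off $Z$ as the required hyperplane, this is exactly (nB3) for $\mathcal{F}_L$. Thus $(E,\mathcal{F}_L)$ is a collection of bases and hence a $q$-matroid.

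For the upper $q$-matroid I would argue by duality rather than repeat the computation, because the analogue of the step above fails for $\mathcal{F}_U$: both alternatives of \textbf{(F1)} either preserve the dimension (in (i)) or decrease it (in (ii)), so maximality excludes nothing. Instead, pass to the dual $q$-$\Delta$-matroid $\Delta^*=(E,\mathcal{F}^*)$ of Proposition \ref{qdual}. Since $\dim F^{\perp}=n-\dim F$, minimising dimension in $\mathcal{F}^*$ is the same as maximising it in $\mathcal{F}$, whence $(\mathcal{F}^*)_L=\{F^{\perp}:F\in\mathcal{F}_U\}$. By the case already proved, $(E,(\mathcal{F}^*)_L)$ is a $q$-matroid with this family as its set of bases. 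Its dual $q$-matroid (Definition \ref{defdual}) is again a $q$-matroid, and its bases are the orthogonal complements of the bases of $(E,(\mathcal{F}^*)_L)$ (a standard consequence of the rank formula in Definition \ref{defdual}), namely $\{(F^{\perp})^{\perp}:F\in\mathcal{F}_U\}=\mathcal{F}_U$. Therefore $(E,\mathcal{F}_U)$ is a $q$-matroid with $\mathcal{F}_U$ as its bases.

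The main obstacle is the dichotomy built into \textbf{(F1)}: the axiom only promises alternative (i) or alternative (ii), while (nB3) corresponds exactly to (i). The observation that unlocks the argument is that the ``wrong'' alternative (ii) always outputs a feasible space of dimension strictly below $\dim A$, so on the minimal layer it is ruled out by minimality; the maximal layer cannot be treated symmetrically (nothing in \textbf{(F1)} ever raises the dimension), and is instead handled by transporting the minimal case through the duality of Proposition \ref{qdual} and Definition \ref{defdual}.
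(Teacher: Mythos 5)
Your proof is correct and follows essentially the same route as the paper: for $\mathcal{F}_L$ you rule out alternative (ii) of \textbf{(F1)} by minimality and read (i) as (nB3), exactly as the paper does, and for $\mathcal{F}_U$ you reduce to the minimal case via the dual $q$-$\Delta$-matroid, whereas the paper invokes \textbf{(F2)} directly (ruling out (iv) by maximality and identifying (iii) with (nB3) for the orthogonal complements) --- but that is the same duality argument in different packaging. Your write-up is in fact slightly more careful, e.g.\ in checking that the spaces $A+z$ produced by (i) actually land in $\mathcal{F}_L$ and not merely in $\mathcal{F}$.
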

\begin{proof}
Assume that both $X$ and $Y$ have minimal dimension. Then (ii) cannot hold. Since \textbf{(F1)} holds, we see that (i) holds. This is (nB3).
Since (B1) and (B2) obviously hold, these spaces of minimal dimension are the bases of a $q$-matroid.

Assume that that both $X$ and $Y$ have maximal dimension. Then (iv) cannot hold.
Since \textbf{(F2)} holds, we must have that (iii) holds. Statement (iii) is statement (i) for $X^{\perp}$ and $Y^{\perp}$ with respect to $\mathcal{F}^*$. Hence it is (nB3) for these spaces. Again (B1) and (B2) obviously hold for them. Hence the orthogonal complements of our spaces constitute the bases of a matroid. Therefore our original spaces do so too. 
\end{proof}

The above makes clear that (i) and (iii) are modeled after the basis exchange axiom (nB3), as in the classical case.

\subsection{$q$-$\Delta$-matroids and restriction: some examples}  

In this section we discuss examples of $q$-$\Delta$-matroids. We use them to show that a straightforward definition of contraction of a $q$-$\Delta$-matroid does not work.

\begin{proposition} \label{ex1}
Let $E=\mathbb{F}^4$ and let $\mathcal{F}$ be a family of subspaces of $E$ consisting of $\{0\},E$, and a family $\mathcal{D}$ of
$2$-dimensional spaces. We then have that $\Delta=(E,\mathcal{F})$ is a $q$-$\Delta$-matroid if and only if every $1$-dimensional subspace of $E$ is contained in some element of $\mathcal{D}$, and every $3$-dimensional subspace of $E$ contains an element of $\mathcal{D}$.
\end{proposition}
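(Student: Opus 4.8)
The plan is to prove both directions by carefully unpacking the axioms \textbf{(F1)} and \textbf{(F2)} for the specific family $\mathcal{F} = \{\{0\}, E\} \cup \mathcal{D}$, and using the duality built into Proposition \ref{qdual} to halve the work. First I would observe that $\mathcal{F}$ is self-dual in structure: since $\{0\}^\perp = E$, $E^\perp = \{0\}$, and orthogonal complementation sends $2$-dimensional spaces to $2$-dimensional spaces in $\mathbb{F}^4$, the family $\mathcal{F}^* = \{F^\perp : F \in \mathcal{F}\}$ has the same shape, with $\mathcal{D}^* = \{D^\perp : D \in \mathcal{D}\}$. The two geometric conditions in the statement are also swapped under this duality: ``every $1$-dimensional space is contained in some element of $\mathcal{D}$'' becomes ``every $3$-dimensional space contains an element of $\mathcal{D}^*$'' and vice versa. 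So by Proposition \ref{qdual} it suffices to analyze when \textbf{(F1)} holds, and the condition governing \textbf{(F2)} will follow by dualizing.

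Next I would do the case analysis for \textbf{(F1)} by running over the possible pairs $(X,Y)$ and codimension-$1$ subspaces $A \subseteq X$. The only interesting situation is when $\dim X = 2$, i.e. $X \in \mathcal{D}$, so that $A$ is $1$-dimensional; the cases $X = \{0\}$ (no such $A$) and $X = E$ (where $A$ is a hyperplane, and one checks (ii) holds trivially by taking $Z = A$ since $\{0\} = Z \cap A$ is feasible, or similar) should be quick. With $A$ a $1$-dimensional space, part (i) asks for a hyperplane $Z \supseteq A$ avoiding $Y$ so that all $A + z$ with $z \not\subseteq Z$ lie in $\mathcal{F}$; these sums $A+z$ are $2$-dimensional, so (i) essentially demands that a whole pencil of $2$-spaces through $A$ be feasible, which is very restrictive and generally fails. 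Part (ii) asks for a hyperplane $Z$ with $Z \cap A \in \mathcal{F}$; since $A$ is $1$-dimensional, $Z \cap A$ is either $\{0\}$ (when $A \not\subseteq Z$) or $A$ itself. Because $\{0\} \in \mathcal{F}$, one can simply choose any hyperplane $Z$ not containing $A$, making $Z \cap A = \{0\}$ feasible, so (ii) holds automatically.

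This last point is the crux and also the main obstacle to watch: I expect that \textbf{(F1)} is in fact \emph{always} satisfied for this family via part (ii), precisely because $\{0\} \in \mathcal{F}$ provides a free feasible intersection. If that is correct, then \textbf{(F1)} imposes no condition at all, and by duality \textbf{(F2)} would also be automatic, which would contradict the proposition having nontrivial content. So the delicate part is to pin down exactly which clause forces the geometric conditions. I suspect the intended reading requires the produced feasible space in the relevant clause to have the ``correct'' dimension matching the classical symmetric-difference heuristic described after Definition \ref{def} --- that is, one is really asking in \textbf{(F1)} for the clause that keeps the dimension equal to $\dim X = 2$ (clause (i), a $2$-space) rather than dropping to clause (ii) trivially. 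The real work is therefore to show: (a) clause (i) for a pair $X, Y \in \mathcal{D}$ and $1$-dimensional $A \subseteq X$ can be satisfied exactly when the relevant $1$-dimensional spaces extend to feasible $2$-spaces, which ties to ``every $1$-dimensional subspace is contained in some element of $\mathcal{D}$''; and (b) the dual clause (iii) in \textbf{(F2)}, producing a $2$-space as $A \cap Z$ from a $3$-dimensional $A \supseteq X$, forces ``every $3$-dimensional subspace contains an element of $\mathcal{D}$''.

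For the forward direction I would then assume $\Delta$ is a $q$-$\Delta$-matroid and derive the two covering conditions by feeding in, for each $1$-dimensional $\ell$, a pair $X,Y \in \mathcal{D}$ together with a codimension-$1$ $A$ chosen so that the only escape in \textbf{(F1)} is clause (i), forcing some feasible $2$-space through $\ell$; dually, for each $3$-dimensional $H$, I would choose $A = H$ with $X \subseteq H$ in $\mathcal{D}$ so that \textbf{(F2)} forces clause (iii), yielding a feasible $2$-space inside $H$, i.e. an element of $\mathcal{D}$ contained in $H$. For the converse, assuming both covering conditions, I would verify \textbf{(F1)} and \textbf{(F2)} hold for every admissible pair by explicitly exhibiting the required $Z$ or $z$: the $1$-space-covering condition supplies the feasible $2$-spaces needed to build the hyperplane $Z$ in clause (i), and the $3$-space-covering condition (equivalently, by duality, the $1$-space-covering for $\mathcal{D}^*$) supplies what is needed for clause (iii). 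Throughout, the main obstacle is the bookkeeping of which clause is forced in each configuration and confirming that the trivial escapes via $\{0\} \in \mathcal{F}$ or $E \in \mathcal{F}$ do not collapse the content --- so I would be careful to state precisely, perhaps in a preliminary remark, why the relevant clause cannot be dodged for the pairs I select.
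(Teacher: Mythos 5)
There is a genuine gap, and it sits exactly where you flagged your own unease. Your quick dismissal of the case $X=E$ in \textbf{(F1)} is wrong: there you take $A$ to be a $3$-dimensional hyperplane, and for any codimension-$1$ space $Z$ the intersection $Z\cap A$ has dimension $3$ (if $Z=A$) or $2$ (if $Z\neq A$) --- it is never $\{0\}$, so the ``free escape'' via $\{0\}\in\mathcal{F}$ is not available. This is precisely where one of the two geometric conditions comes from: taking $Y=\{0\}$, clause (i) fails (it would need a hyperplane $Z\supseteq A$, hence $Z=A$, with $\{0\}\not\subseteq Z$), so clause (ii) must hold, and since every $2$-dimensional subspace of $A$ is of the form $Z\cap A$, this forces every $3$-dimensional subspace of $E$ to contain an element of $\mathcal{D}$. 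Dually, \textbf{(F2)} with $X=\{0\}$, $Y=E$ and $A$ one-dimensional forces clause (iv), i.e.\ a feasible $2$-space $A+z$ through $A$, giving the other condition. Having missed this, you concluded that both axioms were automatic, and then proposed to repair the statement by reinterpreting the definition so that the ``correct dimension'' clause is mandatory. That reinterpretation is not what the definition says and is not needed; worse, your plan for the necessity direction (choosing pairs $X,Y\in\mathcal{D}$ and forcing clause (i)) cannot work, because for $X\in\mathcal{D}$ and $A$ one-dimensional clause (ii) really does hold trivially via $Z\cap A=\{0\}$, so such pairs impose no condition at all.

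The paper's proof is the straightforward exhaustive check over all nine pairs $(\dim X,\dim Y)$, and the content is concentrated in the pairs involving $E$ and $\{0\}$: for $(\dim X,\dim Y)=(4,0)$ and $(4,2)$ axiom \textbf{(F1)} reduces to the $3$-space condition, for $(0,2)$ and $(0,4)$ axiom \textbf{(F2)} reduces to the $1$-space condition, and all remaining cases are vacuous or trivially satisfied. Your idea of exploiting the self-duality of $\mathcal{F}$ under $\perp$ (via Proposition \ref{qdual}) to treat only \textbf{(F1)} and deduce \textbf{(F2)} is sound and would genuinely halve the bookkeeping relative to the paper; it is worth keeping. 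But the case analysis itself must be redone with the correct identification of which pairs force which clause.
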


\begin{proof}
Let $X,Y \in \mathcal{F}$ ($X=Y$ is possible).
We have the following possibilities for $(\dim X, \dim Y)$: $(0,0),(0,2),(0,4),(2,0),(2,2),(2,4),(4,0),(4,2),(4,4)$.
 
For $(0,0),(0,2),(0,4)$  the condition \textbf{(F1)} is empty. For $(0,0)$ the condition \textbf{(F2)} always holds via (iii). For $(0,2)$ \textbf{(F2)}  holds via (iv) if (and only if) every one-dimensional subspace of $E$ is contained in some element of $\mathcal{D},$ and for  $(0,4)$ (iii) never holds, but \textbf{(F2)} always holds via (iv) if and only if and only if every one-dimensional subspace of $E$ is contained in some element of $\mathcal{D}.$  All in all \textbf{(F1)} and \textbf{(F2)} hold in all these three cases if and only if and only if every one-dimensional subspace of $E$ is contained in some element of $\mathcal{D}.$

For $(2,0),(2,2),(2,4)$ both \textbf{(F1)} and \textbf{(F2)} hold via (ii) and (iv), respectively.
 
For $(4,0),(4,2),(4,4)$ the condition \textbf{(F2)} is empty. For $(4,0)$ (i) never holds, but \textbf{(F1)} always holds via (ii) if and only if and only if every three-dimensional subspace of $E$ contains some element of $\mathcal{D}$. For $(4,2)$ (ii) holds if (and only)if and only if every three-dimensional subspace of $E$ contains some element of $\mathcal{D}$.

For $(4,4)$ the condition \textbf{(F1)} always holds via (i).
All in all \textbf{(F1)} and \textbf{(F2)} hold in all these three cases if and only if and only if every three-dimensional subspace of $E$ contains some element of $\mathcal{D}$
\end{proof}

Using the construction above, we give an example of a $q$-$\Delta$-matroid that does not come from a $q$-matroid.

\begin{example}\label{qdeltaex}
Let $E=\mathbb{F}^4$ and let $\mathcal{S}$ be a spread in $E$. That is, $\mathcal{S}$ is a collection of $2$-dimensional subspaces of $E$ such that every $1$-dimensional subspace of $E$ is contained in exactly one spread element. (To construct this, one could take for example the well-known geometric construction of a Desarguesian spread.) Since the orthogonal complements of spread elements in $\mathbb{F}^4$ form again a spread, we also have that every $3$-dimensional subspace of $E$ contains exactly one spread element.
Let $\mathcal{F}=\{\{0\},E\}\cup\mathcal{S}$. This is a $q$-$\delta$-matroid by the previous Proposition \ref{ex1}.
\end{example}

There are several remarks to be made about the previous example. First, it can be seen as a $q$-analogue of the $\Delta$-matroid with $E=\{1,2,3,4\}$ and $\mathcal{F}=\{\emptyset,\{1,2\},\{3,4\},\{1,2,3,4\}\}$, of which the property for feasible sets is easily checked. Secondly, we see that the the collection of $2$-dimensional spaces in $\mathcal{F}$, that is the spread $\mathcal{S}$, do not form the collection of bases of a $q$-matroid: axiom (nB3) does not hold for spread elements. The upper and lower $q$-matroid of the example are $U(4,4)$ and $U(0,4)$, respectively.

We wish to define restriction for a $q$-$\Delta$-matroid. Using the example above, we show that some seemingly straightforward definitions do not work.

\begin{proposition}
Let $(E,\mathcal{F})$ be a $q$-$\Delta$-matroid and let $T\subseteq E$ be a codimension $1$ space containing at least one element of $\mathcal{F}$. Define the family $\mathcal{F}'=\{F\in\mathcal{F}:F\subseteq T\}$. Then $(T,\mathcal{F}')$ is not necessarily a $q$-$\Delta$-matroid.
\end{proposition}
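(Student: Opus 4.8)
The statement is a ``not necessarily'' claim, so the plan is to produce a single counterexample, and the natural candidate is the spread $q$-$\Delta$-matroid of Example \ref{qdeltaex}. First I would set $E=\mathbb{F}^4$, let $\mathcal{S}$ be a spread, and take $\mathcal{F}=\{\{0\},E\}\cup\mathcal{S}$, which is a $q$-$\Delta$-matroid by Proposition \ref{ex1}. I would then choose $T$ to be any codimension $1$ (that is, $3$-dimensional) subspace of $E$. Since $\{0\}\subseteq T$ the hypothesis that $T$ contains a member of $\mathcal{F}$ is satisfied; in fact, because $(E,\mathcal{F})$ is a $q$-$\Delta$-matroid, Proposition \ref{ex1} guarantees $T$ contains a spread element as well.

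The first genuine step is to pin down $\mathcal{F}'$ exactly. I would argue that $T$ contains \emph{exactly one} spread element $S_0$: this uses the defining property of a spread, namely that two distinct spread elements meet only in $\{0\}$, so their sum would be $4$-dimensional and cannot fit inside the $3$-dimensional space $T$. Since $E\not\subseteq T$ for dimension reasons, I conclude $\mathcal{F}'=\{\{0\},S_0\}$ with $\dim S_0=2$. The core of the argument is then to show that $(T,\mathcal{F}')$, now regarded as a candidate $q$-$\Delta$-matroid with ground space $T=\mathbb{F}^3$, violates axiom \textbf{(F2)}. I would take $X=S_0$ and $Y=\{0\}$. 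The only subspace $A\subseteq T$ in which $X$ has codimension $1$ is $A=T$ itself. For this $A$ both alternatives fail: option (iv) asks for a $1$-dimensional $z$ with $A+z\in\mathcal{F}'$, but $A+z=T$ is $3$-dimensional and $T\notin\mathcal{F}'$; option (iii) asks for a $1$-dimensional $z\subseteq T$ (the condition $z\not\subseteq Y$ being automatic) such that $A\cap Z=Z\in\mathcal{F}'$ for every codimension $1$ subspace $Z$ transverse to $z$, which would force every such $2$-dimensional $Z$ to equal $S_0$. This is impossible, since there are many $2$-dimensional subspaces of $T$ avoiding a fixed $z$ and only one of them is $S_0$. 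Hence \textbf{(F2)} fails and $(T,\mathcal{F}')$ is not a $q$-$\Delta$-matroid.

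The main obstacle I anticipate is the bookkeeping in identifying $\mathcal{F}'$: the reduction to just two spaces rests on the ``exactly one spread element'' computation, and one must be careful that, after restriction, all occurrences of ``codimension $1$'' and all ambient subspaces are interpreted relative to the new ground space $T$ rather than $E$. Once $\mathcal{F}'$ is identified, the failure of \textbf{(F2)} is essentially forced; the only point to watch is confirming that no choice of $z$ rescues (iii), which follows immediately from the dimension count.
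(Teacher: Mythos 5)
Your proposal is correct and follows essentially the same route as the paper: restrict the spread example of Example \ref{qdeltaex} to a $3$-dimensional $T$, observe that $\mathcal{F}'=\{\{0\},S_0\}$ for the unique spread element $S_0\subseteq T$, and show \textbf{(F2)} fails at $A=T$ because (iv) gives $A+z=T\notin\mathcal{F}'$ and (iii) would require every $2$-dimensional $Z\subseteq T$ avoiding $z$ to equal $S_0$. The only difference is that you spell out the ``exactly one spread element'' count and the choice $Y=\{0\}$ explicitly, which the paper leaves implicit.
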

\begin{proof}
Consider the $q$-$\Delta$-matroid of Example \ref{qdeltaex}. Let $T\subseteq E$ be a subspace of codimension $1$, and so of dimension $3$. From the definition of $\mathcal{S}$,  it follows that there is exactly one element $S$ of $\mathcal{S}$ contained in $T$, so $\mathcal{F}'=\{\{0\},S\}$. We show that \textbf{(F2)} does not hold for $\mathcal{F}'$. Let $A$ be a subspace that contains $S$ of codimension $1$, that is, $A=T$. First we check (iii). For any $z\subseteq A$ and $Z\subseteq T$ of codimension $1$ not containing $z$, we need that $A\cap Z\in\mathcal{F}'$. This can not happen, since $A\cap Z$ has dimension $2$ and there is only one element of $\mathcal{F}'$ of dimension $2$, that is $S$. So (iv) needs to hold. But this can also not happen, since for any choice of $z$ we have that $A+z=A=T$, which is not in $\mathcal{F}'$. Hence $\mathcal{F}'$ is not the family of feasible spaces of a $q$-$\Delta$-matroid.
\end{proof}

Below we give a slightly less straightforward attempt for restriction, but this also does not give a $q$-$\Delta$-matroid.

\begin{proposition}
Let $(E,\mathcal{F})$ be a $q$-$\Delta$-matroid and let $T\subseteq E$ be a codimension $1$ space containing at least one element of $\mathcal{F}$. Define the family $\mathcal{F}'=\{F\cap T:F\in\mathcal{F}\}$. Then $(T,\mathcal{F}')$ is not necessarily a $q$-$\Delta$-matroid.
\end{proposition}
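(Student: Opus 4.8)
The statement is a non-existence (``not necessarily''), so the plan is to exhibit a single explicit $q$-$\Delta$-matroid $(E,\mathcal{F})$ and a hyperplane $T$ for which $\mathcal{F}'=\{F\cap T:F\in\mathcal{F}\}$ violates \textbf{(F1)} or \textbf{(F2)}. Before hunting for one, it is worth recording why such a counterexample must be a genuinely ``$q$'' phenomenon: the classical analogue is \emph{always} well behaved. For an ordinary $\Delta$-matroid $(E,\mathcal{F})$ and $e\in E$, the projected family $\{F\setminus\{e\}:F\in\mathcal{F}\}$ is again a $\Delta$-matroid, by a one-line symmetric-exchange argument: given $X',Y'$ in the image and $x$ in $X'\bigtriangleup Y'$, lift to $X,Y\in\mathcal{F}$, apply the $\Delta$-matroid axiom to obtain $y$, and split into the case $y\neq e$ (where $X'\bigtriangleup\{x,y\}$ is feasible) and $y=e$ (where $y'=x$ works, since $(X\bigtriangleup\{x,e\})\setminus\{e\}=X'\bigtriangleup\{x\}$). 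Hence the failure cannot imitate a classical obstruction; it must exploit the universal clauses ``for all $z\not\subseteq Z$'' in (i) and ``for all codimension-$1$ $Z$'' in (iii), inherited from (nB3), which do not descend through intersection with $T$.

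Guided by this, I would target a failure of \textbf{(F2)}. The plan is to fix a candidate $\mathcal{F}$ with $E\notin\mathcal{F}$ and a hyperplane $T$ with $T\notin\mathcal{F}$, so that $T\notin\mathcal{F}'$. This is the crucial difference from the previous proposition, and it is also why the spread of Example \ref{qdeltaex} does \emph{not} serve here: there $E\in\mathcal{F}$ forces $E\cap T=T\in\mathcal{F}'$, and then branch (iv) rescues \textbf{(F2)} whenever $A$ is a hyperplane of $T$. With $T\notin\mathcal{F}'$, I would seek a $2$-dimensional $A\subseteq T$ with $A\notin\mathcal{F}'$ that contains a feasible line $X$ (so $X$ has codimension $1$ in $A$) but at least two \emph{infeasible} lines. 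For such an $(X,A)$, branch (iv) fails automatically, since $A+z$ is either $A$ or $T$, neither of which lies in $\mathcal{F}'$; and branch (iii) fails as well, because as $Z$ ranges over the codimension-$1$ subspaces of $T$ avoiding a chosen line $z\subseteq A$, the intersection $A\cap Z$ runs through \emph{every} line of $A$ other than $z$, so (iii) would demand that all but one line of $A$ be feasible—impossible once $A$ has two infeasible lines. Thus \textbf{(F2)} is violated for this $X$, for any $Y$, and this $A$. (A dual route makes the top-dimensional layer of $\mathcal{F}'$ fail to be the bases of a $q$-matroid and concludes via the contrapositive of Proposition \ref{extremes}.)

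The main obstacle is producing a concrete $\mathcal{F}$ that is simultaneously a bona fide $q$-$\Delta$-matroid and realizes this configuration, and the difficulty is genuine. If $\mathcal{F}$ is equidimensional—the bases of a single $q$-matroid $M$—then $\{B\cap T:B\in\mathcal{F}\}$ is essentially the bases of a restriction of $M$ and stays well behaved; moreover the infeasible lines of the projected family then form the lines of one subspace (a loop space), so no plane can collect two of them while retaining a feasible line. The two required infeasible lines must therefore arise \emph{incoherently}: one as the $T$-trace of a feasible space from one layer of $\mathcal{F}$, the other from a different layer. I would accordingly use an $\mathcal{F}$ of genuinely mixed feasible dimensions—for instance a suitable $q$-$g$-matroid from Section \ref{four}, or a direct amalgam of two $q$-matroids of different ranks sharing prescribed loop directions—with $T$ chosen to contain those directions. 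Checking that such an amalgam satisfies all of \textbf{(F1)} and \textbf{(F2)} upstairs, while its projection exhibits the plane with two infeasible lines, is the laborious core of the argument.
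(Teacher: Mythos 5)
Your proposal is not a proof: it is a strategy for locating a counterexample, and it stops exactly where the proof has to happen. The statement is existential (``not necessarily''), so nothing is established until a concrete $q$-$\Delta$-matroid $(E,\mathcal{F})$ and hyperplane $T$ are exhibited and one of \textbf{(F1)}, \textbf{(F2)} is checked to fail for $(T,\mathcal{F}')$. You candidly flag the construction of a suitable $\mathcal{F}$ of mixed feasible dimensions, together with the verification of \textbf{(F1)} and \textbf{(F2)} upstairs, as ``the laborious core of the argument'' --- but that core is absent, so the proposal cannot be accepted as it stands. The surrounding analysis (why clause (iii) would force all but one line of a plane $A$ to be feasible, why clause (iv) dies when neither $A$ nor $T$ lies in $\mathcal{F}'$) is sound and would be useful once a witness is in hand, but by itself it proves nothing.

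For comparison: the paper's own proof simply reuses the spread example (Example \ref{qdeltaex}), records $\mathcal{F}'=\{\{0\},S\}\cup\{\langle x\rangle\subseteq T: x\notin S\}$, and points to the failure of \textbf{(F2)} for $X=S$, $A=T$ established in the preceding proposition. Your objection to that example appears to be correct and is worth raising: since $E\in\mathcal{F}$, the family $\mathcal{F}'=\{F\cap T:F\in\mathcal{F}\}$ also contains $E\cap T=T$, which the paper's list omits. With $T\in\mathcal{F}'$, clause (iv) of \textbf{(F2)} can always be satisfied (taking $z$ so that $A+z$ equals $T$, $S$, or $A$ as appropriate), and clause (ii) of \textbf{(F1)} is easily checked for every case as well, so $(T,\mathcal{F}')$ seems to be a $q$-$\Delta$-matroid after all and the paper's verification does not go through as written. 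In short: you have correctly diagnosed why this particular example cannot witness the proposition, but you have not supplied one that does. Until an explicit example with $T\notin\mathcal{F}'$ realizing your ``plane with two infeasible lines'' configuration is written down and fully checked against Definition \ref{def}, the claim remains unproved by your argument (and, it would appear, by the paper's).
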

\begin{proof}
We take the same Example \ref{qdeltaex} as in the previous proposition. For any $F\in\mathcal{S}$ the intersection $F\cap T$ has dimension $1$ or $2$. Let $S\in\mathcal{S}$ be the unique spread element contained in $T$. Now we have that $\mathcal{F}'=\{\{0\},S, T\}\cup\{\langle x\rangle \subseteq T:x\not\in S\}$. Then for $S$ and $\{0\}$ property \textbf{(F2)} fails, as explained in the previous proof.
\end{proof}

These examples show that it is difficult to define restriction and contraction for $q$-$\Delta$-matroids in a way analogous to the corresponding definitions for $\Delta$-matroids, as given in Definitions \ref{deltarestriction} and \ref{deltacontraction}.

\section{A $q$-analogue of $g$-matroids and weak $g$-matroids} \label{four}

In the classical case, $\Delta$-matroids are related to several other objects, such as $g$-matroids and objects satisfying a variation of the axioms \textbf{(F1)} and \textbf{(F2)}. This section makes a $q$-analogue of these relations. An overview will be given in a diagram at the end of this section.

\subsection{$q$-$g$-matroids}

The concept of $g$-matroids is due to Tardos \cite{T}. It was later studied by Bouchet in \cite{B2,B3}. We give $q$-analogues of various definitions, leading to the definition of weak and strong $q$-$g$-matroids.

\begin{definition} \label{strong1}
Given a pair of $q$-matroids $M_1$ and $M_2$ with the same ground space $E$  such that any basis of $M_2$ is contained in a basis of $M_1$, and any basis of $M_1$ contains a basis of $M_2$.
The subspace system defined by such a pair of $q$-matroids is the set $\mathcal{F}$ of subspaces $F$ of $E$, such that there exists a basis $B_2$ of $M_2$, and a basis $B_1$ of $M_1$, such that $ B_2 \subseteq F \subseteq B_1$.
The pair $(E,\mathcal{F})$ is called a \emph{weak $q$-$g$-matroid}.
\end{definition}

We easily obtain the analogue of Propositions \ref{basic} and \ref{next} also for these objects.

\begin{proposition} \label{basic3}
\begin{enumerate}
\item If $\mathcal{B}$ is the family of bases for a $q$-matroid $M$ with ground set $E$, then $(E,\mathcal{B})$ is a weak $q$-$g$-matroid.
\item If $\mathcal{I}$ is the family of independent sets for a $q$-matroid $M_1$ with ground set $E$, then $(E,\mathcal{I})$ is (also) a weak $q$-$g$-matroid .
\item If $\mathcal{S}$ is the family of spanning sets for a $q$-matroid $M_2$ with ground set $E$, then $(E,\mathcal{S})$ is (also) a weak $q$-$g$-matroid .
\item Let $\Delta=(E,\mathcal{F})$ be a weak $q$-$g$-matroid, derived from matroids $M_1$ and $M_2$, as in Definition \ref{strong1}.
Let $\mathcal{F}_U$ be the set of those elements of $\mathcal{F}$ that have the maximum dimension among the elements of $\mathcal{F}$, and let $\mathcal{F}_L$ be the set of those elements of $\mathcal{F}$ that have the minimum dimension among the elements of $\mathcal{F}$.
Then $\Delta_U=(E,\mathcal{F}_U)$ and $\Delta_L=(E,\mathcal{F}_L)$ are both matroids (with $\mathcal{F}_U$ and $\mathcal{F}_L$ as families of bases, respectively). These $q$-matroids are called the upper, and the lower $q$-matroid of $\Delta$, respectively.
\item If $(E,\mathcal{F})$ is a weak $q$-$g$-matroid, then $(E,\mathcal{F}^\perp)$, where $\mathcal{F}^\perp$ consists of all orthogonal complements of members of $\mathcal{F}$, is a weak $q$-$g$-matroid as well.
\end{enumerate}
\end{proposition}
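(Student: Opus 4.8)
The statement bundles five claims, but the real content lies in parts (1)–(4); part (5) is the duality statement. My plan is to prove each part by exhibiting, for the given family, a suitable pair of $q$-matroids $M_1, M_2$ satisfying the containment hypothesis of Definition \ref{strong1} (every basis of $M_2$ sits inside a basis of $M_1$, and every basis of $M_1$ contains a basis of $M_2$), and then checking that the family in question is exactly the subspace system $\{F : \exists\, B_2 \subseteq F \subseteq B_1\}$. The key tool throughout is Proposition \ref{argument}: if $Id : M_1 \to M_2$ is a strong map, the containment hypothesis holds automatically. So my first move for each part is to find the right strong identity map.

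For part (1), I would take $M_1 = M_2 = M$. Then $Id : M \to M$ is trivially a strong map (the defining condition $\rho_1(X)-\rho_1(Y) \ge \rho_2(X)-\rho_2(Y)$ of Proposition \ref{equiv} holds with equality), so the hypothesis of Definition \ref{strong1} is met, and the subspace system consists of all $F$ with $B_2 \subseteq F \subseteq B_1$ for bases $B_2, B_1$ of the same $q$-matroid $M$. I must then argue that this system equals $\mathcal{B}$ itself: any basis $B$ lies between itself and itself, so $\mathcal{B}$ is contained in the system; conversely, if $B_2 \subseteq F \subseteq B_1$ with $B_1, B_2$ bases, then equidimensionality of bases in a $q$-matroid forces $\dim B_2 = \dim B_1$, hence $F = B_1 = B_2 \in \mathcal{B}$. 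For part (2), I would take $M_2$ to be the \emph{trivial $q$-matroid} $U(0,n)$ whose only basis is $\{0\}$, and $M_1 = M$; then $Id : M \to U(0,n)$ is strong, the lower bound $B_2 = \{0\}$ is vacuous, and the system becomes all $F$ contained in some basis of $M$, which is precisely the family $\mathcal{I}$ of independent spaces. Part (3) is dual: take $M_1 = U(n,n)$ (only basis $E$) and $M_2 = M$, so the system is all $F$ containing some basis of $M$, i.e.\ all spanning spaces $\mathcal{S}$. Alternatively, part (3) follows from part (2) together with part (5) and the fact that spanning spaces are orthogonal complements of independent spaces of the dual.

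Part (4) requires identifying the extreme layers. Here $\mathcal{F}_L$ consists of the minimal-dimensional feasible spaces, which by the structure of the subspace system are exactly the bases $B_2$ of $M_2$ (any $F$ has dimension at least $\dim B_2 = \rho_2(E)$, with equality forcing $F = B_2$); similarly $\mathcal{F}_U$ consists of the bases $B_1$ of $M_1$. Thus $\Delta_L$ is $M_2$ and $\Delta_U$ is $M_1$, both genuine $q$-matroids by construction. The main obstacle I anticipate is justifying that the minimal (resp.\ maximal) elements of $\mathcal{F}$ are \emph{precisely} the bases of $M_2$ (resp.\ $M_1$), and not some smaller or larger family: I must rule out the existence of a feasible $F$ of minimal dimension that is not itself a basis of $M_2$, which follows because $B_2 \subseteq F$ and $\dim F = \dim B_2$ give $F = B_2$. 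Finally, for part (5), I would invoke Lemma \ref{dualfact}: if the weak $q$-$g$-matroid arises from $M_1, M_2$ with $Id : M_1 \to M_2$ strong, then $Id : M_2^* \to M_1^*$ is strong, so $M_2^*, M_1^*$ satisfy the containment hypothesis. The orthogonal complement sends each chain $B_2 \subseteq F \subseteq B_1$ to $B_1^\perp \subseteq F^\perp \subseteq B_2^\perp$, where $B_1^\perp$ ranges over bases of $M_1^*$ and $B_2^\perp$ over bases of $M_2^*$ (using that complementation reverses inclusion and that bases of $M^*$ are the complements of bases of $M$); hence $\mathcal{F}^\perp$ is exactly the subspace system of the pair $(M_2^*, M_1^*)$, proving it is a weak $q$-$g$-matroid.
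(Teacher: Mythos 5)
Your proposal is correct and follows essentially the same route as the paper: the same choices $M_1=M_2=M$ for (1), $M_2=U(0,n)$ for (2), $M_1=U(n,n)$ for (3), identifying $\Delta_U=M_1$ and $\Delta_L=M_2$ for (4), and the inclusion-reversal of orthogonal complements together with the fact that bases of the dual $q$-matroid are the perps of bases for (5); you simply supply more of the routine verifications than the paper does. One small caution: in part (5) you invoke Lemma \ref{dualfact}, which presupposes that $Id:M_1\to M_2$ is a strong map, but a weak $q$-$g$-matroid need not arise from a strong map (cf.\ Proposition \ref{weaknotstrong}); fortunately your actual argument --- dualizing the containment hypothesis of Definition \ref{strong1} directly via $B_2\subseteq F\subseteq B_1 \Leftrightarrow B_1^\perp\subseteq F^\perp\subseteq B_2^\perp$ --- does not need that lemma, so the proof stands once that appeal is dropped.
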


\begin{proof}
For (1) take $M=M_1=M_2$, for (2) take for $M_2$ the zero $q$-matroid $U(0,n)$, and let $M_1$ be the $q$-matroid  we are looking at; for (3) set $M_1=U(n,n)$, and let $M_2$ be the $q$-matroid we are looking at; and for (4) the upper $q$-matroid will be $M_1$, and the lower one will be $M_2$. Finally, for the duality result (5), we use that $B_1\subseteq F \subseteq B_2$ if and only if $B_2^\perp\subseteq F^\perp\subseteq B_1^\perp$ and that taking the orthogonal complement of a basis of a $q$-matroid gives as result a basis of the dual $q$-matroid.
\end{proof}

We now give an important definition.

\begin{definition} \label{qgdef}
Let $Id: M_1 \rightarrow M_2$ be a strong map between $q$-matroids $M_1$ and $M_2$ on the same ground space $E$. A \emph{$q$-$g$-matroid} is the space system where the feasible spaces are the independent ones for $M_1$, that are also spanning for $M_2$.
\end{definition}

We then immediately obtain:

\begin{proposition} \label{strongisweak}
A $q$-$g$-matroid is a weak $q$-$g$-matroid.
\end{proposition}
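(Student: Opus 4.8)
The plan is to show that the feasible spaces of a $q$-$g$-matroid, as defined in Definition \ref{qgdef}, coincide exactly with the feasible spaces of some weak $q$-$g$-matroid in the sense of Definition \ref{strong1}. Both definitions start from a pair of $q$-matroids $M_1, M_2$ on the same ground space $E$, so the natural strategy is to take the \emph{same} pair $M_1, M_2$ together with the strong map $Id: M_1 \rightarrow M_2$ and verify that the two prescriptions for $\mathcal{F}$ produce identical families of subspaces.

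First I would invoke Proposition \ref{argument}: since $Id: M_1 \rightarrow M_2$ is a strong map, every basis of $M_2$ is contained in a basis of $M_1$, and every basis of $M_1$ contains a basis of $M_2$. This is precisely the hypothesis required by Definition \ref{strong1}, so the pair $(M_1, M_2)$ does define a weak $q$-$g$-matroid, whose feasible family I will call $\mathcal{F}_w$. Next I would unwind the two definitions of $\mathcal{F}$. The weak family is $\mathcal{F}_w = \{ F : B_2 \subseteq F \subseteq B_1 \text{ for some basis } B_2 \text{ of } M_2 \text{ and some basis } B_1 \text{ of } M_1\}$. The $q$-$g$-matroid family is $\mathcal{F}_g = \{ F : F \text{ is independent in } M_1 \text{ and spanning in } M_2\}$. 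I would then show $\mathcal{F}_g = \mathcal{F}_w$ by double inclusion.

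For $\mathcal{F}_w \subseteq \mathcal{F}_g$: if $B_2 \subseteq F \subseteq B_1$, then $F$ is a subspace of the basis $B_1$ of $M_1$, hence independent in $M_1$ (any subspace of an independent space is independent, since $r_1(F) = \dim F$ follows from $\dim F \le r_1(B_1) = \dim B_1$ via (R2) and (R1)); and $F$ contains the basis $B_2$ of $M_2$, so $r_2(F) \ge r_2(B_2) = r_2(E)$ by (R2), making $F$ spanning in $M_2$. For $\mathcal{F}_g \subseteq \mathcal{F}_w$: if $F$ is independent in $M_1$, it is contained in a maximal independent space, i.e. a basis $B_1$ of $M_1$; if $F$ is spanning in $M_2$, then $r_2(F) = r_2(E)$, so $F$ contains a maximal independent subspace of itself which is simultaneously a basis $B_2$ of $M_2$ (a spanning space always contains a basis of the $q$-matroid). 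This yields $B_2 \subseteq F \subseteq B_1$, placing $F$ in $\mathcal{F}_w$.

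The main obstacle I anticipate is the cleanest justification of the two elementary facts ``an independent space extends to a basis of $M_1$'' and ``a spanning space contains a basis of $M_2$'' in the $q$-matroid setting; these are $q$-analogues of routine matroid facts, but I would want to cite the cryptomorphism results of \cite{bcj,CJ} or derive them directly from the rank axioms (R1)--(R3) rather than take them for granted. Everything else is a straightforward double inclusion, so once these two facts are secured the proof is essentially immediate and the statement follows.
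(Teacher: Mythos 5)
Your proposal is correct and takes essentially the same route as the paper: the paper's entire proof is the single line ``This follows from Proposition \ref{argument}'', which is exactly the proposition you invoke to verify the hypothesis of Definition \ref{strong1}. Your additional double-inclusion argument showing that the two prescriptions for $\mathcal{F}$ (independent in $M_1$ and spanning in $M_2$, versus sandwiched between a basis of $M_2$ and a basis of $M_1$) yield the same family is left implicit in the paper, and your filling it in is sound.
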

\begin{proof}
This follows from Proposition \ref{argument}.
\end{proof}

The last part of the next result follows essentially from Proposition \ref{strongisweak}:

\begin{proposition} \label{qgbeinggood}
Proposition \ref{basic3} holds when replacing ``weak $q$-$g$-matroid''
with ``$q$-$g$-matroid'' everywhere.
\end{proposition}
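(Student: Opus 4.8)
The plan is to exploit the fact, established in Proposition \ref{strongisweak} (via Proposition \ref{argument}), that a $q$-$g$-matroid coming from a strong map $Id : M_1 \rightarrow M_2$ is a weak $q$-$g$-matroid. More precisely, I would first record the key identification: a subspace $F$ is independent in $M_1$ if and only if $F$ is contained in some basis $B_1$ of $M_1$, and $F$ is spanning in $M_2$ if and only if $F$ contains some basis $B_2$ of $M_2$. Hence the feasible family $\mathcal{F}$ of the $q$-$g$-matroid defined by $Id : M_1 \rightarrow M_2$ (Definition \ref{qgdef}) is exactly $\{F : B_2 \subseteq F \subseteq B_1 \text{ for some basis } B_2 \text{ of } M_2 \text{ and some basis } B_1 \text{ of } M_1\}$, so it coincides with the feasible family of the weak $q$-$g$-matroid attached to the pair $(M_1,M_2)$. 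With this identification in hand, the combinatorics of Proposition \ref{basic3} transfers, and the only genuinely new task is to verify that in each item the relevant pair $(M_1,M_2)$ arises from a strong map, and that duality preserves this strong-map structure.

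For parts (1)--(3) I would exhibit the strong maps directly and check them via the criterion of Proposition \ref{equiv}, namely that $Id : M_1 \rightarrow M_2$ is strong iff $\rho_1(X)-\rho_1(Y) \ge \rho_2(X)-\rho_2(Y)$ for all $Y \subseteq X$. For (1) take $M_1=M_2=M$, where the inequality is an equality and the feasible spaces (independent and spanning in $M$) are precisely the bases $\mathcal{B}$. For (2) take $M_1=M$ and $M_2=U(0,n)$; here $\rho_2 \equiv 0$, so the criterion reduces to monotonicity (R2) of $\rho_1$, and every space is spanning in $U(0,n)$, so the feasible family is exactly the independent spaces $\mathcal{I}$ of $M$. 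For (3) take $M_1=U(n,n)$ and $M_2=M$; the criterion becomes $\dim X - \dim Y \ge \rho_2(X)-\rho_2(Y)$, which follows from the standard consequence of (R3) that rank increases by at most one when dimension increases by one; every space is independent in $U(n,n)$, so the feasible family is exactly the spanning spaces $\mathcal{S}$ of $M$.

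For part (4), since a $q$-$g$-matroid is in particular a weak $q$-$g$-matroid with the same feasible family, Proposition \ref{basic3}(4) applies verbatim and identifies the upper and lower $q$-matroids as $M_1$ and $M_2$ respectively (the maximal-dimensional feasible spaces being the bases of $M_1$, the minimal-dimensional ones the bases of $M_2$). For part (5), the point where the $q$-$g$-matroid case genuinely differs from the weak case, I would use Lemma \ref{dualfact}: since $Id : M_1 \rightarrow M_2$ is a strong map, so is $Id : M_2^* \rightarrow M_1^*$, and the latter defines a $q$-$g$-matroid whose feasible spaces are those independent in $M_2^*$ and spanning in $M_1^*$. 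Using the duality between independent and spanning spaces under orthogonal complement (as in Proposition \ref{span}), $F$ is independent in $M_1$ and spanning in $M_2$ iff $F^\perp$ is spanning in $M_1^*$ and independent in $M_2^*$; hence $\mathcal{F}^\perp$ is precisely the feasible family of the $q$-$g$-matroid attached to $Id : M_2^*\rightarrow M_1^*$, so $(E,\mathcal{F}^\perp)$ is a $q$-$g$-matroid.

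The main obstacle is not any single computation but rather keeping straight which auxiliary results upgrade ``weak'' to ``strong'': parts (1)--(4) reduce to the weak case once the pairs are recognized as strong maps (an easy check via Proposition \ref{equiv}), whereas part (5) is the only item that does \emph{not} follow formally from Proposition \ref{strongisweak}, and instead requires the self-duality of strong maps in Lemma \ref{dualfact} together with the independent/spanning orthogonality. The mildest technical point is the rank-increment inequality needed for the strong-map check in (3).
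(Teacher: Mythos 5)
Your proof is correct and follows essentially the same route as the paper: identify the pairs $(M_1,M_2)$ for items (1)--(3), verify the strong-map criterion of Proposition \ref{equiv}, reduce (4) to the weak case, and use Lemma \ref{dualfact} for (5). The only divergence is item (3), where you check the inequality $\dim X-\dim Y\ge\rho_2(X)-\rho_2(Y)$ directly via the unit-rank-increase consequence of (R3), while the paper instead reduces (3) to (2) by the duality of item (5); both arguments are valid, and your explicit identification of the $q$-$g$-matroid feasible family with the weak one is a useful detail the paper leaves implicit.
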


\begin{proof}
In order to show that a weak $q$-$g$-matroid is a $q$-$g$-matroid, we need to show that there is a strong map between the $q$-matroids $M_1$ and $M_2$ giving the weak $q$-$g$ matroid. By Proposition \ref{equiv}, this means we need for all $Y\subseteq X\subseteq E$ that
\[ \rho_1(X)-\rho_1(Y) \ge \rho_2(X)-\rho_2(Y). \]
In (1), taking $M_1=M_2$, one obviously has
that this statement holds, since the rank functions are equal, so $Id$ is a strong map. In (2) we have $M_2=U(0,0)$, so $r_2=0$, so the same inequality holds again. 
In (3) the feasible spaces of the weak $q$-$g$-matroid studied consist of the orthogonal complements of the independent spaces of the given matroid $M_2$. These complements form a $q$-$g$-matroid by (2), and since the set of orthogonal spaces of the feasible spaces of a  $q$-$g$-matroid, form a $q$-$g$-matroid, we are done. Statement (4) holds by definition: the upper $q$-matroid is $M_1$ and the lower $q$-matroid is $M_2$. Finally, (5) holds by Lemma \ref{dualfact}.
\end{proof}

The following result with proof is a $q$-version of a result/proof communicated to us by Steven D. Noble:

\begin{proposition} \label{weaknotstrong}
There exists a weak $q$-$g$-matroid that is not a $q$-$g$-matroid.
\end{proposition}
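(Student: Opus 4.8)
The plan is to exhibit an explicit pair of $q$-matroids $M_1$ and $M_2$ on a common ground space $E$ whose bases nest in the sense of Definition \ref{strong1} -- so that they define a weak $q$-$g$-matroid -- but for which the identity is \emph{not} a strong map, so that by Proposition \ref{equiv} the same family cannot arise from a $q$-$g$-matroid. This is the $q$-analogue of the classical example in which $M_1$ is the rank-$2$ matroid on $\{1,2,3,4\}$ with $1,2$ parallel and $M_2$ is the rank-$1$ matroid with $1$ a loop and $2,3,4$ parallel; the feasible structure fails to be a strong-map $g$-matroid because putting the $M_2$-loop $1$ together with its $M_1$-parallel partner $2$ raises the $M_2$-rank without raising the $M_1$-rank.

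Concretely I would take $E=\mathbb{F}^3$, fix a $2$-dimensional subspace $P\subseteq E$ and a $1$-dimensional subspace $y\subseteq P$. Define $M_1=(E,\rho_1)$ of rank $2$ by $\rho_1(A)=\dim A$ when $\dim A\le 1$, $\rho_1(\pi)=2$ for every $2$-dimensional $\pi\neq P$, $\rho_1(P)=1$, and $\rho_1(E)=2$; and define $M_2=(E,\rho_2)$ of rank $1$ by $\rho_2(A)=0$ if $A\subseteq y$ and $\rho_2(A)=1$ otherwise. The first step is to check that $\rho_1,\rho_2$ satisfy (R1)--(R3) of Definition \ref{rango}; this is routine, the only instructive case being two distinct $1$-dimensional $\ell_1,\ell_2\subseteq P$, where $\rho_1(\ell_1+\ell_2)+\rho_1(\ell_1\cap\ell_2)=\rho_1(P)+0=1\le 2=\rho_1(\ell_1)+\rho_1(\ell_2)$ confirms submodularity precisely at the one place it could fail.

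Next I would verify the basis-nesting hypotheses of Definition \ref{strong1}. The bases of $M_1$ are exactly the $2$-dimensional $\pi\neq P$, and the bases of $M_2$ are exactly the $1$-dimensional $\ell\neq y$. Since each $1$-dimensional subspace lies in $q+1\ge 2$ of the $2$-dimensional subspaces, at most one of which is $P$, every basis of $M_2$ is contained in a basis of $M_1$; and since each $2$-dimensional subspace contains $q+1\ge 2$ lines, at least $q\ge 1$ of which differ from $y$, every basis of $M_1$ contains a basis of $M_2$. Hence $(M_1,M_2)$ defines a weak $q$-$g$-matroid $(E,\mathcal{F})$. To see that the identity is not a strong map, apply Proposition \ref{equiv} with $Y=y\subseteq X=P$: here $\rho_1(P)-\rho_1(y)=1-1=0$ whereas $\rho_2(P)-\rho_2(y)=1-0=1$, so the required inequality $\rho_1(X)-\rho_1(Y)\ge \rho_2(X)-\rho_2(Y)$ fails.

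The main obstacle is upgrading ``$Id$ is not strong between \emph{these particular} $M_1,M_2$'' to ``$(E,\mathcal{F})$ is not a $q$-$g$-matroid under \emph{any} presentation.'' For this I would argue that in every realization of a family as a $q$-$g$-matroid via a strong map $Id\colon M_1'\to M_2'$ (Definition \ref{qgdef}), the feasible spaces of maximal dimension are exactly the bases of $M_1'$ and those of minimal dimension exactly the bases of $M_2'$: by Proposition \ref{argument} each basis of $M_1'$ contains a basis of $M_2'$ (so is spanning in $M_2'$ and feasible) and each basis of $M_2'$ lies in a basis of $M_1'$ (so is independent in $M_1'$ and feasible), while all feasible spaces are sandwiched between such bases. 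Comparing these two extremal layers of $\mathcal{F}$ with the ones computed above forces the bases of $M_1'$ to coincide with those of $M_1$ and the bases of $M_2'$ with those of $M_2$; by the cryptomorphism between $q$-matroids and their bases this gives $M_1'=M_1$ and $M_2'=M_2$. Thus any $q$-$g$-matroid structure on $\mathcal{F}$ would require $Id\colon M_1\to M_2$ to be strong, contradicting the violation exhibited above, and the proof is complete (for every prime power $q$, with $n=3$ being the smallest possible dimension).
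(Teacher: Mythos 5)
Your proof is correct and follows the same overall strategy as the paper's: exhibit a pair of $q$-matroids on a common ground space whose bases nest as in Definition \ref{strong1}, so that they define a weak $q$-$g$-matroid, and then show that the identity violates the rank-difference criterion of Proposition \ref{equiv}. The differences lie in the example and in one extra step. The paper works in $\mathbb{F}_q^4$ with $M_1=U(2,3)\oplus U(1,1)$ and $M_2=U(1,2)\oplus U(1,2)$, relying on the direct-sum machinery of \cite{CJ2} to identify bases and ranks; you instead build a smaller example on $\mathbb{F}^3$ by writing the two rank functions down explicitly and verifying (R1)--(R3) directly, which is more elementary and isolates the single point where the strong-map inequality fails (the plane $P$ of $M_1$-rank $1$ containing the $M_2$-loop $y$). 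More substantively, your final paragraph supplies a step the paper's proof leaves implicit: showing that the identity is not strong for one particular presenting pair does not by itself exclude that the same space system arises from a different strong pair $M_1'\to M_2'$. Your observation that in any $q$-$g$-matroid the feasible spaces of maximal and minimal dimension are exactly the bases of $M_1'$ and of $M_2'$ (via Proposition \ref{argument} together with the fact that every feasible space is independent in $M_1'$ and spanning in $M_2'$, hence sandwiched between such bases) pins the presentation down uniquely and closes this gap; the same argument would equally complete the paper's version. The only thing to be sure to write out in full is the routine check that your $\rho_1$ satisfies all cases of (R3), not only the instructive one you highlight, but that verification does go through.
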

\begin{proof}

Let $E=\Fq^4$, and let $M_1=U(2,3)\oplus U(1,1)$, and $M_2=U(1,2)\oplus U(1,2)$.
These $q$-matroids are defined in the sense of \cite[Section 6]{CJ2}. Let $e_1,e_2,e_3,e_4$ be coordinate vectors for $E$, where $e_1,e_2$ and $e_3,e_4$ are coordinates for the ground spaces of the two copies of $U(1,2)$, and $e_1,e_2,e_3$ for the ground space of $U(2,3)$ and $e_4$ corresponds to $U(1,1)$.

From \cite[Example 49]{CJ2} it follows that the bases of $M_2$ are all $2$-dimensional subspaces of $E$, except $E_1=\langle e_1,e_2\rangle$ and $E_2=\langle e_3,e_4\rangle$.
In virtue of being the ground spaces of the two copies of $U(1,2)$ in the first place, their ranks are $r_2(E_1)=r_2(E_2)=1$.

After a similar calculation as that in \cite[Example 49]{CJ2} one arrives at the conclusion that the bases of  $M_1=U(2,3)\oplus U(1,1)$ are all $3$-dimensional subspaces of $E$, except $E^'_1=\langle e_1,e_2,e_3\rangle$, and that $r_1(E^'_1)=2$.

It is clear that all $3$-dimensional subspaces of $E$ contain $2$-dimensional subspaces different from $E_1$ and $E_2$,
so every basis of $M_1$ contains a basis of $M_2$.  
Moreover it is clear that each $2$-dimensional subspace of $E$, in particular any basis of $M_2$, is contained in more than one $3$-dimensional subspace of $E$, so in particular one different from $E^'_1$, i.e. in a basis of $M_1$.
Thus the space system of all spaces contained in a basis of $M_1$ and containing a basis of $M_2$, is a weak $q$-$g$-matroid.

However, there is no strong map between $M_1$ and $M_2$. Apply the criterion from Proposition \ref{equiv} to $X=E_1'$ and $Y=E_1$. Then we get
\[ r_1(E_1')-r_1(E_1)=2-2=0< 1=2-1= r_2(E_1')-r_2(E_1), \]
contradicting the criterion.
Thus the space system of all spaces contained in a basis of $M_1$, and containing a basis of $M_2$ is not a $q$-$g$-matroid.
\end{proof}

\subsection{Relation to deltamatroids}
Working with sets, as in \cite{B2} and \cite{B3}, instead of spaces as we do,  it has been proven in (\cite[p.70]{B2}) that not all $\Delta$-matroids are $g$-matroids. 
  
On the other hand it has been shown (see \cite[Proposition 7.2]{B3}) that $g$-matroids always are $\Delta$-matroids. It is therefore natural to give the following result (adapted to our setting, after an example given to us by Steven D. Noble):

\begin{proposition}
There exists a $q$-$\Delta$-matroid that is not an (even weak) $q$-$g$-matroid.
\end{proposition}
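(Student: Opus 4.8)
The plan is to exhibit an explicit $q$-$\Delta$-matroid and then verify that it fails to be a weak $q$-$g$-matroid, which by Proposition \ref{strongisweak} immediately rules out it being a $q$-$g$-matroid as well. Since a weak $q$-$g$-matroid is sandwiched between two $q$-matroids $M_1 \supseteq M_2$ (in the sense of bases), the most economical obstruction is to find a $q$-$\Delta$-matroid whose feasible family, when restricted to the extreme dimensions, produces an upper and lower matroid that are genuine $q$-matroids, but whose intermediate-dimensional feasible spaces cannot be realized as precisely the spaces sandwiched between a basis of $M_2 = \Delta_L$ and a basis of $M_1 = \Delta_U$. I would look first at the small example machinery already developed in Subsection 3.3, namely Proposition \ref{ex1} and the spread construction of Example \ref{qdeltaex}, since these give a ready supply of $q$-$\Delta$-matroids that provably do not come from $q$-matroids.

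First I would set $E = \Fq^4$ and take a $q$-$\Delta$-matroid of the form guaranteed by Proposition \ref{ex1}, with feasible family $\mathcal{F} = \{\{0\}, E\} \cup \mathcal{D}$ for a suitable collection $\mathcal{D}$ of $2$-dimensional spaces. Here $\mathcal{F}_L = \{\{0\}\}$ and $\mathcal{F}_U = \{E\}$, so the candidate lower and upper matroids are forced to be $U(0,4)$ and $U(4,4)$. If $(E,\mathcal{F})$ were a weak $q$-$g$-matroid arising from a pair $M_2 \subseteq M_1$, then by Proposition \ref{extremes} (and the identification of $\mathcal{F}_L, \mathcal{F}_U$ with the bases of $M_2, M_1$) we would need $M_2 = U(0,4)$ and $M_1 = U(4,4)$. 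But then the feasible family of the weak $q$-$g$-matroid, namely all $F$ with $\{0\} = B_2 \subseteq F \subseteq B_1 = E$, would be the set of \emph{all} subspaces of $E$. The key step is then to observe that our $\mathcal{F}$ is strictly smaller: by choosing $\mathcal{D}$ (for instance a spread as in Example \ref{qdeltaex}) to miss some $2$-dimensional subspace, or more simply by noting that $\mathcal{F}$ contains no $1$- or $3$-dimensional spaces at all, we get $\mathcal{F} \neq \kL(E)$, a contradiction.

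I would present this as follows: show that the extremal layers force $M_1 = U(4,4)$ and $M_2 = U(0,4)$ whenever a weak $q$-$g$-matroid has minimal feasible dimension $0$ and maximal feasible dimension $n$; deduce that its feasible family must be all of $\kL(E)$; and then point out that the spread example of Example \ref{qdeltaex}, whose feasible family is $\{\{0\},E\}\cup\mathcal{S}$ and which omits every $1$- and $3$-dimensional space, is a $q$-$\Delta$-matroid (by Proposition \ref{ex1}) that is therefore not even a weak $q$-$g$-matroid. Invoking Proposition \ref{strongisweak} then finishes the proof, since a $q$-$g$-matroid is in particular a weak one.

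The main obstacle I anticipate is the step identifying the extremal layers $\mathcal{F}_L$ and $\mathcal{F}_U$ of a weak $q$-$g$-matroid with the bases of $M_2$ and $M_1$ respectively: one must check carefully that in the sandwich description the minimal-dimension feasible spaces are exactly the bases of $M_2$ and the maximal-dimension ones exactly the bases of $M_1$. This requires that all bases of $M_2$ share a common dimension (true, since $q$-matroid bases are equidimensional) and that no lower-dimensional feasible space sneaks in below a basis of $M_2$, which follows from the defining condition $B_2 \subseteq F$. Once this bookkeeping is pinned down, the contradiction is immediate, and the explicit spread example makes the argument fully concrete rather than merely an existence claim.
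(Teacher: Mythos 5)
Your proposal is correct and follows essentially the same route as the paper: both produce a feasible family of the form covered by Proposition \ref{ex1} containing $\{0\}$ and $E$ (the paper uses all even-dimensional subspaces of $\mathbb{F}^4$, you use the spread of Example \ref{qdeltaex}), and both derive the contradiction from the fact that $\{0\},E\in\mathcal{F}$ forces $M_2=U(0,4)$ and $M_1=U(4,4)$, hence every subspace would have to be feasible, while $\mathcal{F}$ omits all odd-dimensional ones. The choice of explicit example is immaterial, and your extra care in pinning down why the sandwich condition forces the extremal matroids is a sound (if slightly more detailed) version of the step the paper leaves implicit.
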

\begin{proof}
Let $E=\mathbb{F}^4$ and let $\mathcal{F}$ be the family containing all subspaces of even dimension of $E$. Then $(E,\mathcal{F})$ is a $q$-$\Delta$ matroid by Proposition \ref{ex1}. On the other hand, it is clearly not a (weak) $q$-$g$-matroid, because $\mathcal{F}$ contains $\{0\}$ and $E$, but does not contain any subspaces of odd dimension.
\end{proof}

Furthermore we have:

\begin{proposition} \label{weakgnotdelta}
There exists a weak  $q$-$g$-matroid, which is not a $q$-$\Delta$-matroid.
\end{proposition}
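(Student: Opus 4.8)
The plan is to reuse the weak $q$-$g$-matroid constructed in the proof of Proposition \ref{weaknotstrong} and show directly that it violates axiom \textbf{(F1)}. Recall that there $E=\Fq^4$, $M_1=U(2,3)\oplus U(1,1)$ and $M_2=U(1,2)\oplus U(1,2)$, with $E_1=\langle e_1,e_2\rangle$, $E_2=\langle e_3,e_4\rangle$ and $E_1'=\langle e_1,e_2,e_3\rangle$; the bases of $M_2$ are all $2$-dimensional subspaces except $E_1,E_2$, and the bases of $M_1$ are all $3$-dimensional subspaces except $E_1'$. First I would determine the feasible family $\mathcal{F}$ of the associated weak $q$-$g$-matroid explicitly: since a feasible space $F$ satisfies $B_2\subseteq F\subseteq B_1$ with $\dim B_2=2$ and $\dim B_1=3$, one has $\dim F\in\{2,3\}$, and a short argument shows that $\mathcal{F}$ consists of all $2$-dimensional subspaces except $E_1,E_2$ together with all $3$-dimensional subspaces except $E_1'$.

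Next I would exhibit a single failing instance of \textbf{(F1)}. Take $X=\langle e_1,e_2,e_4\rangle$ and $Y=\langle e_1,e_3\rangle$, both of which lie in $\mathcal{F}$, and put $A=E_1=\langle e_1,e_2\rangle$, which is a codimension $1$ subspace of $X$. The task is then to check that neither (i) nor (ii) can hold. For (ii): any codimension $1$ space $Z$ meets the $2$-dimensional $A$ in a subspace of dimension $1$ or $2$; in the first case $Z\cap A$ is $1$-dimensional and in the second $Z\cap A=A=E_1$, and none of these lies in $\mathcal{F}$, so (ii) fails.

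The heart of the argument is the failure of (i). Here $A+z$ is $3$-dimensional for every $z\not\subseteq Z$, so $A+z\in\mathcal{F}$ amounts to $A+z\neq E_1'$. I would observe that the $1$-dimensional spaces $z$ with $E_1+z=E_1'$ are exactly those contained in $E_1'$ but not in $E_1$, and that these together with $E_1$ span all of $E_1'$. Hence for every such ``bad'' $z$ to satisfy $z\subseteq Z$ one is forced to have $E_1'\subseteq Z$, i.e.\ $Z=E_1'$ by a dimension count. But $Y=\langle e_1,e_3\rangle\subseteq E_1'$, so the requirement $Y\not\subseteq Z$ fails for $Z=E_1'$; and for any other codimension $1$ space $Z\supseteq A$ one checks $Z\cap E_1'=E_1$, so some bad $z$ lies outside $Z$ and (i) fails there as well. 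Thus \textbf{(F1)} fails for this $X,Y,A$, and $(E,\mathcal{F})$ is not a $q$-$\Delta$-matroid.

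The main obstacle is precisely the verification of the failure of (i): it is not enough to rule out the single natural candidate $Z=E_1'$, one must show that no codimension $1$ space $Z$ works at all, which is exactly the spanning observation that the $1$-spaces $z$ producing $A+z=E_1'$ generate $E_1'$. Everything else reduces to routine dimension bookkeeping of subspaces of $\Fq^4$.
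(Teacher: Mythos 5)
Your proposal is correct and uses essentially the same counterexample and argument as the paper: the weak $q$-$g$-matroid from Proposition \ref{weaknotstrong} with the same witnesses $X=\langle e_1,e_2,e_4\rangle$, $Y=\langle e_1,e_3\rangle$, $A=E_1$. The only difference is cosmetic: to rule out (i) the paper simply picks a $z\subseteq Y$ with $z\not\subseteq Z$ (which exists since $Y\not\subseteq Z$) and notes $A+z=E_1'$ is infeasible, whereas you enumerate the ``bad'' $z$ and show no admissible $Z$ can contain them all; both are valid.
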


\begin{proof}
Take the weak $q$-$g$-matroid from Proposition \ref{weaknotstrong}. Its feasible spaces are all bases of $M_1$, that have dimension $3$, and all bases of $M_2$, that have dimension $2$.
Let $X=\langle e_1,e_2,e_4 \rangle$, and $Y=\langle e_1,e_3\rangle$.
We see that $X$ is a basis for $M_1$, and $Y$ is a basis for $M_2$, so both are feasible.
Look at the codimension $1$ space $A=\langle e_1,e_2\rangle$ in $X$. Certainly $A$ is not feasible, since we already saw $A=E_1$ is not a basis of $M_1$ or $M_2$. Any intersection of $A$ by a codimension $1$-plane $Z$ in $E$ is also then not feasible.
Hence (ii) of \textbf{(F1)} does not hold. So (i) of \textbf{(F1)} needs to hold. In particular, there is a $1$-dimensional space $z \subseteq Y$ such that $A+z$ is feasible. But then $A+z=\langle e_1,e_2,e_3 \rangle$ since both $A$ and $z$ are contained in this space, and $A$ is not feasible. But $\langle e_1,e_2,e_3 \rangle$ cannot be feasible. It has dimension $3$ and cannot be a basis of $M_2$, which has rank $2$. And it cannot have rank $3$ for $M_1$, since it is completely contained in one of the summands (which in this case has rank $2$ for $M_1$).
\end{proof}

In \cite[p. 157]{B3} (after \cite{T}) one gives an axiomatic description of a $g$-matroid, which as a starting point is defined in an analogous way for sets   as we have done for spaces, in Definition \ref{qgdef}. Here is the alternative,  axiomatic definition of a $g$-matroid $(E,\mathcal{F})$ for a finite set $E$ from \cite{B3}.

\begin{definition} \label{axsets}
For all $X,Y$ in $\mathcal{F}$, we have:

 (3) If $x \in X-Y$, then either $X-x \in \mathcal{F}$ or there exists a $y \in Y-X$ such that $X-x+y \in \mathcal{F}$.
 
 (4) If $x \in X-Y$, then either $Y+x  \in \mathcal{F},$ or there exists a $y \in Y-X$ such that $Y+x-y \in \mathcal{F}$.

\end{definition}

If (3) and (4) hold, then one easily sees that $(E,\mathcal{F})$ is a $\Delta$-matroid in the traditional case of sets. In the $q$-analogue it is natural to define analogues of axioms (3) and (4):

\begin{definition}
Let $E=\mathbb{F}^n$. 
We define the following two properties for a family $\mathcal{F}$ of subspaces of $E$:

\begin{itemize}
\item[\textbf{(F3)}] For every two subspaces $X$ and $Y$ in $\mathcal{F}$, and for each subspace $A\subseteq E$ that has codimension $1$ in $X$, it either holds that:
\begin{itemize}
\item[(i)] there exists a codimension $1$ space $Z \subseteq E$ with $A \subseteq Z$ and $Y\not\subseteq Z$, such that for all $z\subseteq E$, $z\not\subseteq Z$ it holds that $A+z\in \mathcal{F}$; \quad or
\item[(v)] $A \in \mathcal{F}$.
\end{itemize}
\item[\textbf{(F4)}] For every two subspaces $X$ and $Y$ in $\mathcal{F}$, and for each subspace $A\subseteq E$ with $X$ of codimension $1$ in $A$, it either holds that:
\begin{itemize}
\item[(iii)] a $1$-dimensional $z\subseteq E$ with $z \subseteq A$, $z\not\subseteq Y$, such that for each $Z\subseteq E$ of codimension $1$, $z\not\subseteq Z$ it holds that $A \cap Z \in \mathcal{F}$; \quad or
\item[(vi)] $A\in \mathcal{F}$.
\end{itemize}
\end{itemize}
\end{definition}

\begin{remark} \label{F3F4Delta}
Any space system satisfying \textbf{(F3)} and \textbf{(F4)} will always be the set of feasible spaces for a $q$-$\Delta$-matroid, since (v) implies (ii) and (vi) implies (iv). 
\end{remark}

\begin{theorem} \label{main?}
A $q$-$g$-matroid $G=(E,\mathcal{F})$ is a $q$-$\Delta$-matroid
\end{theorem}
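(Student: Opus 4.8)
The plan is to show that the family $\mathcal{F}$ of a $q$-$g$-matroid satisfies the stronger conditions \textbf{(F3)} and \textbf{(F4)}, and then to invoke the preceding Remark, which guarantees that \textbf{(F3)} and \textbf{(F4)} together imply \textbf{(F1)} and \textbf{(F2)}. Throughout, write $\mathcal{F}$ as the set of subspaces that are independent for $M_1$ and spanning for $M_2$, where $Id\colon M_1\to M_2$ is the underlying strong map with rank functions $\rho_1,\rho_2$, and let $\mathrm{cl}_i(A)$ denote the smallest flat of $M_i$ containing a subspace $A$. (Nonemptiness of $\mathcal{F}$, required in Definition \ref{def}, is immediate from Proposition \ref{argument}: any basis of $M_1$ contains a basis of $M_2$ and is therefore feasible.)

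First I would establish \textbf{(F3)}. Fix $X,Y\in\mathcal{F}$ and $A$ of codimension $1$ in $X$. Since $A\subseteq X$ and $X$ is $M_1$-independent, $A$ is $M_1$-independent as well. If $A$ is also $M_2$-spanning, then $A\in\mathcal{F}$ and option (v) holds; so assume $\rho_2(A)<\rho_2(E)$. Because $A$ has codimension $1$ in the $M_2$-spanning space $X$, the standard bound $\rho_2(X)\le\rho_2(A)+1$ forces $\rho_2(A)=\rho_2(E)-1$, so $F_2:=\mathrm{cl}_2(A)$ is a proper flat of $M_2$ of corank $1$. The crucial step uses the strong map: since $Id\colon M_1\to M_2$ is strong, every flat of $M_2$ is a flat of $M_1$, hence $F_2$ is an $M_1$-flat containing $A$ and therefore $\mathrm{cl}_1(A)\subseteq F_2$. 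Moreover $Y\not\subseteq F_2$ (otherwise $\rho_2(Y)\le\rho_2(F_2)<\rho_2(E)$, contradicting that $Y$ is $M_2$-spanning) and $F_2\ne E$; so in the quotient $E/F_2$ the image of $Y$ is nonzero and one can choose a codimension $1$ space $Z\subseteq E$ with $F_2\subseteq Z$ and $Y\not\subseteq Z$. For every $1$-dimensional $z\not\subseteq Z$ we then have $z\not\subseteq F_2\supseteq\mathrm{cl}_1(A)$, so $A+z$ is $M_2$-spanning (as $z\not\subseteq\mathrm{cl}_2(A)$) and $M_1$-independent (as $z\not\subseteq\mathrm{cl}_1(A)$, using $\dim(A+z)=\dim A+1$); thus $A+z\in\mathcal{F}$, and option (i) holds. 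This proves \textbf{(F3)}.

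I would then obtain \textbf{(F4)} by duality rather than repeating the argument. Taking orthogonal complements sends $\mathcal{F}$ to $\mathcal{F}^{\perp}$, which one checks consists exactly of the subspaces independent for $M_2^*$ and spanning for $M_1^*$; by Lemma \ref{dualfact} the map $Id\colon M_2^*\to M_1^*$ is again strong, so $(E,\mathcal{F}^{\perp})$ is again a $q$-$g$-matroid and hence satisfies \textbf{(F3)} by the previous paragraph. Exactly as in the proof of Proposition \ref{qdual}, conditions \textbf{(F3)} and \textbf{(F4)} are interchanged under $\perp$: $A$ has codimension $1$ in $X$ iff $X^{\perp}$ has codimension $1$ in $A^{\perp}$; taking $Z'=z^{\perp}$, option (iii) for $(A,Y)$ corresponds to option (i) for $(A^{\perp},Y^{\perp})$; and option (vi), namely $A\in\mathcal{F}$, corresponds to option (v), namely $A^{\perp}\in\mathcal{F}^{\perp}$. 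Hence \textbf{(F3)} for $\mathcal{F}^{\perp}$ yields \textbf{(F4)} for $\mathcal{F}$, and together with \textbf{(F3)} this finishes the proof.

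I expect the main obstacle to be the simultaneous choice of the single codimension $1$ space $Z$ in the proof of \textbf{(F3)}: the point is to force $A+z$ to be $M_1$-independent and $M_2$-spanning for \emph{all} admissible $z$ at once. This is precisely where the strong-map hypothesis is indispensable, since it gives the nesting $\mathrm{cl}_1(A)\subseteq\mathrm{cl}_2(A)$ that lets one space $Z\supseteq\mathrm{cl}_2(A)$ control both conditions; without it the two closures could be transverse and no uniform $Z$ avoiding $Y$ need exist, which is consistent with Proposition \ref{weakgnotdelta} showing that a weak $q$-$g$-matroid (where no strong map is assumed) can fail to be a $q$-$\Delta$-matroid.
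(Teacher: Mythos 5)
Your proposal is correct, and it follows the paper's high-level strategy (prove \textbf{(F3)}, deduce \textbf{(F4)} by orthogonal duality via Lemma \ref{dualfact}, then invoke the Remark that \textbf{(F3)} and \textbf{(F4)} imply \textbf{(F1)} and \textbf{(F2)}), but the core argument for \textbf{(F3)} is genuinely different. The paper works with bases: it picks a basis $B_1$ of $M_2$ inside $X$, applies the basis-exchange axiom (nB3) of $M_2$ to $A_1=A\cap B_1$ to produce the hyperplane $Z$, argues separately that $Z\supseteq A$, and then uses the rank-difference inequality of Proposition \ref{equiv} to show each $A+z$ is $M_1$-independent. You instead work with closures and use the flat-preimage definition of a strong map directly: since $\mathrm{cl}_2(A)$ is a flat of $M_2$, it is a flat of $M_1$, giving the nesting $\mathrm{cl}_1(A)\subseteq\mathrm{cl}_2(A)$, from which a single hyperplane $Z\supseteq\mathrm{cl}_2(A)$ avoiding $Y$ simultaneously controls $M_2$-spanning and $M_1$-independence of all the $A+z$. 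Your route is arguably more conceptual and makes the role of the strong-map hypothesis transparent (your closing remark about why weak $q$-$g$-matroids fail is exactly right); its only cost is that it leans on the closure cryptomorphism for $q$-matroids (that $\mathrm{cl}(A)$ is the smallest flat containing $A$ and that $z\not\subseteq\mathrm{cl}(A)$ forces $\rho(A+z)>\rho(A)$), which is standard and in the cited references but not set up explicitly in the paper's preliminaries, whereas the paper's argument uses only the axioms it states. Your observation that nonemptiness of $\mathcal{F}$ follows from Proposition \ref{argument} is a small point the paper leaves implicit.
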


\begin{proof}
We will prove that $\mathcal{F}$ satisfies ($\mathbf{F3}$) and ($\mathbf{F4}$), which we have seen imply ($\mathbf{F1}$) and ($\mathbf{F2}$). We will utilize that $G$ is a weak $q$-$g$-matroid associated to a pair $M_1,M_2$ of $q$-matroids, and with the extra condition that 
\[ \rho_1(X)-\rho_1(Y) \ge \rho_2(X)-\rho_2(Y) \]
whenever $Y \subseteq X$ for all pairs of  subspaces $X$, $Y$ of $E$.

First we show that $\mathcal{F}$ satisfies
\textbf{(F3)}. Let $X,Y$ be a pair of subspaces of $E$, and $A$ a subspace of codimension $1$ in $X$. If $A$ contains a basis of $M_2$, then we are done, since then $A$ also is contained in a basis of $M_1$, since $X$ is. Hence (v) of \textbf{(F3)} holds. So now assume that $A$ does not contain any basis of $M_2$. But $X$ does contain a basis $B_1$ of $M_2$, that is thus not contained in $A$. Likewise $Y$ contains a basis $B_2$ of $M_2$. Let $A_1 =A \cap B_1$. Then $A_1$ is of codimension $1$ in $B_1$, since $B_1$ is contained in $X$ but not in $A$, and $A$ is of codimension $1$ in $X$. Then by axiom (nB3) for $q$-matroids there exists a $Z \subseteq E$ of codimension $1$ in $E$ such that $A_1 \subseteq Z$, and
$Z$ does not contain $B_2$, and $A_1+z$ is a basis of $M_2$ for all $1$-dimensional $z\subseteq E$, and $z$ not contained in $Z$. We observe that $Z$ does not contain $Y$, since it does not contain $B_2$. 

We argue that $Z$ contains $A$. Suppose not, then there would be a $z$ outside $Z$ but in $A$ such that $A_1+z$ is a basis for $M_2$, as we have seen. But $A_1+z\subseteq A$, which contradicts our assumption that $A$ does not contain a basis of $M_2$.

So $Z$ contains $A$, and we may argue like this to prove (i) of \textbf{(F3)}: It holds if each $A+z$ not only contains a basis of $M_2$, as we have proved (since each $A_1+z$ contains such a basis), but also is contained in a basis of $M_1$. This happens if and only if $A+z$ is independent in $M_1$. 

But $\rho_2(A + z)-\rho_2(A) \ge \rho_2(M_2)-(\rho_2(M_2)-1)=1$, since $A+z$ contains $A_1+z$, which contains a basis of $M_2$, while $A$ contains no basis of $M_2$. Hence by the rank inequality $\rho_1(A+z)-\rho_1(A) \ge 1$ also. But $\rho_1(A)=\dim(A)=\dim(X)-1$, since $X$ is independent in $M_1$. Hence $\rho_1(A +z) \ge \rho_1(A)+1=\dim (X)=\dim(A+z)$. Hence $A+z$ is independent in $M_1$, and therefore contained in a basis of $M_1$, for all the relevant $z$, and so they are feasible for $\mathcal{F}$.

By Lemma \ref{dualfact} the dual $q$-$g$-matroid $G^{\perp}=(E,\mathcal{F}^{\perp})$ is a $q$-$g$-matroid too, corresponding to a strong map $M_2^{\perp} \rightarrow M_1^{\perp}$. Using this, we see that \textbf{(F4)} is just \textbf{(F3)} for $X^{\perp}, Y^{\perp}$, so 
\textbf{(F4)} holds for $G$, since \textbf{(F3)} holds for $G^{\perp}$.
\end{proof}

Theorem \ref{main?} gives new proofs of Propositions \ref{ind} and \ref{span}, since the space systems formed by independent spaces, and the space systems formed by spanning spaces of a $q$-matroid, are $q$-g-matroids, by Proposition \ref{qgbeinggood}, and
$q$-$g$-matroids are $q$-$\Delta$-matroids, by Theorem \ref{main?}.

We may now look at 4 kinds of objects: $q$-$\Delta$-matroids, $q$-$g$-matroids, weak $q$-$g$-matroids, and space systems satisfying ($\mathbf{F3}$) and ($\mathbf{F4}$). We know:
\begin{itemize}
\item $q$-$g$-matroids are both $q$-$\Delta$-matroids, weak $q$-$g$-matroids, and space systems satisfying ($\mathbf{F3}$) and ($\mathbf{F4}$).
\item Space systems satisfying ($\mathbf{F3}$) and ($\mathbf{F4}$) are $q$-$\Delta$-matroids.
\item $q$-$\Delta$-matroids are not always weak $q$-$g$-matroids, and therefore not always $q$-$g$-matroids.
\item Weak $q$-$g$-matroids are not always $q$-$\Delta$-matroids, and weak $q$-$g$-matroids are not always $q$-$g$-matroids.
\end{itemize}
We believe that: Space systems satisfying ($\mathbf{F3}$) and ($\mathbf{F4}$) are always 
(and are then the same as) $q$-$g$-matroids, but this is still an open question (to us).

\subsection{Partial results about equivalences}

We now take a closer look at the (in)equivalence between $q$-$\Delta$-matroids, $q$-$g$-matroids, weak $q$-$g$-matroids, and space systems satisfying ($\mathbf{F3}$) and ($\mathbf{F4}$).

Given an arbitrary $q$-$\Delta$-matroid, with an upper $q$-matroid $M_1$ and a lower $q$-matroid $M_2$. Two questions one might pose, are the following:
\begin{itemize}
\item[(i)] Is the map $Id: M_1 \rightarrow M_2$ a strong map? Or at least:
\item[(ii)] Is every basis of $M_2$ contained in a basis of $M_1$, and does every basis of $M_1$ contain a basis of $M_2$?
\end{itemize}

\begin{comment}
From Proposition \ref{argument} we know that an affirmative answer to (i) gives an affirmative answer to (ii). Moreover, in the analogous situation for sets, the answer to (i) is affirmative \cite[Theorem 3.3]{B5}. The proof makes use of multimatroids. We have, however, not been able to find a $q$-analogue and prove (i). The next lemma indicates a possible way, based on \cite[Section 2]{ASS}.

\begin{lemma} \label{restriction}
If $M_1$ and $M_2$ are two $q$-matroids on the the same ground space $E$, and every circuit of $M_1$ is the sum of circuits of $M_2$, then $Id: M_1 \rightarrow M_2$ is a strong map.
\end{lemma}

\begin{proof} By Lemma \ref{dualfact} it is equivalent to prove that $Id: M_2^*\rightarrow M_1^*$ is a strong map. We see, by dualizing the assumptions of the lemma, that every hyperplane of $M_1^*$ is an intersection of hyperplanes of $M_2^*$, that is: a $q$-flat of $M_2^*$. But every $(q$-)flat of $M_1^*$ is an intersection of hyperplanes of $M_1^*$, and hence an intersection  of flats of $M_2^*$,  and hence a flat of $M_2^*$. Hence $Id: M_2^* \rightarrow M_1^*$ is a strong map, and hence $Id: M_1 \rightarrow M_2$ is a strong map too.
\end{proof}
\end{comment}

%We have not been able to utilize Lemma \ref{restriction} to prove question (i) above. Instead we prove something weaker:

Unfortunately, we have not been able to prove question (i) above, but we can prove a positive answer to question (ii).

\begin{proposition} \label{weakversion}
Let $(E,\mathcal{F})$ be a $q$-$\Delta$-matroid. Then every feasible space, in particular each basis of its lower $q$-matroid $M_2$, is contained in a basis of its upper $q$-matroid $M_1$, and every feasible space, in particular each basis of $M_1$, contains a basis of $M_2$.
\end{proposition}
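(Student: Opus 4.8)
The plan is to prove the second assertion --- that every feasible space contains a basis of the lower $q$-matroid $M_2$ --- and to deduce the first from it by duality. By Proposition \ref{extremes} the families $\mathcal{F}_L$ and $\mathcal{F}_U$ of minimum- and maximum-dimensional feasible spaces are precisely the bases of $M_2$ and of $M_1$. Passing to the dual $q$-$\Delta$-matroid $\Delta^*=(E,\mathcal{F}^*)$ of Proposition \ref{qdual} interchanges smallest and largest dimensions, and since taking orthogonal complements of bases of a $q$-matroid yields the bases of its dual, the lower $q$-matroid of $\Delta^*$ is exactly $M_1^*$. Thus, once the second assertion is proved for all $q$-$\Delta$-matroids, applying it to $\Delta^*$ and taking $\perp$ gives that every feasible space of $\Delta$ is contained in a basis of $M_1$. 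So it suffices to fix one feasible space $F$ and produce a basis of $M_2$ inside it.

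Write $d$ for the common dimension of the bases of $M_2$, i.e.\ the least dimension occurring in $\mathcal{F}$. Among all $Y\in\mathcal{F}_L$ I would choose one maximizing $\dim(Y\cap F)$, which is possible since $\mathcal{F}_L$ is finite and nonempty; the goal is to show $Y\subseteq F$. Suppose not, so $Y\cap F\subsetneq Y$ and in particular $Y\neq\{0\}$. Since $Y\cap F$ has codimension at least $1$ in $Y$, I choose a hyperplane $A$ of $Y$ with $Y\cap F\subseteq A$. Then $A\subseteq Y$ gives $A\cap F\subseteq Y\cap F$, while $Y\cap F\subseteq A\cap F$, so $A\cap F=Y\cap F$, and $\dim A=d-1$.

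Now I apply \textbf{(F1)} to the pair $X:=Y$ and $F$ (both feasible) together with the codimension-$1$ subspace $A$ of $X=Y$. Alternative (ii) would give a codimension-$1$ space $Z$ with $Z\cap A\in\mathcal{F}$ of dimension at most $\dim A=d-1<d$, which is impossible because $d$ is the least dimension of a feasible space. Hence alternative (i) holds: there is a codimension-$1$ space $Z\supseteq A$ with $F\not\subseteq Z$ such that $A+z\in\mathcal{F}$ for every $1$-dimensional subspace $z\not\subseteq Z$. As $F\not\subseteq Z$, I may pick a $1$-dimensional $z\subseteq F$ with $z\not\subseteq Z$; then $z\not\subseteq A$, so $A+z$ is feasible of dimension $d$, i.e.\ a basis of $M_2$. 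Since $z\subseteq F$ and $A\cap F=Y\cap F\subseteq F$ with $z\not\subseteq A\cap F$, we obtain $\dim\bigl((A+z)\cap F\bigr)\ge\dim(A\cap F)+1=\dim(Y\cap F)+1$, contradicting the maximality of $\dim(Y\cap F)$. Therefore $Y\subseteq F$, which furnishes the required basis of $M_2$.

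The main obstacle is the dichotomy in \textbf{(F1)}: the ``rotation'' alternative (i) never decreases the dimension, so a naive attempt to descend to a strictly smaller feasible subspace of $F$ stalls. The device that overcomes this is to change the quantity being optimized: instead of minimizing dimension, one maximizes the overlap $\dim(Y\cap F)$ over the bases $Y$ of $M_2$. With $A$ chosen to contain $Y\cap F$, alternative (ii) is excluded on dimension grounds, while alternative (i) can only occur in a form that strictly increases the overlap, which is the contradiction. This mirrors the classical $\Delta$-matroid argument, where one picks a minimum feasible set meeting $F$ maximally and uses symmetric exchange to push it into $F$; here axiom \textbf{(F1)} plays the role of symmetric exchange, and Proposition \ref{qdual} supplies the dual statement for $M_1$.
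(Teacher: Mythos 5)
Your proof is correct and is essentially the paper's argument run on the dual side: the paper fixes a maximum-dimensional feasible space and uses \textbf{(F2)} (ruling out (iv) by maximality, then iterating (iii) with $Z\supseteq Y$ to grow the overlap), whereas you fix an overlap-maximizing minimum-dimensional feasible space and use \textbf{(F1)} (ruling out (ii) by minimality, then using (i) with $z\subseteq F$ to grow the overlap); both then transfer the other half by duality via Proposition \ref{qdual}. The extremal-choice-plus-contradiction phrasing is just a repackaging of the paper's explicit $s$-step iteration, so no substantive difference.
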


\begin{proof}
We want to prove that every feasible space $Y$ is contained in a feasible space $W$ of maximal dimension. Let $X=X_0$ be any feasible space of maximal dimension. If $X$ contains $Y$, done, put $W=X$. This happens in particular if $X=E$, so we may assume that $X$ is strictly contained in $E$.
If $X$ does not contain $Y$, there is a $1$-dimensional space $y$ in $Y-X$. Define $s= \dim Y- \dim (Y \cap X) >0$. 
Let $A=X+y$, so $X$ is of codimension $1$ in $A$.
Then axiom \textbf{(F2)} comes into play, and says that (iii) or (iv) occurs. But (iv) says that $A+z$ is feasible for some $z$. This is impossible, even in the case that $z \subseteq A$, since $\dim A=\dim X + 1$, and $X$ is maximal dimensional among feasible sets.

Hence (iii) holds: there is a $1$-dimensional space $z$ with $z\subseteq A$, $z\not\subseteq Y$ such that for each $Z\subseteq E$ of condimension $1$, $z\not\subseteq Z$, it holds that $A\cap Z$ is feasible. In particular, there is such a $Z$ that contains $Y$, since $z$ is not contained in $Y$. 
Set $X_1=Z \cap A$. We see that $\dim Y - \dim (X_1 \cap Y) \le s-1$, since obviously $\dim Y - \dim (A \cap Y) \le s-1$, and $A \cap Y = A \cap Z \cap Y=X_1 \cap Y$, so $\dim Y - \dim (Y \cap X_1) \le s-1$.   
(We have ``eaten'' $y \in Y-X$.) After $s$ steps like this one, we end up with a feasible space $W=X_s$ containing $Y$.

Since the orthogonal complements of feasible spaces $Y$ of a $q$-$\Delta$-matroid constitute the feasible sets of a new $q$-$\Delta$-matroid, all complements of such feasible set are contained in a basis of the upper matroid of this new $q$-$\Delta$, by the conclusion above. But these new upper bases are complements of the bases of the lower matroid of the original $q$-$\Delta$-matroid, so the $Y$ we started with contains such a basis of the lower matroid of the original $q$-$\Delta$-matroid also, in other words $Y$ contains a feasible set of minimal cardinality.
\end{proof}

\begin{corollary} \label{consolation}
The set of feasible spaces of a  $q$-$\Delta$-matroid is a subset of the set feasible subspaces of a weak $q$-g-matroid, namely the one formed by its own upper and lower $q$-matroids.
\end{corollary}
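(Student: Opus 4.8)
The plan is to read Corollary \ref{consolation} as an almost immediate repackaging of Proposition \ref{weakversion}, so the real work has already been done and this final statement is essentially a matter of unwinding the definitions. First I would recall, from Definition \ref{strong1}, that the weak $q$-$g$-matroid associated to an ordered pair $(M_1, M_2)$ of $q$-matroids on $E$ (with every basis of $M_2$ contained in a basis of $M_1$ and every basis of $M_1$ containing a basis of $M_2$) has as its feasible spaces exactly those subspaces $F$ with $B_2 \subseteq F \subseteq B_1$ for some basis $B_2$ of $M_2$ and some basis $B_1$ of $M_1$. So to establish the corollary I would take $M_1$ and $M_2$ to be precisely the upper and lower $q$-matroids of the given $q$-$\Delta$-matroid $\Delta = (E,\mathcal{F})$, whose existence as $q$-matroids is guaranteed by Proposition \ref{extremes}.

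The key step is then to verify that this pair $(M_1, M_2)$ is admissible in the sense of Definition \ref{strong1}, i.e. that every basis of $M_2$ lies in a basis of $M_1$ and every basis of $M_1$ contains a basis of $M_2$. But this is exactly the content of Proposition \ref{weakversion}: a basis of $M_2$ is a feasible space of minimal dimension, hence by that proposition it is contained in a feasible space of maximal dimension, which is a basis of $M_1$; dually, a basis of $M_1$ is a feasible space of maximal dimension, and the same proposition shows it contains a feasible space of minimal dimension, i.e. a basis of $M_2$. Thus $(M_1, M_2)$ is an admissible pair and determines a genuine weak $q$-$g$-matroid, call it $(E,\mathcal{G})$.

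It remains to show the set-theoretic inclusion $\mathcal{F} \subseteq \mathcal{G}$. I would argue that an arbitrary feasible space $F \in \mathcal{F}$ satisfies $B_2 \subseteq F \subseteq B_1$ for suitable bases of $M_2$ and $M_1$. Applying Proposition \ref{weakversion} to $F$ itself (it applies to \emph{every} feasible space, not merely to the extremal ones) gives a feasible space $W$ of maximal dimension with $F \subseteq W$; this $W$ is by definition a basis $B_1$ of the upper $q$-matroid $M_1$. The dual half of the same proposition, applied to $F$, yields a feasible space of minimal dimension contained in $F$, which is a basis $B_2$ of the lower $q$-matroid $M_2$, giving $B_2 \subseteq F$. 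Hence $B_2 \subseteq F \subseteq B_1$, so $F \in \mathcal{G}$, proving $\mathcal{F}\subseteq\mathcal{G}$.

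The only subtlety I anticipate is bookkeeping rather than a genuine obstacle: I must be careful that Proposition \ref{weakversion} is invoked for a general feasible space $F$ and not just for bases of the extremal $q$-matroids, since the corollary asserts the inclusion for the whole family $\mathcal{F}$. The proposition is stated in exactly this generality (\textbf{``every feasible space''}), so no extra argument is needed, but one should not accidentally restrict attention to the extremal layers, which would only reprove admissibility and leave the intermediate feasible spaces unaccounted for. With that caveat the corollary follows directly, and the containment may well be strict, which is precisely why the statement asserts only a subset relation rather than equality.
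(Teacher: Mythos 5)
Your proposal is correct and follows exactly the route the paper intends: the corollary is stated without proof precisely because it is an immediate unwinding of Proposition \ref{weakversion}, whose ``every feasible space'' phrasing gives both the admissibility of the pair $(M_1,M_2)$ and the sandwiching $B_2 \subseteq F \subseteq B_1$ for each $F \in \mathcal{F}$. Your caveat about applying the proposition to all feasible spaces rather than only the extremal layers is exactly the right point to be careful about, and nothing further is needed.
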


 It is important to note that Proposition \ref{weakversion} (also in view of Proposition \ref{weakgnotdelta} and Corollary \ref{consolation}) does not imply  that every $q$-$\Delta$-matroid is a weak $q$-g-matroid, since such a weak $q$-g-matroid has $\it{all}$ spaces contained in a basis space of $M_1$ and containing a basis space of $M_2$ as its set of feasible spaces, and that is not true for $q$-$\Delta$-matroids in general. This observation paves the way for the following definition.
 
\begin{definition} \label{saturated}
A $q$-$\Delta$-matroid is \emph{saturated} if for every triple $Y \subseteq Z \subseteq X$ of spaces, where $X$ and $Y$ are feasible, we have $Z$ feasible also.
\end{definition}

We have:
\begin{proposition} \label{something}
A $q$-$\Delta$-matroid is saturated if it satisfies axioms \textbf{(F3)} or \textbf{(F4)}.
\end{proposition}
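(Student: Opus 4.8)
The plan is to show that if a $q$-$\Delta$-matroid satisfies \textbf{(F3)} (respectively \textbf{(F4)}), then for any triple $Y \subseteq Z \subseteq X$ with $X, Y$ feasible, the intermediate space $Z$ must also be feasible. The key insight is that \textbf{(F3)} and \textbf{(F4)} offer a cleaner dichotomy than \textbf{(F1)} and \textbf{(F2)}: the alternatives (v) ``$A \in \mathcal{F}$'' and (vi) ``$A \in \mathcal{F}$'' directly assert feasibility of the modified space, rather than merely the existence of some codimension-$1$ intersection. I would exploit this by building a descending or ascending chain of feasible spaces from $X$ (or $Y$) toward $Z$, one dimension at a time, and use the (v)/(vi) branch to keep every space in the chain feasible.

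First I would treat the case of \textbf{(F3)}. Given $Y \subseteq Z \subseteq X$ with $X, Y$ feasible, I would induct downward on $\dim X - \dim Z$. If this difference is zero, then $Z = X$ is feasible and we are done. Otherwise choose a codimension-$1$ subspace $A$ of $X$ with $Z \subseteq A$ (possible since $\dim Z < \dim X$). Applying \textbf{(F3)} to the pair $X, Y$ and this $A$, we get either alternative (i) or (v). The hope is to argue that (v), namely $A \in \mathcal{F}$, is what we need: then $A$ is a feasible space with $Y \subseteq Z \subseteq A \subseteq X$ and $\dim X - \dim A = 1$, so induction applied to the triple $Y \subseteq Z \subseteq A$ (with $A$ now playing the role of the larger feasible space, noting $Y \subseteq A$ since $Y \subseteq Z \subseteq A$) completes the argument. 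The symmetric treatment of \textbf{(F4)}, building an ascending chain from $Y$ up to $Z$ using alternative (vi), would then follow by the dual reasoning, or directly by passing to orthogonal complements via Proposition \ref{qdual}, since \textbf{(F4)} for $\mathcal{F}$ corresponds to \textbf{(F3)} for $\mathcal{F}^\perp$ and saturation is a self-dual property.

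The main obstacle will be ruling out, or otherwise handling, the branch where alternative (i) holds instead of (v). Alternative (i) only guarantees the existence of a codimension-$1$ space $Z' \subseteq E$ with $A \subseteq Z'$, $Y \not\subseteq Z'$, and $A + z \in \mathcal{F}$ for all $z \not\subseteq Z'$; it says nothing directly about $A$ itself being feasible, and the spaces $A+z$ it produces have the same dimension as $X$, not the smaller dimension we want. The delicate point is therefore to show that under the constraint $Y \subseteq A$ (which we have arranged, since $Y \subseteq Z \subseteq A$), alternative (i) cannot arise, or that it forces (v) to hold as well. I would try to use the condition $Y \not\subseteq Z'$ in (i) against the fact that $Y \subseteq A \subseteq Z'$: if $Y \subseteq A$ and $A \subseteq Z'$, then $Y \subseteq Z'$, contradicting $Y \not\subseteq Z'$. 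This contradiction should eliminate branch (i) entirely whenever $Y \subseteq A$, leaving only branch (v), which is exactly what the inductive step requires. Verifying that this containment argument is airtight, and that the analogous incidence relation rules out branch (iii) in the \textbf{(F4)} case, is the crux of the proof.
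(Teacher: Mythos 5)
Your proof is correct and follows essentially the same route as the paper's: choose a codimension-$1$ space $A$ of $X$ with $Y\subseteq Z\subseteq A$, observe that alternative (i) of \textbf{(F3)} would force a codimension-$1$ space $Z'$ with $A\subseteq Z'$ yet $Y\not\subseteq Z'$, which is impossible since $Y\subseteq A$, so (v) holds and $A$ is feasible; then iterate down to $Z$ and handle \textbf{(F4)} by duality. The containment contradiction you flag as the crux is exactly the argument the paper uses.
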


\begin{proof}
Assume \textbf{(F3)} holds. Let $X$ and $Y$ be two feasible spaces, with $Y \subseteq X$.  
We will prove that every codimension $1$ space $A$ in $X$, containing $Y$, is feasible. If so, by iterating the process, we can show that every space $Z$, with $ Y \subseteq Z \subseteq X$ is feasible, and hence the $q$-$\Delta$-matroid is saturated.
We see that $A$ satisfies the assumption of \textbf{(F3)}: $A$ has codimension $1$ in $X$.
Then by (v) of \textbf{(F3)} we have: $A$ is feasible, and we are done, or: (i) holds. But (i) implies that there exist a $Z \subseteq E$ of codimension $1$ in $E$ such that $A \subseteq Z$, and $Z$ does not contain $Y$. But this is impossible, since $Y \subseteq A$. Hence (i) cannot hold, and (v) holds, and $A$ is feasible. 

Arguing in a dual way, we see that \textbf{(F4)} also implies that the $q$-$\Delta$-matroid is saturated.
\end{proof}

If we had been able to prove a stronger version of Proposition \ref{weakversion}, namely that for any $q$-$\Delta$-matroid, the map: $M_1 \rightarrow M_2$ is a strong map (the $q$-analogue of \cite[Theorem 3.3]{B5}), then we would have been able to prove that the converse of Proposition \ref{something} holds. 

To be precise: Under the hypothetical condition that  for any $q$-$\Delta$-matroid, the map: $M_1 \rightarrow M_2$ is a strong map, we have:  If a $q$-$\Delta$-matroid is saturated, then it satisfies \textbf{(F3)}  or \textbf{(F4)}, in fact both, since it is then a $q$-g-matroid (using the proof of Proposition \ref{main?} once more). In the classical case it has been proven that for a $\Delta$-matroid  $\Delta$ the map $Id$ from the upper to the lower matroid of $\Delta$ indeed is a strong map.

We finish this section with Figure \ref{diagram2}, that shows an overview of the relations between different objects as discussed in this section.

	\begin{figure}[ht]
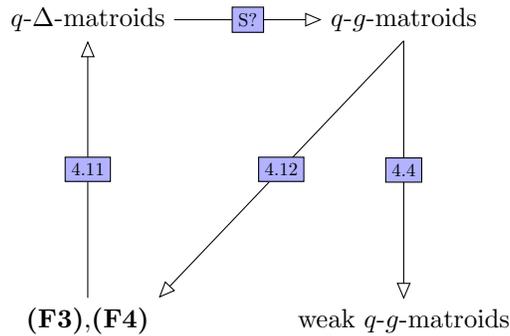

		\tikz[every node/.style={}, xscale=2.1]
		{
		 	\node (qd) at (2,2)  { saturated $q$-$\Delta$-matroids};
			 \node (f3f4) at (2,-2)  { \textbf{(F3)},\textbf{(F4)} };
		 	\node (qg) at (4,2)  { $q$-$g$-matroids  };
			 \node (wqg) at (4,-2)  { weak $q$-$g$-matroids };
		% \begin{scope}[transform canvas={yshift=0.5em}]
			\draw [ -open triangle 45](qd) -- node[auto, rectangle, draw,fill=blue!30, scale=0.7,anchor=center] {?}  (qg);
%\end{scope}
   
		%	\draw [ -open triangle 45](qg.south west) --      (qd.south east);	
%\begin{scope}[transform canvas={xshift=1em}]
 % \draw [-open triangle 45] (qd) --node[auto, rectangle, draw,fill=blue!30, scale=0.7,anchor=center] {S?}  (f3f4);
%\end{scope}
\draw [-open triangle 45](f3f4) --  node[auto, rectangle, draw,fill=blue!30, scale=0.7,anchor=center] {\ref{F3F4Delta}/\ref{something}}  (qd);

\draw [-open triangle 45](qg.south) -- node[auto, rectangle, draw,fill=blue!30, scale=0.7,anchor=center] {\ref{strongisweak}}   (wqg.north);
\draw [-open triangle 45](qg.south) --node[auto, rectangle, draw,fill=blue!30, scale=0.7,anchor=center] {\ref{main?}}    (f3f4.north east);
		}
		\caption{Relations between various structures. An arrow reads ``is a''. The `?' indicates that we do not know if this relation is true.}
		\label{diagram2} 
	\end{figure}

\section{Representability and rank} \label{five}

As potential topics for further investigation, we describe how one might consider representability and rank for a $q$-$\Delta$-matroid.

\subsection{Pairs of Codes}

Let $C_2 \subset C_1$ be an inclusion of two 
linear codes with the Hamming metric over a field $\mathbb{F}_q$, and let $\mathcal{C}_2 \subset \mathcal{C}_1$ be an inclusion of two $\mathbb{F}_{q^m}$-linear rank-metric codes over $\mathbb{F}_q$. Let $r_2,r_1$ be the rank functions of the associated matroids $M_2$ and $M_1$ of $C_2,C_1$, respectively, and let $\rho_2,\rho_1$ be the rank functions of the associated $q$-matroids $\mathcal{M}_2$ and $\mathcal{M}_1$ of  $\mathcal{C}_2,\mathcal{C}_1,$ respectively. The following is well known \cite{oxley2006matroid,JP18}.

\begin{proposition}
Let $C$ be a code in the Hamming metric with dual $C^\perp$ (with respect to the standard   inner product in $\mathbb{F}_q$) and let $\mathcal{C}$ be a $\mathbb{F}_{q^m}$-linear code in the rank metric with dual $\mathcal{C}^\perp$ (with respect to some non-degenerate bilinear form on $\mathbb{F}_{q^m}$). Then we have:
\begin{enumerate}
\item The dual matroid $M_C^*$ of $M_C$ is the matroid of $M_{C^{\perp}}$.
\item The dual $q$-matroid $\mathcal{M}_{\mathcal{C}}^*$ of $\mathcal{M}_{\mathcal{C}}$ is the $q$-matroid of $\mathcal{M}_{\mathcal{C}^{\perp}}$ for a $\mathbb{F}_{q^m}$-linear rank-metric code $\mathcal{C}$.
\end{enumerate}
\end{proposition}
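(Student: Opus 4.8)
The plan is to observe that both items assert that a duality operation on the (\emph{$q$-})matroid side corresponds to passing to the dual code, and that in each case the object is built from a generator matrix whose parity-check matrix generates the dual code. I would therefore reduce each statement to the equality of two rank functions defined on the \emph{same} ground set: the rank function of the matroid-theoretic dual (given by an explicit formula) and the rank function of the (\emph{$q$-})matroid attached to the dual code. Since a (\emph{$q$-})matroid is determined by its ground set together with its rank function, matching the rank functions proves the equality.

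For (1) I would recall that $M_C$ is the column matroid of a generator matrix $G\in\Fq^{k\times n}$ of $C$, so that $r_C(S)=\rank(G|_S)$ and $r_C(E)=\dim C=k$, while a parity-check matrix $H$ of $C$ is a generator matrix of $C^\perp$, whence $M_{C^\perp}$ is the column matroid of $H$. The matroid dual has rank function $r_C^*(S)=|S|-k+r_C(E\setminus S)$, so the claim becomes the identity $r_{C^\perp}(S)=|S|-k+r_C(E\setminus S)$. This is the classical statement that a matrix whose rows span the orthogonal complement of $\rs(G)$ represents the dual matroid \cite{oxley2006matroid}; one verifies it by bringing $G$ to reduced form $[I_k\mid A]$ (row operations change neither $C$ nor the represented matroid), taking $H=[-A^\top\mid I_{n-k}]$, and computing both sides directly. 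Equality of rank functions then gives $M_C^*=M_{C^\perp}$.

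For (2) the argument is the formal $q$-analogue, following \cite{JP18}. Recall that $\mathcal{M}_{\mathcal{C}}=(E,\rho)$ has ground space $E=\Fq^n$ and rank function $\rho(U)=\rank(GY^\top)$, where $G\in\mathbb{F}_{q^m}^{k\times n}$ generates $\mathcal{C}$, $Y\in\Fq^{(\dim U)\times n}$ is a basis matrix of $U$, and the rank is taken over $\mathbb{F}_{q^m}$; in particular $\rho(E)=\rank G=k$. Writing $H$ for a parity-check matrix of $\mathcal{C}$ (so $\mathcal{C}^\perp=\rs(H)$ over $\mathbb{F}_{q^m}$ and $GH^\top=0$), Definition \ref{defdual} gives $\rho^*(A)=\dim A-k+\rho(A^\perp)$, and the statement $\mathcal{M}_{\mathcal{C}}^*=\mathcal{M}_{\mathcal{C}^\perp}$ reduces to
\[ \rank\!\big(HY_A^\top\big)=\dim A-k+\rank\!\big(GY_{A^\perp}^\top\big), \]
with $Y_A,\,Y_{A^\perp}$ basis matrices of $A$ and of its orthogonal complement $A^\perp$ in $\Fq^n$. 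I would prove this by a rank computation that uses two orthogonality relations at once: the rows of $Y_A$ and $Y_{A^\perp}$ span complementary subspaces of $\Fq^n$, while $\rs(G)$ and $\rs(H)$ are complementary in $\mathbb{F}_{q^m}^n$. The main obstacle, and the only genuinely new point beyond (1), is bookkeeping between these two distinct forms: the complement $A^\perp$ in the dual $q$-matroid formula lives in $\Fq^n$, whereas $\mathcal{C}^\perp$ is taken with respect to a non-degenerate form on $\mathbb{F}_{q^m}^n$, and one must check that applying the two operations together reproduces $\rho^*$. This compatibility is exactly the computation carried out in \cite{JP18}, which I would cite for the detailed verification rather than reproduce; once the rank functions are matched on the common ground space $E$, the equality $\mathcal{M}_{\mathcal{C}}^*=\mathcal{M}_{\mathcal{C}^\perp}$ follows.
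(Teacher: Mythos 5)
Your proposal is correct and matches the paper's treatment: the paper gives no proof of this proposition at all, simply declaring it ``well known'' with citations to \cite{oxley2006matroid} and \cite{JP18}, which are exactly the sources you reduce to after matching rank functions. Your sketch (dual rank formula versus the matroid of the parity-check matrix for (1), and the analogous $q$-computation from \cite{JP18} for (2), including the remark about the two different orthogonal complements) is a faithful and somewhat more explicit version of the same argument.
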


We can use this result to prove the following about nested codes.

\begin{proposition} \label{pairs}
\begin{enumerate}
\item \label{hamming} If $M_1,M_2$ are matroids of  generator matrices of linear error-correcting codes $C_1,C_2$ for the Hamming distance, and $C_2$ is an $F_q$-subspace of $C_1$, then $Id: M_1\rightarrow M_2$  is a strong map of matroids.
\item \label{rankmetric} If $\M_1,\M_2$ are matroids of generator matrices of Gabidulin rank-metric codes $C_1,C_2$, and $C_2$ is an $F_{q^m}$-subspace of $C_1$, then   $Id: \mathcal{M}_1\rightarrow \mathcal{M}_2$ is a strong map of $q$-matroids.
\end{enumerate}
\end{proposition}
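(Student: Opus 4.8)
The plan is to reduce both parts to the rank-difference characterization of strong maps and then verify a single inequality by a short linear-algebra argument. For part~(\ref{rankmetric}) this characterization is exactly Proposition~\ref{equiv}, so it suffices to prove
\[ \rho_1(X)-\rho_1(Y)\ \ge\ \rho_2(X)-\rho_2(Y)\qquad\text{for all } Y\subseteq X\subseteq E=\mathbb{F}_q^n. \]
For part~(\ref{hamming}) I would invoke the classical analogue of Proposition~\ref{equiv} for ordinary matroids (see \cite{oxley2006matroid}), which asks for the same inequality with $r_1,r_2$ in place of $\rho_1,\rho_2$ and subsets $Y\subseteq X$ of the ground set. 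Since $\mathcal{C}_2\subseteq\mathcal{C}_1$ (resp. $C_2\subseteq C_1$), the two parts then become formally identical.

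The key device is a shortening formula obtained by one application of rank--nullity. In the Hamming case the projection $\pi_S\colon C_i\to\mathbb{F}_q^{S}$ onto the coordinates of a subset $S$ has image of dimension $r_i(S)$ and kernel $C_i\langle S\rangle=\{c\in C_i: c_j=0\text{ for all }j\in S\}$, so that $r_i(S)=k_i-\dim C_i\langle S\rangle$ with $k_i=\dim C_i$. In the rank-metric case I would prove the parallel statement $\rho_i(V)=k_i-\dim_{\mathbb{F}_{q^m}}\mathcal{C}_i(V)$, where $\mathcal{C}_i(V)=\{c\in\mathcal{C}_i: c\cdot v=0\text{ for all }v\in V\}$: writing $V=\rs(Y)$ for some $Y$ with independent rows, $\rho_i(V)=\rank(G_iY^{\top})$ is the dimension of the image of the evaluation map $\mathcal{C}_i\to\mathbb{F}_{q^m}^{\dim V}$, $c\mapsto(c\cdot v)_{v\in V}$, whose kernel is exactly $\mathcal{C}_i(V)$.

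Substituting either formula converts the target inequality into the single assertion that $W\mapsto\dim\big(\mathcal{C}_1(W)/\mathcal{C}_2(W)\big)$ is order-reversing in $W$. Concretely, the inequality for $Y\subseteq X$ rearranges to $\dim\mathcal{C}_1(Y)-\dim\mathcal{C}_2(Y)\ge\dim\mathcal{C}_1(X)-\dim\mathcal{C}_2(X)$. Because $\mathcal{C}_2\subseteq\mathcal{C}_1$ we have $\mathcal{C}_2(W)\subseteq\mathcal{C}_1(W)$, and for $Y\subseteq X$ one checks directly that $\mathcal{C}_2(X)=\mathcal{C}_1(X)\cap\mathcal{C}_2(Y)$ together with $\mathcal{C}_1(X)+\mathcal{C}_2(Y)\subseteq\mathcal{C}_1(Y)$. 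The second isomorphism theorem then yields an injection
\[ \mathcal{C}_1(X)/\mathcal{C}_2(X)\ \cong\ \big(\mathcal{C}_1(X)+\mathcal{C}_2(Y)\big)/\mathcal{C}_2(Y)\ \hookrightarrow\ \mathcal{C}_1(Y)/\mathcal{C}_2(Y), \]
which is precisely the desired monotonicity. The Hamming case runs verbatim with $C_i\langle\cdot\rangle$ replacing $\mathcal{C}_i(\cdot)$.

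The part that needs care, rather than a deep obstacle, is the rank-metric shortening formula: I must use the precise definition of the $q$-matroid $\mathcal{M}_{\mathcal{C}}$ attached to an $\mathbb{F}_{q^m}$-linear code (as in \cite{JP18,CJ2}) and check that $\rank(G_iY^{\top})$ is independent of the chosen basis $Y$ of $V$, so that $\rho_i$ is well defined and the evaluation-map computation is legitimate. One should also record that the hypotheses ($C_2$ an $\mathbb{F}_q$-subspace of $C_1$, $\mathcal{C}_2$ an $\mathbb{F}_{q^m}$-subspace of $\mathcal{C}_1$) are exactly what makes $\mathcal{C}_2(W)\subseteq\mathcal{C}_1(W)$ an inclusion of spaces over the relevant field, so that the quotients above make sense; everything else is routine bookkeeping with rank--nullity and the isomorphism theorem.
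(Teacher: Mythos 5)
Your proposal is correct and follows essentially the same route as the paper: both reduce to the rank-difference criterion of Proposition~\ref{equiv} (and its classical analogue), rewrite the ranks via dimensions of shortened codes, and establish the resulting inequality by exhibiting an injection of quotient spaces. The only difference is cosmetic — you quotient the larger code by the smaller one at a fixed space and invoke the second isomorphism theorem, while the paper fixes each code and builds the injection $C_2(Y^{\perp})/C_2(X^{\perp})\hookrightarrow C_1(Y^{\perp})/C_1(X^{\perp})$ explicitly; the two dimension inequalities are the same after rearrangement, with your identity $\mathcal{C}_1(X)\cap\mathcal{C}_2(Y)=\mathcal{C}_2(X)$ playing the role of the paper's $C_2\cap C_1(X^{\perp})=C_2(X^{\perp})$.
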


\begin{proof}
We first prove part (\ref{rankmetric}): We will use Proposition \ref{equiv} and show that $\rho_1(X)-\rho_1(Y) \ge \rho_2(X)-\rho_2(Y)$, whenever $Y \subseteq X$ for subspaces $X$, $Y$ of $E$. Calculations of dimensions will be over $\mathbb{F}_{q^m}$. We denote by $C(J)$ all codewords in a code $C$ with support contained in $J$.

We have $\rho_1(X) -\rho_1(Y)= (\dim C_1 - \dim C_1(X^{\perp})) -(\dim C_1 - \dim C_1(Y^{\perp}))= \dim C_1(Y^{\perp})-\dim C_1(X^{\perp}).$ Likewise $\rho_2(X) -\rho_2(Y)=\dim C_2(Y^{\perp})-\dim C_2(X^{\perp})$.

Clearly it is enough to show: $\dim A \le \dim B$, where $A = C_2(Y^{\perp})/C_2(X^{\perp})$ and $B=  C_1(Y^{\perp})/C_1(X^{\perp})$. This we do by describing an injective, linear map from $A$ to $B$.
First send each element of $C_2(Y^{\perp})$ to $C_1(Y^{\perp})$ and then send the image to its class modulo $C_1(Y^{\perp})$ in $B$. This gives a map $C_2(Y^{\perp}) \rightarrow B.$ It is clear that if $v-w \in C_2(X^{\perp})$, then $v-w \in C_1(X^{\perp})$ also, so $v$ and $w$ are mapped to the same element of $B$. Hence this map can be viewed as a linear map $\phi:A \rightarrow B.$ If $v$ and $w$ are in different cosets relative to $C_2(X^{\perp})$, then $v-w$ is not contained in $C_2(X^{\perp})$, and then it is also not contained in $C_1(X^{\perp})$, since if $v-w \in C_1(X^{\perp})$, then $v-w \in C_2 \cap C_1(X^{\perp})= C_2(X^{\perp})$. Hence $\phi$ is injective, and $\dim A \le \dim B$.

Part ($\ref{hamming}$) is proved in an analogous way, recalling that $r_i(X)= \dim C -\dim C(E-X)$ for subsets $X \subset E$, for $i=1,2$. Calculations of dimensions will be over $\mathbb{F}_{q}$ here.
\end{proof}

By Proposition \ref{pairs} we see that pairs of codes as above define set/space systems associated to pairs of ($q$-)matroids, with $Id$ a strong map. Thus they do  not only determine ($q$-)demi-matroids, as in \cite[Page 987]{BJM} (and its $q$-counterpart), but also ($q$-)$\Delta$-matroids, that are even ($q$-)$g$-matroids. It is natural for a given $\Delta$-matroid to think of the existence of such a pair as in part ($\ref{hamming}$) giving rise to it, as representability of the $\Delta$-matroid over $\mathbb{F}_q$, and then as a $g$-matroid in the sense of \cite[Page 157]{B3}. Likewise one can think of an analogous pair of $\mathbb{F}_{q^m}$-linear rank-metric codes as representability of a $q$-$\Delta$-matroid over the field extension $\mathbb{F}_{q^m}/\mathbb{F}_q$.

In fact one could define two different notions of representability of $q$-$\Delta$-matroids in terms of pairs of codes. The weak version is that the upper $q$-matroid comes from an $\mathbb{F}_{q^m}$-linear rank-metric code $C_1$ and that the lower $q$-matroid comes from an $\mathbb{F}_{q^m}$-linear rank-metric code $C_2,$ where $C_2 \subset C_1$.

The strong version is that, in this situation, the $q$-$\Delta$-matroid in question is the entire $q$-$g$-matroid, whose feasible sets are the ones that are spanning for the lower matroid and independent for the upper one.
 
It should be noted that in the ``classical'' literature about $\Delta$-matroids, representability is not defined in terms of pairs of linear codes with Hamming distance. It is defined in terms of skew-symmetric matrices. See \cite{B1}, \cite[Subsection 5.7]{CMNR}, so our viewpoint on representability is a different one.

\subsection{Rank}

For a classical $\Delta$-matroid one has a rank function:
\[\rho(A)=|E|-\min \{|A \triangle F| : F \in \mathcal{F}\}.\]
Caution: This rank function does not specialize to the rank function of a matroid in the special case where the $\Delta$-matroid is a matroid. In the $q$-analogue, we can define the following.
\begin{definition} \label{defrank}
Let $\Delta=(E,\mathcal{F})$ be a $q$-$\Delta$-matroid. Then the rank fuction of $\Delta$ is defined by
\[ \rho(A)=\rho_\Delta(A)=n-\min \{\dim A +\dim F - 2 \dim (A \cap F) : F \in \mathcal{F}\}. \]
\end{definition}

Analogously to the case of classical $\Delta$-matroids, we then obtain: 
\begin{proposition} \label{rank}
For all subspaces $A\subseteq E$ we have:
\begin{enumerate}
\item \label{rank1} $A\in\mathcal{F}$ if and only if $\rho_\Delta(A)=n$.
\item \label{rank2} $\rho_{\Delta^*}(A^{\perp})=\rho_\Delta(A)$.
\end{enumerate}
\end{proposition}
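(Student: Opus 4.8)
The plan is to isolate the nonnegative integer $d(A,F) := \dim A + \dim F - 2\dim(A\cap F)$ appearing in Definition \ref{defrank}, which is the $q$-analogue of the size $|A\triangle F|$ of a symmetric difference, and to establish two elementary facts about it: that it vanishes exactly when $A=F$, and that it is invariant under orthogonal complementation.

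First I would handle part~(\ref{rank1}). Writing $d(A,F)=\bigl(\dim A-\dim(A\cap F)\bigr)+\bigl(\dim F-\dim(A\cap F)\bigr)$ exhibits it as a sum of two nonnegative integers, since $A\cap F$ is a subspace of both $A$ and $F$. Hence $d(A,F)\ge 0$, with equality if and only if $\dim(A\cap F)=\dim A=\dim F$, that is, if and only if $A=A\cap F=F$. Consequently the minimum $\min\{d(A,F):F\in\mathcal{F}\}$ equals $0$ precisely when $A$ itself lies in $\mathcal{F}$. Since $\rho_\Delta(A)=n$ is by definition equivalent to this minimum being $0$, we conclude $A\in\mathcal{F}$ if and only if $\rho_\Delta(A)=n$.

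For part~(\ref{rank2}) the crucial step is the identity $d(A^\perp,F^\perp)=d(A,F)$. Here I would invoke the standard properties of the fixed nondegenerate bilinear form on $E=\mathbb{F}^n$: namely $\dim G^\perp=n-\dim G$ for every subspace $G$, and $(A+F)^\perp=A^\perp\cap F^\perp$, together with the modular identity $\dim(A+F)=\dim A+\dim F-\dim(A\cap F)$. Substituting $\dim A^\perp=n-\dim A$, $\dim F^\perp=n-\dim F$, and $\dim(A^\perp\cap F^\perp)=\dim(A+F)^\perp=n-\dim(A+F)$ into $d(A^\perp,F^\perp)=\dim A^\perp+\dim F^\perp-2\dim(A^\perp\cap F^\perp)$ and simplifying, the terms in $n$ cancel and one is left with exactly $\dim A+\dim F-2\dim(A\cap F)=d(A,F)$.

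Finally I would combine this invariance with the defining bijection $F\mapsto F^\perp$ between $\mathcal{F}$ and $\mathcal{F}^*$. Since $\rho_{\Delta^*}(A^\perp)=n-\min\{d(A^\perp,F^\perp):F\in\mathcal{F}\}$ and each summand equals $d(A,F)$, the two minima coincide, giving $\rho_{\Delta^*}(A^\perp)=\rho_\Delta(A)$. I expect no genuine obstacle in this argument; it is elementary linear algebra throughout, and the only point requiring mild care is the identity $(A+F)^\perp=A^\perp\cap F^\perp$, which is precisely where nondegeneracy of the form is used.
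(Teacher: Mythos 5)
Your proposal is correct and follows essentially the same route as the paper: part~(1) via the decomposition of $\dim A+\dim F-2\dim(A\cap F)$ into two nonnegative terms, and part~(2) via the dimension formula for orthogonal complements combined with modularity, yielding the termwise identity $d(A^\perp,F^\perp)=d(A,F)$ and hence equality of the two minima. The only cosmetic difference is that you invoke $(A+F)^\perp=A^\perp\cap F^\perp$ where the paper uses the equivalent $(A\cap F)^\perp=A^\perp+F^\perp$; the computation is otherwise identical.
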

\begin{proof}
For part (\ref{rank1}) we see that $A \in \mathcal{F}$ if and only if there is an $F \in \mathcal{F}$ such that 
\begin{align*}
\lefteqn{\dim A +\dim F - 2 \dim (A \cap F)} \\
&=(\dim A  -  \dim (A \cap F))+(\dim F -  \dim (A \cap F)) \\ & =0.
\end{align*}
This happens if and only if $A \cap F $ is equal to both $A$ and $F$, that is $A=F$.
 
For part (\ref{rank2}) we set $a=\dim A$ and $f=\dim F$. Then $n- \rho_{\Delta^*}(A^{\perp})$ is the minimum, taken over the $F \in \mathcal{F}$, of 
\begin{align*}
\lefteqn{(n-a) + (n-b) -2\dim(A^{\perp} \cap F^{\perp})} \\
&=(n-a) + (n-b) -2((n-a)+(n-b)-\dim(A^{\perp}+F^{\perp}) \\
&=a+f-2n+2\dim(A^{\perp}+F^{\perp}) \\
&=a+f-2n+2(n-\dim(A \cap F)) \\
&=a+f-2\dim (A \cap F).
\end{align*}
But the minimum of this, as $F$ varies over $\mathcal{F}$, is $n-\rho_\Delta(A)$.
Hence $\rho_\Delta(A)=\rho_{\Delta^*}(A^{\perp})$.
\end{proof}

Furthermore, we have the following relation between the ranks of the upper and lower $q$-matroid of a $q$-$\Delta$-matroid, and the rank of the $q$-$\Delta$-matroid itself.
\begin{proposition}
Let $\Delta=(E,\mathcal{F})$ be a $q$-$\Delta$-matroid. Let $\Delta_U$ and $\Delta_L$ be the upper and lower $q$-matroid of $\Delta$, respectively. In analogue with \cite[Lemma 5.38]{CMNR} we obtain
\[ r(\Delta_U)=\rho_\Delta(E)\text{ and } r(\Delta_L^*)=\rho_\Delta(\{0\}).\]
\end{proposition}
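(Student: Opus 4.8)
The plan is to evaluate each side of the two equalities directly from Definition \ref{defrank} and then to match the resulting expressions against the ranks of the relevant $q$-matroids, using Proposition \ref{extremes} and the dual rank formula of Definition \ref{defdual}. First I would compute $\rho_\Delta(E)$. Setting $A=E$ in Definition \ref{defrank}, every feasible space satisfies $E\cap F=F$, so the quantity to be minimized collapses to $\dim E+\dim F-2\dim F=n-\dim F$. Minimizing $n-\dim F$ over $F\in\mathcal{F}$ amounts to maximizing $\dim F$, giving $\rho_\Delta(E)=n-(n-\max_{F\in\mathcal{F}}\dim F)=\max_{F\in\mathcal{F}}\dim F$. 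By Proposition \ref{extremes}, the upper $q$-matroid $\Delta_U$ has the feasible spaces of maximal dimension as its bases; since all bases of a $q$-matroid share the same dimension, namely its rank, we obtain $r(\Delta_U)=\max_{F\in\mathcal{F}}\dim F=\rho_\Delta(E)$.

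For the second equality I would first compute $\rho_\Delta(\{0\})$. Setting $A=\{0\}$ we have $\{0\}\cap F=\{0\}$, so the quantity to be minimized is $0+\dim F-0=\dim F$, and minimizing over $\mathcal{F}$ yields $\min_{F\in\mathcal{F}}\dim F$; hence $\rho_\Delta(\{0\})=n-\min_{F\in\mathcal{F}}\dim F$. On the matroid side, Proposition \ref{extremes} gives that the lower $q$-matroid $\Delta_L$ has the feasible spaces of minimal dimension as its bases, so $r(\Delta_L)=\min_{F\in\mathcal{F}}\dim F$. Applying the dual rank formula of Definition \ref{defdual} at $A=E$, and using $E^\perp=\{0\}$ together with $r_{\Delta_L}(\{0\})=0$, gives $r(\Delta_L^*)=r_{\Delta_L}^*(E)=\dim E-r(\Delta_L)+r_{\Delta_L}(E^\perp)=n-\min_{F\in\mathcal{F}}\dim F$, which is exactly $\rho_\Delta(\{0\})$.

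I do not expect a genuine obstacle here: the whole argument is a short bookkeeping computation. The only points requiring care are, first, invoking Proposition \ref{extremes} so that the bases of $\Delta_U$ and $\Delta_L$ are correctly identified with the extremal feasible spaces and their common dimension with the matroid rank; and second, evaluating the dual rank function at $E$, remembering that $E^\perp=\{0\}$ so that the term $r_{\Delta_L}(E^\perp)$ vanishes. No property of $q$-$\Delta$-matroids beyond the already established Proposition \ref{extremes} is needed.
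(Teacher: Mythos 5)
Your proposal is correct and follows essentially the same route as the paper: both proofs evaluate $\rho_\Delta$ at $E$ and at $\{0\}$ directly from Definition \ref{defrank}, reduce the minimization to $\max_{F}\dim F$ and $\min_{F}\dim F$ respectively, and identify these with $r(\Delta_U)$ and $n-r(\Delta_L)=r(\Delta_L^*)$. Your version merely makes explicit the appeals to Proposition \ref{extremes} and to the dual rank formula that the paper leaves implicit.
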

\begin{proof}
For the first half of the statement, we have
\begin{align*}
\rho_D(E) &=n -\min \{n +\dim F -2(\dim F\cap E)\} \\
&= n -\min \{n -\dim F\} \\
&= \max\{dim  F\} \\
& =r(\Delta_U).
\end{align*}
For the second half, we observe
\begin{align*}
\rho_D(\{0\}) &=n -\min \{0 +\dim F -2(\dim F\cap \emptyset)\} \\
&= n -\min \{\dim F\} \\
&= n - r(\Delta_L) \\
& =r(\Delta_L^*). \qedhere
\end{align*}
\end{proof}

In analogue with \cite{B2} we also have an alternative notion of rank in a $q$-$\Delta$-matroid.
 
\begin{definition} \label{defbirank}
Let $X,Y$ be orthogonal subspaces of $E$.
We define, for all such pairs of orthogonal spaces, its birank:
\[ \rho_b(X,Y)= \max \{\dim (F \cap X) + \dim (F^{\perp} \cap Y) : F \in \mathcal{F}\}. \]
\end{definition}

As one sees $X \in \mathcal{F}$ if and only $\rho_b(X,X^{\perp})=n$.

In \cite[Proposition 6.1]{B2} and its corollaries  one uses the corresponding birank for usual $\Delta$-matroids to give results for the rank functions of the upper and lower matroid for a $\Delta$-matroid.
We have not been able to understand enough of the arguments in \cite{B2} to be able to prove the corresponding results for $q$-$\Delta$-matroids.

\section*{Acknowledgement}

We are grateful to Steven D. Noble for help,  and useful answers, to questions about $\Delta$-matroids and $g$-matroids.

Trygve Johnsen was partially supported by the project Pure Mathematics in Norway, funded by the Trond Mohn Foundation, and also by the UiT Aurora project MASCOT. He also thanks the Department of Mathematics, IIT-Bombay, where he was a guest when a lot of his work with this article was done.

Michela Ceria works within the italian G.N.S.A.G.A. (Gruppo Nazionale per le Strutture Algebriche, Geometriche e le loro Applicazioni)

This work was supported  by the Italian Ministry of University and Research under the Programme “Department of Excellence” Legge 232/2016 (Grant No. CUP - D93C23000100001).

\end{document}